\documentclass[11pt,english,oneside]{amsart}

\usepackage[breaklinks,colorlinks,linkcolor=blue,citecolor=red,urlcolor=black]{hyperref}
\usepackage{enumerate,empheq,geometry,amsmath,graphicx}
\usepackage[utf8]{inputenc}

\theoremstyle{plain}
\newtheorem{theorem}{Theorem}[section]

\newtheorem{proposition}[theorem]{Proposition}
\newtheorem{remark}[theorem]{Remark}

\newtheorem{lemma}[theorem]{Lemma}
\newtheorem{definition}[theorem]{Definition}
\newcommand{\dd}{\mathrm{d}}

\title[Minimal hypersurfaces in spheres generated by isoparametric foliations
]
{Minimal hypersurfaces in spheres generated by isoparametric foliations
}
\author{Junqi Lai and  Guoxin Wei}
\address{Junqi Lai \\  School of Mathematical Sciences, South China Normal University,
510631, Guangzhou,  China, 2019021668@m.scnu.edu.cn}
\address{Guoxin Wei \\  School of Mathematical Sciences, South China Normal University,
510631, Guangzhou,  China, weiguoxin@tsinghua.org.cn}

\begin{document}

	\begin{abstract}
    We investigate the existence of minimal hypersurfaces in $\mathbb{S}^{n+1}$ that are generated by the isoparametric foliation of a subsphere $\mathbb{S}^n$. By considering a generalized rotational ansatz formed by the union of homothetic copies of isoparametric leaves, we reduce the minimal surface equation to an ordinary differential equation. We prove that this construction yields a closed embedded minimal hypersurface for any choice of isoparametric hypersurface $M \subset \mathbb{S}^n$. The resulting hypersurfaces have the topological type $S^1 \times M$, extending the known examples of minimal hypertori ($S^1\times S^k\times S^k$ and $S^1\times S^k\times S^l$) to a broader class of topologies determined by isoparametric structures.
	\end{abstract}
	\maketitle
	
	\section{Introduction}
The study of minimal hypersurfaces in the unit sphere $\mathbb{S}^{n+1}$ has long been a central topic in differential geometry. Since the foundational works of Lawson \cite{Lawson1970} and Chern, do Carmo, and Kobayashi \cite{Chern1970}, understanding the classification, topology, and construction of closed minimal hypersurfaces has attracted significant attention. While the equator and the Clifford tori provide the simplest examples, finding new embedded minimal hypersurfaces with non-trivial topology remains a challenging and fruitful area of research.

A particularly rich source of geometric structures in spheres comes from the theory of isoparametric hypersurfaces. Initiated by Cartan and structurally clarified by M\"unzner \cite{Munzner1980}, this theory describes hypersurfaces with constant principal curvatures. The level sets of the defining isoparametric polynomial form a singular Riemannian foliation of $\mathbb{S}^n$. The geometry of these leaves is well-understood and highly symmetric, making them ideal candidates for constructing complex submanifolds in higher dimensions.

In this paper, we explore the connection between these two classical subjects: isoparametric foliations in $\mathbb{S}^n$ and minimal hypersurfaces in $\mathbb{S}^{n+1}$. A natural question is whether one can "lift" the geometry of an isoparametric hypersurface $M \subset \mathbb{S}^n$ to construct a minimal hypersurface in the higher-dimensional sphere $\mathbb{S}^{n+1}$. Specifically, we investigate a construction based on the union of homothetic copies of the isoparametric leaves. This can be viewed as a generalization of the rotational surface construction.

Our main contribution is to show that such a construction is always possible for any isoparametric hypersurface, yielding a new family of minimal hypersurfaces with the topological type of a product $S^1 \times M$. The main result is stated as follows:

\begin{theorem}\label{MainTheorem}
    For any isoparametric hypersurface \( M \) in \( \mathbb{S}^n\), there is a closed embedded minimal hypersurface of topological type \( S^1 \times M \) in \( \mathbb{S}^{n+1}\). This hypersurface is a union of homothetic copies of the leaves of the isoparametric foliation of \( \mathbb{S}^n\) associated to \( M \).
\end{theorem}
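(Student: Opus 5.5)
We parametrize the candidate hypersurface as follows. Let $F$ be the Cartan--Münzner polynomial of degree $g$ on $\mathbb{R}^{n+1}$, and let $\{M_t\}_{t\in(0,\pi/g)}$ be the isoparametric foliation of $\mathbb{S}^n$. Here $M_t$ is the parallel hypersurface at distance $t$ from the focal submanifold $M_+$, and $M=M_{t_0}$ for some $t_0$. Write $\mathbb{S}^{n+1}=\{(x\cos r, \sin r): x\in\mathbb{S}^n,\ r\in[-\pi/2,\pi/2]\}\subset\mathbb{R}^{n+1}\times\mathbb{R}$. We consider hypersurfaces
\[
\Sigma_\gamma=\{(x\cos r(s),\sin r(s)) : x\in M_{t(s)}\},
\]
where $\gamma(s)=(t(s),r(s))$ is a curve in the orbit space $Q=[0,\pi/g]\times[-\pi/2,\pi/2]$. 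The metric on $Q$ is $\cos^2 r\,dt^2+dr^2$. The leaf over a point $(t,r)$ is a homothetic copy of $M_t$, scaled by $\cos r$, with $(n-1)$-volume $\cos^{n-1} r\, V(t)$. Here $V(t)=c\,\sin^{m_1}(gt)\ldots$; more precisely, the standard formula is $V(t)=c\,\sin^{m_+}(g t/2)\cos^{m_-}(gt/2)$ after normalization, with multiplicities $m_\pm$.

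\emph{Reduction to a geodesic problem.} By the symmetric-criticality principle (Hsiang--Lawson reduction), which applies because the foliation is homogeneous-like in the sense that every leaf is equifocal with volume depending only on $(t,r)$, $\Sigma_\gamma$ is minimal if and only if $\gamma$ is a geodesic of the conformal metric
\[
\tilde g=\bigl(\cos^{n-1}r\,V(t)\bigr)^{2}\,(\cos^2 r\,dt^2+dr^2)
\]
on the interior of $Q$. I would verify this directly rather than cite it. The mean curvature of $\Sigma_\gamma$ splits into the geodesic curvature of $\gamma$ in $Q$ plus the normal component of the mean curvature vector of the leaves. The leaves' mean curvature equals $-\nabla\log(\cos^{n-1}r\,V(t))$ projected, because the principal curvatures of $M_t$ are constant and the leaves are umbilically scaled in the $r$-direction. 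This gives an explicit second-order ODE for the angle $\theta(s)$ of $\gamma$ as a function of arc length. It is the analogue of the Otsuki/Hsiang ODE.

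\emph{Producing a closed embedded curve.} We need a closed geodesic $\gamma$ in the interior of $Q$ that is a simple loop. Then $\Sigma_\gamma\cong S^1\times M$, since all interior leaves are diffeomorphic to $M$, and $\Sigma_\gamma$ is embedded because distinct points of $Q$ carry disjoint leaves. The metric $\tilde g$ degenerates on $\partial Q$: $V$ vanishes at $t=0,\pi/g$, and $\cos r$ vanishes at $r=\pm\pi/2$. There is a unique interior critical point $p_0=(t_*,0)$ of the conformal factor, where $t_*$ is the minimal-leaf parameter, with $V'(t_*)=0$. This point corresponds to the cone over the minimal isoparametric hypersurface, i.e. the equatorial $\mathbb{S}^n$ piece. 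Both segments $\{r=0\}$ and $\{t=t_*\}$ are geodesics by reflection symmetry ($r\mapsto -r$, and $t\mapsto \pi/g - t$ composed with $m_+\leftrightarrow m_-$ if the multiplicities agree, though not in general). I plan a shooting argument in the style of Hsiang's and Carlotto--Schulz's constructions of minimal hypertori. Shoot geodesics $\gamma_a$ orthogonally from the segment $\{r=0\}$ at the point $(a,0)$, $a\in(0,t_*)$. Each $\gamma_a$ is symmetric under $r\mapsto -r$. It therefore closes into an embedded loop if it hits $\{r=0\}$ again orthogonally at some $(b,0)$ with $b>t_*$ after staying in $r>0$; the reflected curve then completes it. To arrange this, also consider the first return data to $r=0$, or better the first hitting of a second symmetry line.

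Since a second reflection line in $t$ is not available in general, I will instead use a continuity argument on the angle at which $\gamma_a$ first returns to $\{r=0\}$. As $a\to 0$, a blow-up and comparison with the boundary behaviour shows that $\gamma_a$ is pushed off and returns with angle of one sign. As $a\to t_*$, linearization at $p_0$ gives the Jacobi equation for the degenerate-free interior point. That equation has rotation controlled by the Jacobi operator eigenvalue $n+|A|^2$, with $|A|^2=(n-1)$ for minimal isoparametric hypersurfaces, so the linearized solution rotates by more than $\pi$ before returning; this gives the opposite sign. The intermediate value theorem yields an orthogonal return, hence a closed embedded geodesic. The main obstacle is this shooting step. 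I must show that $\gamma_a$ actually returns to $r=0$ without escaping to $\partial Q$ or self-intersecting, and I must obtain the boundary asymptotics near $t=0$ (where the focal submanifold has codimension $m_++1$) uniformly. I would first try to handle the escape issue using the first integral (Clairaut-type quantity) that is absent here. Failing that, I would use barrier curves built from explicit minimal cones, namely the cones over the focal submanifolds and over the minimal leaf, to confine $\gamma_a$.
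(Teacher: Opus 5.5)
\paragraph{Verdict.} Your framework is the paper's, but the shooting step, which carries the whole proof, has a genuine gap. You use the same ansatz and the same conformal metric: the paper's $\sin^{2n-2}r\,\sin^{2m_1}(\frac{g}{2}\theta)\cos^{2m_2}(\frac{g}{2}\theta)(\sin^2 r\,d\theta^2+dr^2)$ after $r\mapsto\frac{\pi}{2}-r$. You also follow the same plan: shoot orthogonally off the equatorial line and close up with the reflection $\hat\gamma$ of Proposition~\ref{SymmetryOfSystemOfCMC}.

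\paragraph{The end $a\to t_*$.} Linearizing along $\{t=t_*\}$ controls $\gamma_a$ only on compact subsets of $[0,\frac{\pi}{2})$. These shots follow the suspension of the minimal leaf up toward the pole, where the metric degenerates, so the Jacobi equation says nothing about how, or whether, they come back to $\{r=0\}$.

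The rotation you claim also fails in general. In your coordinates the linearized equation is $(\cos^{n+1}r\,w')'+g(n-1)\cos^{n-1}r\,w=0$. Here $g(n-1)=(n-1)+|A|^2$, because the minimal leaf has $|A|^2=(g-1)(n-1)$, not $n-1$. For $g=1$, $\tan r$ is a solution (it comes from rotating the great sphere $\{t=t_*\}$). The Wronskian with the solution satisfying $w(0)=1$, $w'(0)=0$ gives $\tan r\,w'=\sec^2 r\,(w-\sec^{n-1}r)$. Since $w<1\le\sec^{n-1}r$ as long as $w$ decreases, a continuity argument forces $w'<0$ on all of $(0,\frac{\pi}{2})$. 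So at the linear level the shot never turns in $t$.

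The paper uses linearization only for $g=4$, via Leighton's comparison with a Legendre-type equation. For every $g$ it proves the needed behaviour nonlinearly near the pole, in the variable $\xi=\frac{g}{2}\ln\tan\frac{r}{2}$ (Lemmas~\ref{lem:2025-12-04-2}--\ref{lem:2025-12-04-5}):
\begin{itemize}
\item the orbit from $\vartheta^*-\varepsilon$ runs out to $\xi\to\infty$;
\item where $d\vartheta/d\xi=1$ one has $\vartheta\ge\vartheta^*$;
\item where $\xi'=0$ one has $\vartheta_2\ge\vartheta^*+c_0$;
\item a return with $\vartheta$ still monotone would be a graph with $\frac{d^2\vartheta/d\xi^2}{1+(d\vartheta/d\xi)^2}<-b$ over a $\xi$-interval of length $\xi_2\to\infty$, which is impossible.
\end{itemize}
So the correct statement near $t_*$ is not ``a return with the opposite angle'' but ``$\vartheta'$ vanishes before any return''.

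\paragraph{The first-return-angle argument.} An intermediate-value argument on the first-return angle is the wrong device. It presupposes that a first return exists and varies continuously in $a$. You leave escape to $\partial Q$ and self-intersection open. Near $t_*$ the shots turn in $\vartheta$ before returning, and their later behaviour carries no sign information.

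The paper instead orders events. A value $\delta$ is type 1 if the orbit reaches $\xi=0$ while $\vartheta$ is still strictly monotone, type 2 if $\vartheta'$ vanishes while $\xi\neq0$, and type 3 otherwise. Types 1 and 2 are open conditions, thanks to the nondegeneracy in Lemma~\ref{lem:2025-10-03-1}. Let $\delta^*$ be the endpoint of the initial interval of type-1 values; this interval is nonempty by Section~\ref{sec5} and ends below $\vartheta^*$ by Section~\ref{sec6}. At $\delta^*$ both events must happen simultaneously unless $\delta^*$ is type 3. Section~\ref{sec7} excludes type 3: a type-3 orbit tends to $\vartheta=\frac{\pi}{2}$ with $\xi$ bounded, so nearby type-1 orbits would have to come back through a region where $G<-M$. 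The outcome (Proposition~\ref{prop:2025-10-03-2}) is an orthogonal return along an arc on which $\vartheta$ is strictly monotone, which also gives embeddedness for free.

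\paragraph{The small-$a$ end.} This end is only asserted. The blow-up limit controls $\gamma_a$ only until it runs nearly parallel to the equator close to the focal boundary. You would still need to show that it then turns back toward the equator ($\xi'=0$) before reaching $\vartheta^*$, and returns to $\xi=0$ without turning in $\vartheta$. That requires the quantitative estimates near the focal boundary in Lemmas~\ref{lem:2025-09-17-1} and \ref{lem:2025-10-02-3}, combined with Lemmas~\ref{lem:2025-10-02-1} and \ref{lem:2025-10-02-2}.
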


The proof is inspired by the ideas in~\cite{R}.
Geometrically, this hypersurface fibers over a closed curve in the parameter space, with fibers being the leaves of the isoparametric foliation. This construction provides a uniform way to produce minimal hypersurfaces in $\mathbb{S}^{n+1}$ reflecting the complexity of the isoparametric theory in $\mathbb{S}^n$.

\begin{remark}
	In the case where the isoparametric hypersurface \(M\) is \(S^k\times S^k\), our main theorem recovers Theorem 1.1 of \cite{CarS}, which was proved using ODE method.
	For \(M=S^k\times S^l\), it recovers that of \cite{FT}, which was established by doubling the links of free-boundary minimal cones in Euclidean space with bi-orthogonal symmetry.
	For more recent development of construction of minimal and constant mean curvature hypersurfaces in spheres, one can see \cite{HW,LW,P} (using ODE method) and \cite{WWZ,WZ} (using min-max constructions).
\end{remark}


	\section{Preliminaries}\label{prel}
	Let \(M^{n-1}\) be an isoparametric hypersurface in the unit sphere \(\mathbb{S}^n \subset \mathbb{R}^{n+1}\), and let \(N\) be a unit normal vector field to \(M\) in \(\mathbb{S}^n\). It is well known that the principal curvatures of \(M\) are constant. Let \(g\) denote the number of distinct principal curvatures of \(M\).
	A celebrated result by M\"unzner states that \(g\) can only take values in $\left\{ 1, 2, 3, 4, 6\right\}$.
	Let \(\cot \varphi_1 \ge \cot \varphi_2 \ge \dots \ge \cot \varphi_{n-1}\) be the principal curvatures of \(M\), where \(\varphi_1, \dots, \varphi_{n-1} \in (0,\pi)\).

Let \(\gamma(s) = (x(s), y(s), z(s)) = (\sin r(s) \cos \varphi(s), \sin r(s) \sin \varphi(s), \cos r(s))\), \(s \in I\), be a smooth curve parametrized by arc length in the two-dimensional unit sphere, where
\[(r(s), \varphi(s)) \in (0,\pi) \times (\varphi_1 - \frac{\pi}{g}, \varphi_1).\]
We define an immersion \(F\) from \(M \times I\) into \(\mathbb{S}^{n+1}\) as follows:
\begin{equation}\label{DefinitionOfF}
\begin{aligned}
F: M \times I \quad &\to \quad \mathbb{S}^{n+1}, \\
(p, s) \quad &\mapsto \quad (p\,x(s) + N(p)\,y(s), \; z(s)).
\end{aligned}
\end{equation}
One can verify that
\[
\nu(p, s) = ((yz' - y'z)\,p + (x'z - xz')\,N, \; xy' - x'y)
\]
is a unit normal vector field to \(F\), and that the principal curvatures of \(F\) are given by
\begin{equation}\label{eq:2025-12-02-11}
\begin{aligned}
&\kappa_i = \csc r \, \cot(\varphi_i - \varphi) \cos \alpha + \cot r \, \sin \alpha, \quad i = 1, \cdots, n-1,\\
&\kappa_n = \alpha' + \cot r \, \sin \alpha,
\end{aligned}
\end{equation}
where \(\alpha(s)\) denotes the angle between the tangent vector of the curve \(\gamma(s)\) and \(\frac{\partial}{\partial r}\).
For \(g=2\) and the case where the two multiplicities of principal curvature are equal, the computation can be found in \cite{HW}.
\begin{remark}
When \(g = 1\), the hypersurface constructed here is precisely a rotation hypersurface in the sphere. Indeed, taking \(M = \mathbb{S}^{n-1} := \mathbb{S}^n \cap \{ x_{n+1} = 0 \}\) and \(N = (0, \cdots, 0, 1)\), a point \(p \in \mathbb{S}^{n-1}\) can be written as \((\bar p, 0)\) with \(|\bar p| = 1\). In this case,
\((p\,x(s) + N(p)\,y(s), \; z(s)) = (\bar p\, x(s), y(s), z(s)).\)
This is exactly the standard immersion form for a rotation hypersurface in the unit sphere; see, e.g., \cite{O, DD, P1}.
\end{remark}
\begin{remark}
Similarly, one can define such immersions into hyperbolic space:
\begin{align*}
F_{-1}: M \times I \quad &\to \quad \mathbb{H}^{n+1} \\
(p, s) \quad &\mapsto \quad (p \sinh r(s) \cos \varphi(s) + N(p) \sinh r(s) \sin \varphi(s), \; \cosh r(s)).
\end{align*}
Here, hyperbolic space is viewed as the Lorentz model, i.e.,
\[
\mathbb{H}^n = \left\{ x \in \mathbb{R}^{n+2}_1 \mid \langle x, x \rangle_1 := x_1^2 + \cdots + x_{n+1}^2 - x_{n+2}^2 = -1 \right\}.
\]
Denote \(x(s) = \sinh r \cos \varphi\), \(y(s) = \sinh r \sin \varphi\), \(z(s) = \cosh r\).
If \(\gamma(s) = (r(s), \varphi(s))\) is a curve parametrized by arc length with respect to the hyperbolic plane metric (i.e., \(\dd r^2 + \sinh^2 r \, \dd\varphi^2\)), then a unit normal vector field \(\nu(p, s)\) to this immersion is given by:
\[
\nu(p, s) = \left( (z y' - z' y) \, p + (z' x - z x') \, N, \; x y' - x' y \right),
\]
Using the angle \(\alpha\) (defined as the angle between the tangent vector of the curve \(\gamma(s)\) and the radial vector field \(\frac{\partial}{\partial r}\)), the principal curvatures of \(F_{-1}\) can be expressed as:
\begin{align*}
&\kappa_i = \operatorname{csch} r \, \cot(\varphi_i - \varphi) \cos \alpha + \coth r \, \sin \alpha, \quad i = 1, \cdots, n-1,\\
&\kappa_n = \alpha' + \coth r \, \sin \alpha,
\end{align*}
\end{remark}

\begin{remark}
More generally, one can define such immersions into warped products with the unit sphere:
\begin{align*}
F_{h}: M \times I \quad &\to \quad \tilde{I} \times_h \mathbb{S}^n \\
(p, s) \quad &\mapsto \quad (r(s), \; p \cos \varphi(s) + N(p) \sin \varphi(s)).
\end{align*}
The principal curvatures of this immersion \(F_h\) are given by:
\begin{equation*}
\begin{aligned}
&\kappa_i = \frac{1}{h(r(s))} \cot(\varphi_i - \varphi(s)) \cos \alpha + \frac{h'(r(s))}{h(r(s))} \sin \alpha, \quad i = 1, \cdots, n-1,\\
&\kappa_n = \alpha' + \frac{h'(r(s))}{h(r(s))} \sin \alpha,
\end{aligned}
\end{equation*}
where \(\alpha(s)\) is the angle between the tangent vector of the arc length parametrized curve \(\gamma(s) = (r(s), \varphi(s))\) and \(\frac{\partial}{\partial r}\) with respect to the metric \(\dd r^2 + h(r)^2 \dd\varphi^2\).
\end{remark}

\begin{remark}
Using the Clifford foliation of the unit sphere \(\mathbb{S}^{k+l+1}\), one can also define a class of immersions as follows:
\begin{align*}
F_{\text{c}}: I \times {S}^k \times M \quad &\to \quad \mathbb{S}^{k+l+1} \\
(s, p, q) \quad &\mapsto \quad \left( p \cos r(s), \; (q \cos \varphi(s) + N(q) \sin \varphi(s)) \sin r(s) \right),
\end{align*}
where \(M\) is an isoparametric hypersurface in \(\mathbb{S}^l\).
If \(\gamma(s) = (r(s), \varphi(s))\) is a curve parametrized by arc length with respect to the metric \(\dd r^2 + \sin r^2 \dd\varphi^2\), and \(\alpha(s)\) denotes the angle between the tangent vector of this curve and \(\frac{\partial}{\partial r}\), then a unit normal vector field \(\nu\) to this immersion can be taken as
\[
\nu(s, p, q) = \left( p \sin r \sin \alpha, \; (-\Psi \cos r \sin \alpha + A \cos \alpha) \right),
\]
where \(\Psi = q \cos \varphi + N \sin \varphi\) and \(A = -q \sin \varphi + N \cos \varphi\).
The principal curvatures of the hypersurface \(F_{\text{c}}\) fall into three families:
\begin{itemize}
    \item Principal curvatures corresponding to \(\mathbb{S}^k\) (multiplicity \(k\)):
    \[
    \mu = -\tan r \, \sin \alpha.
    \]
    \item Principal curvatures corresponding to \(M\):
    \[
    \lambda_i = \csc r \, \cot(\varphi_i - \varphi) \cos \alpha + \cot r \, \sin \alpha, \quad i = 1, \dots, l-1.
    \]
    \item Principal curvature corresponding to the profile curve direction (multiplicity \(1\)):
    \[
    \kappa = \alpha' + \cot r \, \sin \alpha.
    \]
\end{itemize}
\end{remark}
It is well known that all distinct principal curvatures of the isoparametric hypersurface \(M\) take the form
\[
\cot \varphi_1,\;\cot\!\left(\varphi_1+\frac{\pi}{g}\right),\;\dots,\;\cot\!\left(\varphi_1+\frac{(g-1)\pi}{g}\right),
\]
and their multiplicities alternate in this ordering. Furthermore, when \(g\) is odd, all multiplicities are equal.
Let \(m_2\) denote the multiplicity of the largest principal curvature of \(M\), and \(m_1\) the multiplicity of the second largest (if \(g=1\), we set \(m_1=m_2\)).
Recalling the the principal curvatures of \(F\):
\begin{equation}\tag{2}
\begin{aligned}
&\kappa_i = \csc r \, \cot(\varphi_i - \varphi) \cos \alpha + \cot r \, \sin \alpha, \quad i = 1, \cdots, n-1,\\
&\kappa_n = \alpha' + \cot r \, \sin \alpha.
\end{aligned}
\end{equation}
Using the summation form of the multiple-angle cotangent formula, one finds that the mean curvature of \(F\) is given by
\begin{align*}
	H&=\alpha'+n \cot r \sin \alpha+\frac{g}{2}\left[ m_2 \cot \left( \frac{g}{2}(\varphi_1-\varphi) \right)-m_1 \tan \left( \frac{g}{2}(\varphi_1-\varphi) \right) \right]\csc r \cos \alpha \\
	&=\alpha'+n \cot r \sin \alpha+\frac{g}{2}\left[ m_2 \tan \left( \frac{g}{2} \theta\right)-m_1 \cot \left( \frac{g}{2}\theta \right) \right]\csc r \cos \alpha,
\end{align*}
where \(\theta=\frac{\pi}{g}-\varphi_1+\varphi\).
Combining the fact that \(\gamma(s)\) is parametrized by arc-length, we know that \(F\) has constant mean curvature \(H\) if and only if \(r,\,\theta,\,\alpha\) satisfy the following ODE
\begin{equation}\label{Initial}
	\begin{aligned}
		r'&= \cos \alpha,\qquad\theta'= \frac{\sin \alpha}{\sin r},\\
		\alpha' &= - n \cot r \sin \alpha+ \frac{g }{2}\left[ m_1 \cot \left( \frac{g }{2}\theta\right)-m_2 \tan \left( \frac{g }{2} \theta\right)  \right]\csc r \cos \alpha +H.
	\end{aligned}
\end{equation}
\begin{remark}
When \(H=0\), the above system of ordinary differential equations is equivalent to the system consisting of \((r')^2+(\theta')^2 \sin^2 r=1\) and
\begin{equation}\nonumber
(r'\theta''-r''\theta')\sin r+(r')^2 \theta'\cos r+n\theta'\cos r-\frac{g}{2}\left(m_1\cot\left(\frac{g}{2}\theta\right)-m_2\tan\left(\frac{g}{2}\theta\right)\right)r'\csc r=0.
\end{equation}
Direct computation shows that a curve \((r(s),\theta(s))\) satisfies this system if and only if its trajectory is a geodesic trajectory under the following conformal metric (see \cite{CarS}):
\[
\sin^{2n-2} r\,\sin^{2m_1}\!\left( \frac{g }{2}\theta \right)\cos^{2m_2}\!\left( \frac{g }{2}\theta \right)\bigl(\sin^2r\, \dd\theta^2+\dd r^2\bigr).
\]
Thus, proving theorem~\ref{MainTheorem} essentially reduces to establishing the existence of a simple closed geodesic under this metric. The present paper employs the shooting method to prove its existence; another possible approach might be the (modified) curve shortening flow, see \cite{DN}.

It is particularly noteworthy that Hsiang \cite{Hs3}, in his study of the Bernstein problem on the sphere, searched for a {\bf special class of  solution curves}: these connect the two singular boundaries of the quotient space (namely \(\left\{ \theta=0\right\}\) and \(\left\{ \theta={\pi}/{g}\right\}\)) and satisfy {\bf orthogonality conditions} at the boundaries. This strict boundary behavior ensures that, when lifted back to the original space, the curve generates a smooth closed hypersurface topologically equivalent to a sphere. Hsiang's breakthrough lay in discovering that, besides the obvious equatorial solution, there exist non?trivial solution curves satisfying the above boundary conditions, thereby constructing embedded minimal spheres that are not totally geodesic, see also \cite{Hs4, Hs5,Tomter1987}.
\end{remark}

	After a reparametrization, one sees that equation~\eqref{Initial} is equivalent to
	\begin{equation*}
		\begin{aligned}
			r' &= \sin r \cos \alpha,\qquad			\theta' = \sin \alpha,\\
			\alpha' &= -n \cos r \sin \alpha + \frac{g }{2}\left[ m_1 \cot \left( \frac{g }{2}\theta\right)-m_2 \tan \left( \frac{g }{2} \theta\right)  \right] \cos \alpha + H \sin r.
		\end{aligned}
	\end{equation*}
	Let \(\xi = \frac{g}{2}\ln \tan \frac{r}{2}\) and \(\vartheta= \frac{g }{2}\theta\), then \(\cos r= -\tanh \frac{2}{g}\xi,\, \sin r = \text{sech}\,\frac{2}{g}\xi\) and \(\xi\), \(\vartheta\), \(\alpha\) satisfy the following ODE
	\begin{align*}
			\xi' &= \frac{g }{2} \cos \alpha,\qquad
			\vartheta' = \frac{g }{2} \sin \alpha,\\
			\alpha' &= \frac{g }{2}\left[ \frac{2n}{g}\tanh \frac{2}{g}\xi \sin \alpha + \left( m_1 \cot \vartheta-m_2 \tan \vartheta \right)\cos \alpha + \frac{2}{g} H\,\text{sech}\,\frac{2}{g}\xi \right],
	\end{align*}
	which is equivalent to
	\begin{equation}\label{SystemOfCMC}
		\left\{ \begin{aligned}
			\xi' &= \sin 2 \vartheta \cos \alpha,\\
			\vartheta' &= \sin 2 \vartheta \sin \alpha,\\
			\alpha' &= m \sin 2 \vartheta \tanh \frac{2}{g}\xi \sin \alpha + 2(m_1 \cos ^2 \vartheta - m_2 \sin^2 \vartheta)\cos \alpha +\frac{2}{g}H \sin 2 \vartheta\,\text{sech}\,\frac{2}{g}\xi,
		\end{aligned}\right.
	\end{equation}
	where \(m:= \frac{2}{g}n\).
	Let \(D=\mathbb{R}\times (0,\frac{\pi}{2})\times \mathbb{R}\).
	In the present paper, we will focus solely on solutions of equation~\eqref{SystemOfCMC} that lie in \(D\).

	When \(H=0\) and \((\xi(t),\vartheta(t))\) is of the form \((\xi,\vartheta(\xi))\), then \(\vartheta(\xi)\) satisfies:
	\begin{equation}\label{eq:2025-07-26-1}
		\frac{\dd^2 \vartheta}{\dd \xi^2}=\left[ 1+\left( \frac{\dd \vartheta}{\dd \xi} \right)^2 \right]\left( m \frac{\dd \vartheta}{\dd \xi}\tanh \frac{2}{g}\xi +  m_1 \cot \vartheta - m_2 \tan \vartheta  \right).
	\end{equation}
	When \(H=0\) and \((\xi(t),\vartheta(t))\) is of the form \((\xi(\vartheta),\vartheta)\), then \(\xi(\vartheta)\) satisfies:
	\begin{equation}\label{Xi}
		\frac{\dd^2 \xi}{\dd \vartheta^2} = -\left[ 1+ \left( \frac{\dd \xi}{\dd \vartheta} \right)^2 \right] \left[ m \tanh \frac{2}{g}\xi +\left( m_1 \cot \vartheta - m_2 \tan \vartheta \right) \frac{\dd \xi}{\dd \vartheta} \right].
	\end{equation}
	\begin{remark}\noindent
		\begin{itemize}
			\item If \(m_1 \cos^2 \vartheta-m_2 \sin^2 \vartheta=0\) then \(\vartheta= \arctan\sqrt{\frac{m_1}{m_2}}=:\vartheta^*\);
			\item The equation \eqref{eq:2025-07-26-1} has a unique constant solution \(\vartheta \equiv \vartheta^*\);
			\item \(m_1 \csc^2 \vartheta^*+ m_2 \sec^2 \vartheta^*=2(m_1+m_2)\);
		\end{itemize}
	\end{remark}
We conclude this section by two elementary propositions.
\begin{proposition}\label{ElemataryPropertyOfSystem}
For solutions of \eqref{SystemOfCMC}, we have:
\begin{enumerate}[{\rm (i)}]
    \item For any $(\xi_0,\vartheta_0,\alpha_0) \in \mathbb{R}^3$, there exists a unique solution $\gamma$ of \eqref{SystemOfCMC} satisfying the initial condition $\gamma(0)=(\xi_0,\vartheta_0,\alpha_0)$. This solution is smooth and its maximal interval of existence is the whole real line, i.e., the solution exists for all time.

    \item Suppose $(\xi_s,\vartheta_s,\alpha_s) $ converges to a point $(\xi_\infty,\vartheta_\infty,\alpha_\infty) $ in $\mathbb{R}^3$. Denote by $\gamma_s$ the solution of \eqref{SystemOfCMC} with initial data $(\xi_s,\vartheta_s,\alpha_s)$, and by $\gamma_\infty$ the solution with initial data $(\xi_\infty,\vartheta_\infty,\alpha_\infty)$. Then $\gamma_s$ converges to $\gamma_\infty$ uniformly on every compact subset of $\mathbb{R}$.

    \item A solution of \eqref{SystemOfCMC} whose initial data lie in $D$ remains in $D$ for all time.
\end{enumerate}
\end{proposition}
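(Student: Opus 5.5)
The vector field $X(\xi,\vartheta,\alpha)$ on the right-hand side of \eqref{SystemOfCMC} is built from products and sums of $\sin 2\vartheta$, $\cos^2\vartheta$, $\sin^2\vartheta$, $\sin\alpha$, $\cos\alpha$, $\tanh\frac{2}{g}\xi$ and $\operatorname{sech}\frac{2}{g}\xi$. All of these are real-analytic on $\mathbb{R}^3$, and they are bounded together with their first derivatives. So $X$ is smooth and globally Lipschitz on $\mathbb{R}^3$. This structure is what I will use for all three parts.

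For (i), Picard--Lindel\"of gives local existence and uniqueness, and smoothness of the solution follows from smoothness of $X$. For global existence there are two easy routes. First, $|\xi'|\le 1$ and $|\vartheta'|\le 1$. Second, $|\alpha'|\le m+2(m_1+m_2)+\frac{2}{g}|H|$ is bounded. Hence no component can blow up in finite time, and the maximal interval is $\mathbb{R}$. Alternatively, a bounded vector field, or a globally Lipschitz one, always yields complete flows.

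For (ii), I invoke continuous dependence on initial data. If $\gamma_s$ and $\gamma_\infty$ solve the system, Gronwall's inequality gives
\[
|\gamma_s(t)-\gamma_\infty(t)|\le |\gamma_s(0)-\gamma_\infty(0)|\,e^{L|t|},
\]
where $L$ is the global Lipschitz constant of $X$. This bound yields uniform convergence on every compact interval $[-T,T]$.

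For (iii), which is the only point needing an argument, I use the invariance of the planes $\{\vartheta=0\}$ and $\{\vartheta=\pi/2\}$. On $\vartheta=0$ we have $\sin 2\vartheta=0$, so the system reduces to $\xi'=0$, $\vartheta'=0$, $\alpha'=2m_1\cos\alpha$. This is an honest solution lying in the plane $\{\vartheta=0\}$: $\xi$ is constant, $\vartheta\equiv0$, and $\alpha$ solves a one-dimensional ODE, which has a global solution. The same holds on $\{\vartheta=\pi/2\}$ with $\alpha'=-2m_2\cos\alpha$. Now suppose a solution starting in $D$ reached $\vartheta(t_1)\in\{0,\pi/2\}$ at some first time $t_1$. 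By uniqueness from (i), it would coincide with the planar solution through $\gamma(t_1)$. That planar solution has $\vartheta$ constant at the boundary value for all times, which contradicts $\vartheta(0)\in(0,\pi/2)$. By continuity $\vartheta$ therefore stays in $(0,\pi/2)$ for all $t$, and $\xi$ and $\alpha$ are unconstrained, so the solution remains in $D$.

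An equivalent, more quantitative version uses $(\ln\tan\vartheta)'=2\sin\alpha$, which is bounded, so $\vartheta$ cannot reach $0$ or $\pi/2$ in finite time. I do not expect a real obstacle here; the main care point is noting that the boundary planes are invariant, which relies on the factor $\sin 2\vartheta$ in both the $\xi'$ and $\vartheta'$ equations.
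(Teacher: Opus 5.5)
Your proposal is correct and is exactly the standard argument the paper omits ("the proof is standard"). Parts (i) and (ii) follow because the right-hand side of \eqref{SystemOfCMC} is smooth, bounded and globally Lipschitz on $\mathbb{R}^3$. Part (iii) follows from invariance of the planes $\{\vartheta=0\}$ and $\{\vartheta=\pi/2\}$ together with uniqueness, or alternatively from your identity $(\ln\tan\vartheta)'=2\sin\alpha$. One cosmetic point: you need not take a \emph{first} hitting time $t_1$, since the uniqueness argument rules out $\vartheta(t_1)\in\{0,\pi/2\}$ at every $t_1\in\mathbb{R}$, negative times included.
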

\begin{proof}
	The proof is standard so we omit it.
\end{proof}
\begin{proposition}\label{SymmetryOfSystemOfCMC}
Let \(\gamma(t)=(\xi(t),\vartheta(t),\alpha(t))\) be a solution of \eqref{SystemOfCMC} in \(D\). Then
\begin{gather*}
    \hat\gamma(t)=(-\xi(-t),\;\vartheta(-t),\;-\alpha(-t)),\\[4pt]
    \tilde\gamma(t)=(\xi(-t),\;\frac{\pi}{2}-\vartheta(-t),\;\pi-\alpha(-t)) \qquad (\text{when } m_1=m_2),\\[4pt]
    \check{\gamma}(t)=(\xi(t),\;\vartheta(t),\;\alpha(t)+2k\pi), \quad \forall\, k \in \mathbb{Z}
\end{gather*}
are also solutions of \eqref{SystemOfCMC} in \(D\).
\end{proposition}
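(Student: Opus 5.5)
The plan is a direct substitution check: for each of the three curves, differentiate in $t$ using the chain rule, substitute the equations of \eqref{SystemOfCMC} evaluated at $-t$ (or at $t$), and rewrite the right-hand sides in terms of the new variables. The check is carried out for arbitrary $H$. The identities used are the parities of the coefficient functions: $\tanh$ is odd, $\mathrm{sech}$ is even, $\sin(-\alpha)=-\sin\alpha$, $\cos(-\alpha)=\cos\alpha$, $\sin(\pi-\alpha)=\sin\alpha$, $\cos(\pi-\alpha)=-\cos\alpha$ and $\sin(\pi-2\vartheta)=\sin2\vartheta$. Membership in $D$ is immediate in every case, since $\vartheta\mapsto\vartheta$ and $\vartheta\mapsto\frac{\pi}{2}-\vartheta$ both preserve $(0,\frac{\pi}{2})$ and the other coordinates are unrestricted.

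For $\hat\gamma$, write $\hat\xi(t)=-\xi(-t)$, $\hat\vartheta(t)=\vartheta(-t)$, $\hat\alpha(t)=-\alpha(-t)$; all unhatted quantities below are evaluated at $-t$.
\begin{itemize}
\item First equation: $\hat\xi'(t)=\xi'(-t)=\sin2\vartheta\cos\alpha=\sin2\hat\vartheta\cos\hat\alpha$.
\item Second equation: $\hat\vartheta'(t)=-\vartheta'(-t)=-\sin2\vartheta\sin\alpha=\sin2\hat\vartheta\sin\hat\alpha$.
\item Third equation: $\hat\alpha'(t)=\alpha'(-t)$, and we rewrite each of its three terms.
\begin{itemize}
\item The first term is unchanged because both factors flip sign: $\tanh\frac{2}{g}\xi\,\sin\alpha=\tanh(-\frac{2}{g}\hat\xi)\,(-\sin\hat\alpha)=\tanh\frac{2}{g}\hat\xi\,\sin\hat\alpha$.
\item The second term is unchanged since $\cos\alpha=\cos\hat\alpha$ and $\vartheta=\hat\vartheta$.
\item The $H$-term is unchanged since $\mathrm{sech}$ is even.
\end{itemize}
\end{itemize}
Hence $\hat\gamma$ solves \eqref{SystemOfCMC} with the same $H$.

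For $\tilde\gamma$ with $m_1=m_2$, write $\tilde\xi=\xi(-t)$, $\tilde\vartheta=\frac{\pi}{2}-\vartheta(-t)$, $\tilde\alpha=\pi-\alpha(-t)$, and note first that $\sin2\tilde\vartheta=\sin2\vartheta$.
\begin{itemize}
\item First equation: $\tilde\xi'=-\xi'(-t)=-\sin2\vartheta\cos\alpha=\sin2\tilde\vartheta\cos\tilde\alpha$.
\item Second equation: $\tilde\vartheta'=\vartheta'(-t)=\sin2\vartheta\sin\alpha=\sin2\tilde\vartheta\sin\tilde\alpha$.
\item Third equation: $\tilde\alpha'=\alpha'(-t)$, and again we rewrite its terms.
\begin{itemize}
\item The $\tanh$-term is unchanged, since $\tilde\xi=\xi$ and $\sin\tilde\alpha=\sin\alpha$.
\item The $H$-term is unchanged.
\item For the middle term, $m_1=m_2$ gives $2m_1(\cos^2\tilde\vartheta-\sin^2\tilde\vartheta)\cos\tilde\alpha=2m_1(\sin^2\vartheta-\cos^2\vartheta)(-\cos\alpha)$, which equals the original term. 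This is the only place where the hypothesis $m_1=m_2$ is needed.
\end{itemize}
\end{itemize}

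For $\check\gamma$, every right-hand side depends on $\alpha$ only through $\sin\alpha$ and $\cos\alpha$, and $\check\alpha'=\alpha'$, so the equations are invariant under $\alpha\mapsto\alpha+2k\pi$.

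There is no real obstacle here. The only points needing care are the sign bookkeeping in the $\tanh\cdot\sin\alpha$ term under $\hat\gamma$, and noticing that the $H\,\mathrm{sech}$ term survives both reflections. The latter holds because $\mathrm{sech}$ is even, $\sin2\vartheta$ is invariant under $\vartheta\mapsto\frac{\pi}{2}-\vartheta$, and under $\hat\gamma$ the factor $\xi$ enters only through $\mathrm{sech}$.
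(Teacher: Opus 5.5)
Your proposal is correct and takes the same route as the paper. The paper only states that the symmetries follow by a straightforward computation, and your term-by-term substitution supplies the omitted details. Your sign bookkeeping is right, in particular that the $H\,\mathrm{sech}$ term survives both reflections and that $m_1=m_2$ is needed only in the middle term of the $\alpha'$ equation for $\tilde\gamma$.
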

\begin{proof}
	The proof is a straightforward computation.
\end{proof}

	\section{Existence of periodic curves}
	Throughout this paper \(\arctan(\sqrt{m_1/m_2})\) will be denoted by \(\vartheta^*\).
	To prove theorem~\ref{MainTheorem}, it suffices to find solutions of \eqref{SystemOfCMC} (when \(H=0\)) such that the components \((\xi(t),\vartheta(t))\) form smooth simple closed curves.
	Let us begin by a definition.
	\begin{definition}\label{def:10-25-4}
	For $\delta \in (0,\frac{\pi}{2})$ let $(\xi_{\delta}(t),\vartheta_{\delta}(t),\alpha_{\delta}(t))$ denote the solution of \eqref{SystemOfCMC} with initial condition $(\xi_\delta(0),\vartheta_\delta(0),\alpha_\delta(0))=(0,\delta,0)$. Then:
	\begin{enumerate}[{\rm (i)}]
		\item $\delta$ is said to be type 1 if there is a $T>0$ so that $\xi_{\delta}(T)=0$ and $\vartheta_{\delta}'(t)\ne 0$ for all $t\in (0,T)$. See figure~\ref{TypesOftheProfileCurves}(a) for the schematic.
		\item $\delta$ is said to be type 2 if there is a $T>0$ so that $\vartheta_{\delta}'(T)= 0$ and $\xi_{\delta}(t)\ne0$ for all $t\in (0,T)$. See figure~\ref{TypesOftheProfileCurves}(b), (c) for the schematic.
		\item $\delta$ is said to be type 3 if $\vartheta_{\delta}'(t) \ne 0$ and $\xi_{\delta}(t)\ne0$ for all $t>0$. See figure~\ref{TypesOftheProfileCurves}(d) for the schematic.
	\end{enumerate}
\end{definition}
\noindent
\begin{figure}[htbp]
\centering

\newcommand{\uniformheight}{6cm}

\parbox{0.22\textwidth}{
  \centering
  \includegraphics[height=\uniformheight, width=\textwidth, keepaspectratio]{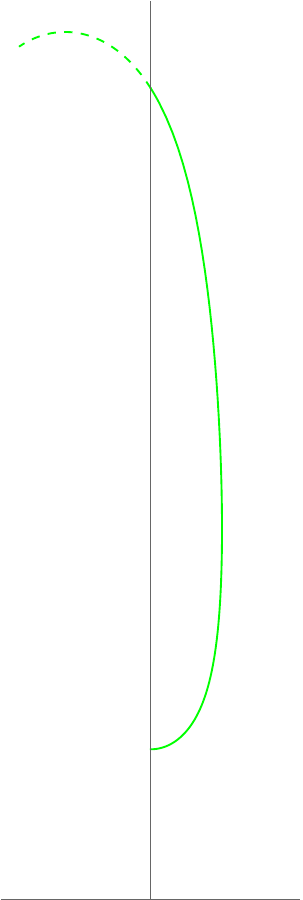}
  \\ \footnotesize{(a)\,Type~1}
}
\hfill
\parbox{0.22\textwidth}{
  \centering
  \includegraphics[height=\uniformheight, width=\textwidth, keepaspectratio]{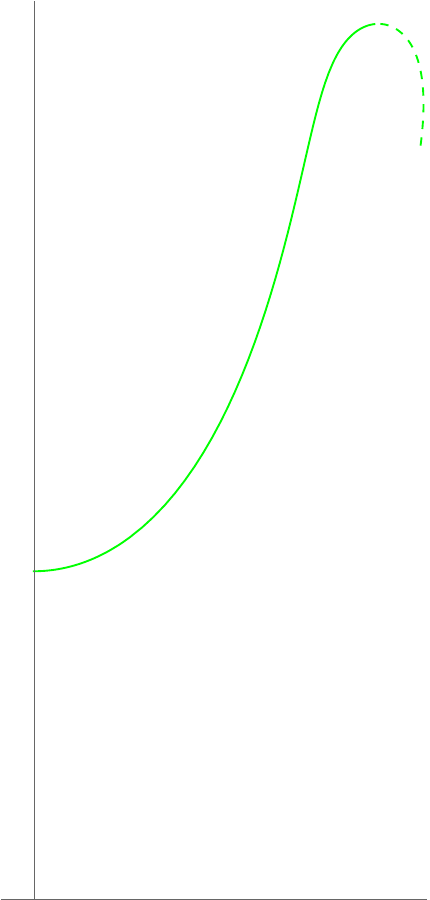}
  \\ \footnotesize{(b)\,Type~2}
}
\hfill
\parbox{0.22\textwidth}{
  \centering
  \includegraphics[height=\uniformheight, width=\textwidth, keepaspectratio]{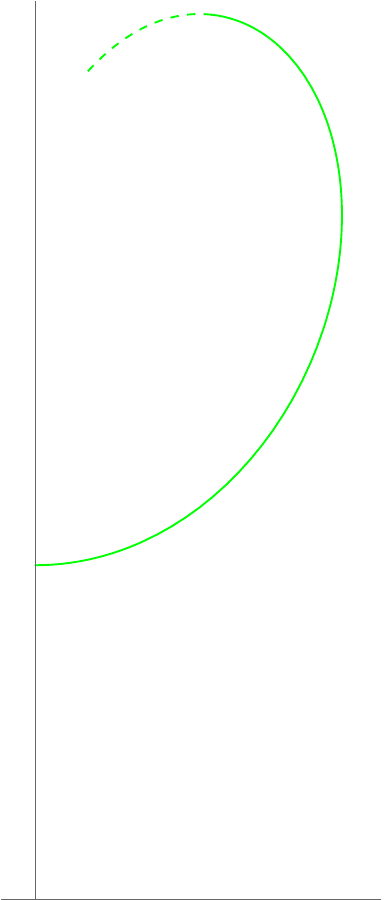}
  \\ \footnotesize{(c)\,Type~2}
}
\hfill
\parbox{0.22\textwidth}{
  \centering
  \includegraphics[height=\uniformheight, width=\textwidth, keepaspectratio]{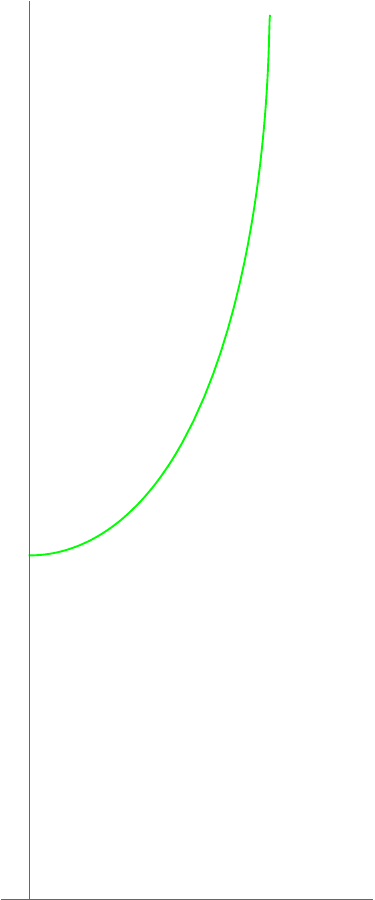}
  \\ \footnotesize{(d)\,Type~3}
}
\vskip2mm
\caption{Types of the profile curves.}
\label{TypesOftheProfileCurves}
\end{figure}
\vskip1mm
Note that these three situations cover all possibilities.
We next define
\[\delta^*:=\sup \left\{ \delta\in(0,\frac{\pi}{2})\mid \delta_0\text{ is type 1 for all }\delta_0\in(0,\delta)\right\}\]
with the convention that \(\delta^* = 0\) if the set is empty.
\begin{proposition}\label{prop:2025-10-03-1}
	Let \(H=0\), then we have:
	\begin{enumerate}[{\rm (i)}]
		\item $\delta^*>0$.
		\item $\delta^*<\vartheta^*$.
		\item $\delta^*$ is not type 3.
	\end{enumerate}
\end{proposition}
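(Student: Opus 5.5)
We prove the three parts separately, using only the system \eqref{SystemOfCMC} with $H=0$, its symmetries (Proposition~\ref{SymmetryOfSystemOfCMC}) and continuous dependence on initial data (Proposition~\ref{ElemataryPropertyOfSystem}).

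\emph{Part (i): small $\delta$ are type 1.} For small $\delta$ the trajectory starts near the singular boundary $\vartheta=0$. There $\alpha' \approx 2m_1\cos\alpha$, while $\xi'$ and $\vartheta'$ carry the factor $\sin2\vartheta\approx 2\delta$. So in the time variable $t$, $\alpha$ rises quickly towards $\pi/2$ and then on towards $\pi$, while the point $(\xi,\vartheta)$ has barely moved.

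To make this precise I would rescale by $\tau=\sin 2\vartheta\,dt$, which is arclength in the $(\xi,\vartheta)$-plane, and then blow up $\vartheta=\delta u$, $\xi=\delta v$. In the limit $\delta\to0$ the system becomes the geodesic equation for the conformal metric $u^{2m_1}(du^2+dv^2)$ near the boundary; the $\tanh$ term vanishes, since $\xi=O(\delta)$ on the relevant time scale. That limit problem is explicitly integrable: it is the profile of the minimal cone over $S^{m_1}$, i.e. a catenoid-type curve. Starting at $(0,1)$ horizontally it has $u'>0$ until it turns around, and it reaches $v=0$ again while $u$ is monotone.

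Note the symmetry $\hat\gamma$, which reflects $\xi\mapsto-\xi$. The limit curve is a graph over $u$ that crosses $v=0$ transversally, so by continuous dependence, for small $\delta$ the curve meets $\xi=0$ before $\vartheta'$ vanishes. Hence small $\delta$ are type 1. I regard the uniform control of this blow-up limit (in particular, passing to the limit in a singular ODE) as the main technical step of (i).

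\emph{Part (ii): $\delta$ close to $\vartheta^*$ is type 2.} Linearize around the constant solution $\vartheta\equiv\vartheta^*$ of \eqref{eq:2025-07-26-1}. Writing $\vartheta=\vartheta^*+w$ gives
\[
w''=m\tanh(\tfrac{2}{g}\xi)\,w'-2(m_1+m_2)\,w,
\]
using the remark $m_1\csc^2\vartheta^*+m_2\sec^2\vartheta^*=2(m_1+m_2)$. With $w(0)<0$ and $w'(0)=0$, the equation gives $w''(0)>0$, so $w$ increases. Near $\xi=0$ it oscillates with frequency $\sqrt{2(m_1+m_2)}$, which means $w'$ returns to zero at some $\xi\approx\pi/\sqrt{2(m_1+m_2)}$ before any blow-up occurs. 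Along the way $\xi$ stays positive, so no return to $\xi=0$ happens.

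By continuity this holds for all $\delta$ in some interval $(\vartheta^*-\epsilon,\vartheta^*)$. Hence these $\delta$ are type 2, so $\delta^*\le\vartheta^*-\epsilon<\vartheta^*$. Continuous dependence only needs to hold on a compact time interval, so this step is routine.

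\emph{Part (iii): $\delta^*$ is not type 3.} Suppose $\delta^*$ were type 3, so that $\vartheta'$ and $\xi$ never vanish for $t>0$. Since $\vartheta'>0$ initially (as shown above), $\vartheta$ is increasing and bounded, so $\vartheta\to\vartheta_\infty$, and $\xi$ is monotone as long as $\cos\alpha$ keeps a sign. I would show that type 3 forces convergence to a critical configuration. The only candidates are:
\begin{itemize}
\item $\vartheta\to\pi/2$, which is excluded by analysing the boundary $\vartheta=\pi/2$ as in (i): the curve would bounce back, giving $\vartheta'=0$;
\item $\xi\to+\infty$ with $\vartheta\to\vartheta^*$, where $\alpha\to0$.
\end{itemize}

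In the second case, as $\tanh\to1$ the linearization becomes $w''-mw'+2(m_1+m_2)w=0$. If $m^2<8(m_1+m_2)$, this forces oscillation and hence $\vartheta'=0$, a contradiction. When $m^2\ge 8(m_1+m_2)$, I would instead use an openness argument. Type 1 and type 2 are both open conditions, each defined by a transversal zero at finite time. Arbitrarily close to $\delta^*$ there are type 1 values of $\delta$; their curves return to $\xi=0$. By continuous dependence, the curve of $\delta^*$ stays close to those curves, and a limiting argument gives a finite-time zero of $\xi$ or of $\vartheta'$ for $\delta^*$ itself, unless the return times $T_\delta$ diverge.

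So the crux is to rule out $T_\delta\to\infty$ along type 1 values $\delta\uparrow\delta^*$. I expect this to be the main obstacle. I would first try a monotone quantity, namely the conformal-metric energy (geodesic length), together with the asymptotic analysis at $\xi\to\infty$, to show that long excursions force a zero of $\vartheta'$.
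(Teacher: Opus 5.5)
\textbf{Part (i).} Your argument depends on a wrong picture of the blow-up limit. At $\delta=0$ the rescaled system \eqref{eq:2025-10-01-3} is $\Xi'=\cos\Psi$, $\Theta'=\sin\Psi$, $\Psi'=m_1\cos\Psi/(1+\Theta)$. These are the geodesics of $u^{2m_1}(du^2+dv^2)$ with $u=1+\Theta$. Translation invariance in $v$ gives the first integral $(1+\Theta)^{m_1}\cos\Psi\equiv 1$, so $\cos\Psi>0$ for all time.

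So the catenoid neck opens outward: $\Xi$ is strictly increasing, $\Psi\uparrow\pi/2$, and the limit curve never turns around and never returns to $\Xi=0$. Continuous dependence on compact blow-up windows therefore cannot produce a zero of $\xi$. On every such window the true curve still has $\alpha<\pi/2$, and both the turn ($\xi'=0$) and the return to $\xi=0$ happen only at heights $\vartheta\gg\delta$, after the rescaled solution has run off to infinity. Passing to the limit is actually the easy part, since the rescaled system depends smoothly on $\delta$ through $f(x)=x\cot x$.

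This is why the paper uses the blow-up only to show that $\xi$ stays small while $\alpha$ increases (Lemma~\ref{lem:2025-10-01-2}). It then needs two further ingredients:
\begin{itemize}
\item the quantitative graph estimates near the two singular boundaries (Lemmas~\ref{lem:2025-09-17-1} and~\ref{lem:2025-10-02-3});
\item the return Lemmas~\ref{lem:2025-10-02-1} and~\ref{lem:2025-10-02-2}.
\end{itemize}
With these it shows that the curve turns before $\vartheta$ reaches $\vartheta^*$ (Lemma~\ref{lem:2025-10-02-4}), and then hits $\xi=0$ before $\vartheta'$ vanishes (Lemma~\ref{lem:2025-10-02-5}). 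Nothing in your sketch replaces these steps.

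\textbf{Parts (ii) and (iii).} Part (ii) is not routine, and the claimed turning of the linearization is false in general. The frequency heuristic ignores the anti-damping term $m\tanh(\frac{2}{g}\xi)\,w'$, which is not small on the relevant scale. For $g=1$ the linearization in $r$ is $\sin^2 r\,w''+(n+1)\sin r\cos r\,w'+(n-1)w=0$. It has the solution $\cot r$, and reduction of order gives the even solution $w_2=1+\cot r\int_{\pi/2}^{r}(\sin^{1-n}t-1)\sec^2t\,dt$. Its derivative is a sum of two negative terms on $(\pi/2,\pi)$, that is, for all $\xi>0$. So the linearized curve never turns, and compact-time continuity gives nothing.

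The paper proves that the linearization turns only for $g=4$, using Leighton's comparison with a Legendre-type equation. In general it proves only that $\vartheta^*-\varepsilon$ is \emph{not type 1}, by a nonlinear argument. A type-1 curve would shadow $\vartheta\equiv\vartheta^*$ out to huge $\xi$ and be forced a definite amount above $\vartheta^*$ (Lemmas~\ref{lem:2025-12-04-2}--\ref{lem:2025-12-04-4}). It then cannot bend back to $\xi=0$, because $G<-b$ there.

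In (iii) you leave the crux open. The case you propose to dismiss by boundary analysis, $\vartheta\to\pi/2$, cannot be dismissed locally: $\xi\equiv0$, $\alpha\equiv\pi/2$ is a solution running into that boundary. It is exactly what a type-3 $\delta^*$ would do (Lemma~\ref{lem:2025-10-03-3}). The other case, $\xi\to\infty$ with $\vartheta\to\vartheta^*$, needs no linearization at all, because $\alpha'>0$ eventually there.

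The missing idea is to use the nearby type-1 curves. For $\delta^*-\varepsilon$ they follow the type-3 curve, still moving right, to a point with $\xi>b$ and $G(\vartheta)<-M$, where $Mb>\pi/2$. Their returning arc is then a decreasing graph over $(0,\xi(T_1))$, vertical at $\xi(T_1)$. By \eqref{eq:2025-07-26-1} it satisfies $\vartheta_{\xi\xi}/(1+\vartheta_\xi^2)<-M$. Integrating gives $\arctan\vartheta_\xi(0)>Mb-\pi/2>0$, which contradicts $\vartheta_\xi(0)\le 0$. This settles your worry about $T_\delta\to\infty$ without any monotone quantity.
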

This proposition will be proved in section~\ref{sec5},\ref{sec6},\ref{sec7}. To proceed, we state the following lemma, whose proof is straightforward and is therefore omitted.
\begin{lemma}\label{lem:2025-10-03-1}
	Let \(H=0\), then for \((\xi,\vartheta,\alpha)(t)\in D\) one has:
	\begin{enumerate}[{\rm (i)}]
		\item If \(\xi'(t)=0\), then \(\xi''(t)>0\) iff \(\xi(t)<0\); \(\xi''(t)<0\) iff \(\xi(t)>0\); \(\xi''(t)=0\) iff \(\xi\) is identically zero.
		\item If \(\vartheta'(t)=0\), then \(\vartheta''(t)>0\) iff \(\vartheta(t)<\vartheta^*\); \(\vartheta''(t)<0\) iff \(\vartheta(t)>\vartheta^*\); \(\vartheta''(t)=0\) iff \(\vartheta\) is identically \(\vartheta^*\).
	\end{enumerate}
\end{lemma}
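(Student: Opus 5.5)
The plan is to differentiate the first two equations of \eqref{SystemOfCMC} once more and evaluate at an instant where the relevant first derivative vanishes. Throughout, $H=0$ and the trajectory lies in $D$, so $\sin 2\vartheta>0$.

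For (i), suppose $\xi'(t)=0$. Since $\sin2\vartheta\neq0$, this forces $\cos\alpha(t)=0$, so $\sin\alpha(t)=\pm1$ and $\sin^2\alpha=1$. Differentiating $\xi'=\sin2\vartheta\cos\alpha$ gives
\[
\xi''=2\cos2\vartheta\,\vartheta'\cos\alpha-\sin2\vartheta\sin\alpha\,\alpha'.
\]
At $t$ the first term vanishes because $\cos\alpha=0$. In $\alpha'$ the term with $\cos\alpha$ also drops, leaving $\alpha'=m\sin2\vartheta\tanh\frac{2}{g}\xi\,\sin\alpha$. Hence
\[
\xi''(t)=-m\sin^2 2\vartheta\,\tanh\!\Big(\frac{2}{g}\xi\Big).
\]
Since $m>0$ and $\sin2\vartheta\ne0$, the sign of $\xi''(t)$ is opposite to the sign of $\xi(t)$. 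This gives the first two equivalences, and shows that $\xi''(t)=0$ exactly when $\xi(t)=0$.

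For the last equivalence in (i), assume $\xi(t)=0$, $\xi'(t)=0$ and $\cos\alpha(t)=0$. By the first symmetry of Proposition~\ref{SymmetryOfSystemOfCMC}, composed with a time shift, the curve $(-\xi(2t-s),\vartheta(2t-s),-\alpha(2t-s))$ is also a solution. It has the same initial data at $s=t$ provided $\alpha(t)\equiv-\alpha(t)$ modulo $2\pi$. This holds since $\alpha(t)=\pm\pi/2$, which we combine with the $2k\pi$ symmetry. Uniqueness (Proposition~\ref{ElemataryPropertyOfSystem}) then shows that the orbit is reflection-symmetric.

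That alone does not yet give $\xi\equiv0$. Instead we check directly that $\xi\equiv0$, $\alpha\equiv\pm\pi/2$ (mod $2\pi$), with $\vartheta$ solving $\vartheta'=\pm\sin2\vartheta$, is a solution. Indeed, with $\xi=0$ we get $\tanh0=0$, and with $\cos\alpha=0$ we get $\alpha'=0$, so the system is consistent. This solution has the same data at $t$, so by uniqueness $\xi\equiv0$. Conversely, if $\xi\equiv0$ then clearly $\xi''=0$.

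For (ii), suppose $\vartheta'(t)=0$, so $\sin\alpha(t)=0$ and $\cos\alpha(t)=\pm1$. Differentiating $\vartheta'=\sin2\vartheta\sin\alpha$ gives
\[
\vartheta''=2\cos2\vartheta\,\vartheta'\sin\alpha+\sin2\vartheta\cos\alpha\,\alpha'.
\]
At $t$ the first term vanishes, and $\alpha'=2(m_1\cos^2\vartheta-m_2\sin^2\vartheta)\cos\alpha$. Hence
\[
\vartheta''(t)=2\sin2\vartheta\,(m_1\cos^2\vartheta-m_2\sin^2\vartheta).
\]
The bracket equals $\cos^2\vartheta\,(m_1-m_2\tan^2\vartheta)$. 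On $(0,\pi/2)$ it is positive exactly when $\vartheta<\vartheta^*$ and negative exactly when $\vartheta>\vartheta^*$.

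For the last equivalence in (ii), assume $\vartheta(t)=\vartheta^*$, $\vartheta'(t)=0$ and $\sin\alpha(t)=0$. Consider the curve with $\vartheta\equiv\vartheta^*$, $\alpha\equiv\alpha(t)\in\pi\mathbb{Z}$, and $\xi'=\sin2\vartheta^*\cos\alpha(t)$, so that $\xi$ is linear. This is a solution: $\alpha'=0$ because $\sin\alpha=0$ kills the first term of the $\alpha'$ equation and the bracket vanishes at $\vartheta^*$. It has the same data at $t$, so uniqueness gives $\vartheta\equiv\vartheta^*$.

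The only mildly delicate point is identifying these explicit comparison solutions in the degenerate cases; everything else is direct substitution.
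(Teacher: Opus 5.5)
Your proof is correct and is exactly the direct computation the paper omits as straightforward: at the critical instant $\xi''=-m\sin^2 2\vartheta\,\tanh\frac{2}{g}\xi$ and $\vartheta''=2\sin 2\vartheta\,(m_1\cos^2\vartheta-m_2\sin^2\vartheta)$, and in the degenerate cases uniqueness against the explicit solutions ($\xi\equiv0$, $\alpha\equiv\pm\frac{\pi}{2}$, respectively $\vartheta\equiv\vartheta^*$, $\alpha\in\pi\mathbb{Z}$) forces $\xi\equiv0$, respectively $\vartheta\equiv\vartheta^*$. You should delete the reflection paragraph, whose claim $\alpha(t)\equiv-\alpha(t)\pmod{2\pi}$ is false for $\alpha(t)=\pm\frac{\pi}{2}$ because the first symmetry of Proposition~\ref{SymmetryOfSystemOfCMC} reverses the direction of travel (the reflection $(-\xi,\vartheta,\pi-\alpha)$ without time reversal would have worked), but your final argument never uses it.
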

This now gives:
\begin{proposition}\label{prop:2025-10-03-2}
	Let \(H=0\) then \(\delta^*\) is type 1 and type 2.
\end{proposition}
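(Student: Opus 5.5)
Given Proposition~\ref{prop:2025-10-03-1}, $\delta^*\in(0,\vartheta^*)$ and $\delta^*$ is not type 3. So $\delta^*$ is type 1 or type 2, and it remains to show it is both. The plan rests on two openness statements, each derived from the continuous dependence in Proposition~\ref{ElemataryPropertyOfSystem}(ii) together with the transversality in Lemma~\ref{lem:2025-10-03-1}:
\begin{itemize}
\item[(A)] If $\delta$ is type 1 but not type 2, then every nearby $\delta'$ is type 1 but not type 2.
\item[(B)] If $\delta$ is type 2 but not type 1, then every nearby $\delta'$ is type 2.
\end{itemize}
Granting these, suppose $\delta^*$ is type 1 but not type 2. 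Then (A) gives a neighbourhood of $\delta^*$ consisting of type~1 values. Since $(0,\delta^*)$ is already type 1, this contradicts the definition of $\delta^*$ as a supremum. Suppose instead $\delta^*$ is type 2 but not type 1. Then (B) makes all $\delta$ slightly below $\delta^*$ type 2. These $\delta$ are also type 1, so each is type 1 and type 2 simultaneously; this is ruled out by the limiting argument at the end. So $\delta^*$ must be both.

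\emph{Proof of (A).} Let $\delta$ be type 1 with first return time $T$, so $\xi_\delta(T)=0$ and $\vartheta_\delta'\neq0$ on $(0,T)$. Near $t=0$ we have $\vartheta_\delta''(0)>0$ by Lemma~\ref{lem:2025-10-03-1}(ii), since $\delta<\vartheta^*$. Hence $\vartheta'_\delta>0$ on $(0,T)$. Next I check that $\vartheta'_\delta(T)\neq0$. If it vanished, then at $T$ the point would sit on both $\{\xi=0\}$ and $\{\vartheta'=0\}$; I would rule this out, or treat it as the coincidence case, by invoking the symmetry $\hat\gamma$ of Proposition~\ref{SymmetryOfSystemOfCMC}. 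I also need the crossing at $T$ to be transversal, i.e.\ $\xi'_\delta(T)\neq0$. If $\xi'(T)=0$ with $\xi(T)=0$, Lemma~\ref{lem:2025-10-03-1}(i) forces $\xi\equiv0$, which is impossible. With transversality at $T$ and the strict bound $\vartheta'_\delta>0$ on compact subintervals of $(0,T]$, uniform convergence on compacts shows that nearby $\delta'$ cross $\xi=0$ near $T$ while keeping $\vartheta'>0$ up to that time. The only delicate region is near $t=0$, where $\vartheta'$ vanishes. There I use that $\vartheta''(0)>0$ holds uniformly for $\delta'$ near $\delta$, together with the first-order expansion $\vartheta'\approx\vartheta''(0)t$.

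\emph{Proof of (B).} Let $\delta$ be type 2 at time $T$, with $\xi_\delta\neq0$ on $(0,T)$. Lemma~\ref{lem:2025-10-03-1}(ii) gives $\vartheta''_\delta(T)\neq0$, since $\vartheta\equiv\vartheta^*$ is impossible here. So $\vartheta'$ changes sign transversally at $T$, and this persists for nearby $\delta'$. What remains is that $\xi_{\delta'}$ does not vanish on $(0,T']$. Near $t=0$ I expect $\xi'(0)=\sin2\delta>0$, so $\xi>0$ initially, uniformly in $\delta'$. On compact subsets of $(0,T)$ I use the strict sign of $\xi_\delta$, and at $T$ I use $\xi_\delta(T)\neq0$, which holds because $\delta$ is not type 1.

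\emph{Main obstacle: the coincidence case.} Let $\delta_k\uparrow\delta^*$ be both type 1 and type 2. Then the first zero of $\vartheta'$ occurs exactly when $\xi$ returns to $0$. Passing to the limit, $\delta^*$ has $\xi=0$ and $\vartheta'=0$ at the same time $T$, which is precisely the statement that $\delta^*$ is type 1 and type 2. So in that branch the conclusion holds anyway, and the argument closes. I expect the most care to be needed in showing that the return times $T_k$ stay bounded, so that the limit time exists. For this I would use that $\vartheta_{\delta_k}$ is monotone and confined to $(0,\pi/2)$, and that $\alpha$ must turn by $\pi/2$ in bounded time: the $\cos\alpha$ coefficient $2(m_1\cos^2\vartheta-m_2\sin^2\vartheta)$ is bounded below while $\vartheta<\vartheta^*$.
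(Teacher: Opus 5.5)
Your strategy is the standard continuous-dependence argument that the paper invokes without giving details. Write \(T_\xi(\delta)\) and \(T_\vartheta(\delta)\) for the first positive zeros of \(\xi_\delta\) and \(\vartheta'_\delta\). The conditions ``\(T_\xi<T_\vartheta\)'' and ``\(T_\vartheta<T_\xi\)'' (with the smaller time finite) are each open in \(\delta\). This uses the transversality from Lemma~\ref{lem:2025-10-03-1} and the uniform positivity of \(\xi'_\delta\) and \(\vartheta''_\delta\) near \(t=0\), which needs \(\delta^*<\vartheta^*\). Together with \((0,\delta^*)\subset\) type 1 and \(\delta^*\) not being type 3, openness forces \(T_\xi(\delta^*)=T_\vartheta(\delta^*)<\infty\). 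Your (A) and your handling of the case ``type 1, not type 2'' are fine. The only correction there is that \(\vartheta'_\delta(T)\neq0\) follows immediately from the hypothesis \emph{not type 2}, since \(\xi_\delta\neq0\) and \(\vartheta'_\delta\neq0\) on \((0,T)\); the symmetry \(\hat\gamma\) plays no role.

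The case ``type 2, not type 1'' is where your write-up has a defect. You close it with a limiting argument over coincidence values \(\delta_k\uparrow\delta^*\), and the bound on the coincidence times \(T_k\) that you propose does not work. Since \(\vartheta'_{\delta_k}>0\) on \((0,T_k)\) and \(\vartheta'_{\delta_k}(T_k)=0\), Lemma~\ref{lem:2025-10-03-1} gives \(\vartheta''_{\delta_k}(T_k)<0\), hence \(\vartheta_{\delta_k}(T_k)>\vartheta^*\). So the coefficient \(2(m_1\cos^2\vartheta-m_2\sin^2\vartheta)\) degenerates at \(\vartheta^*\) and becomes negative before \(T_k\). Monotonicity of \(\vartheta\) inside \((0,\pi/2)\) also gives no time bound, because a type 3 trajectory is exactly monotone for infinite time.

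The detour is unnecessary. Your own proof of (B) shows \(\xi_{\delta'}>0\) on \((0,T']\), where \(T'\) is the first zero of \(\vartheta'_{\delta'}\). So nearby \(\delta'\) are type 2 and \emph{not} type 1, which immediately contradicts \((0,\delta^*)\subset\) type 1. If you prefer to keep the limit argument, make two repairs:
\begin{itemize}
\item Bound \(T_k\) using Proposition~\ref{prop:2025-10-03-1}(iii): if \(T_k\to\infty\), then \(\delta^*\) would be type 3. Alternatively, in this branch \(\delta^*\) is type 2, so continuity already gives \(T_k\le T+\eta\).
\item Prove the strict signs \(\xi_{\delta^*}>0\) and \(\vartheta'_{\delta^*}>0\) on \((0,T)\) for the limit. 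An interior zero of the nonnegative limit would be a degenerate zero. Lemma~\ref{lem:2025-10-03-1} excludes this unless \(\xi\equiv0\) or \(\vartheta\equiv\vartheta^*\).
\end{itemize}
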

\begin{proof}
	Using the  continuous dependence of the solutions to \eqref{eq:2025-10-02-1} on the initial conditions, the proof is standard, see also \cite{R}.
\end{proof}
Applying proposition~\ref{SymmetryOfSystemOfCMC}, this gives the existence of periodic curves, and then proves theorem~\ref{MainTheorem}.
The following figure shows a periodic profile curve for \(g=4,\,m_1=4,\,m_2=5\) and \(\delta^*\approx 0.341639\).
\begin{figure}[htbp]
	\centering
	\includegraphics[scale=0.38]{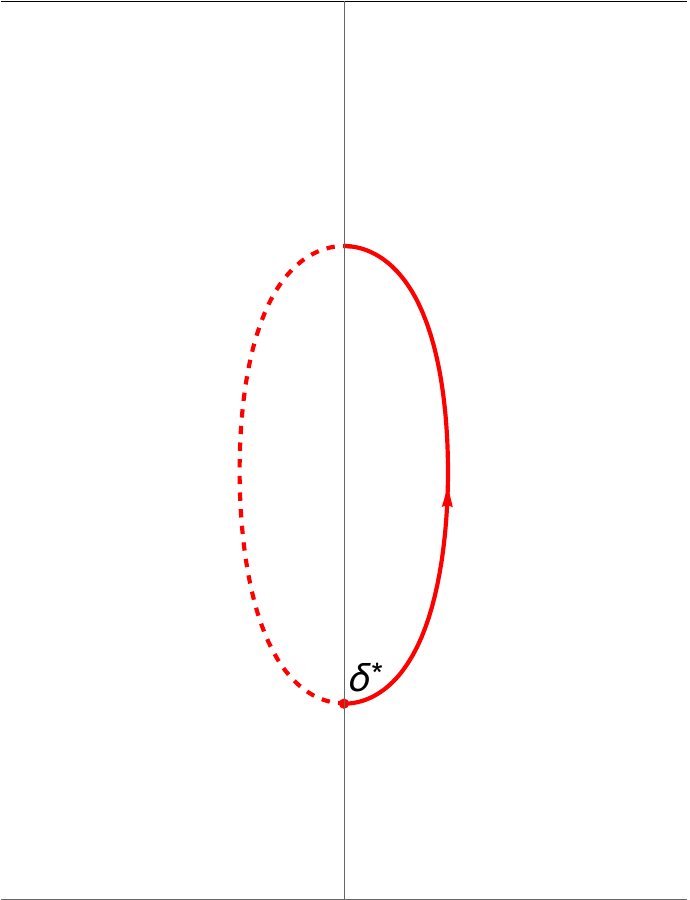}
	\caption{The existence of the periodic profile curve}
	\label{PeriodicCurve}
\end{figure}

	\section{Preparations for proposition~\ref{prop:2025-10-03-1}}
	Recall that the equation \eqref{Xi} is
	\[\frac{\dd^2 \xi}{\dd \vartheta^2} = -\left[ 1+ \left( \frac{\dd \xi}{\dd \vartheta} \right)^2 \right] \left[ m \tanh \frac{2}{g}\xi +\left( m_1 \cot \vartheta - m_2 \tan \vartheta \right) \frac{\dd \xi}{\dd \vartheta} \right].\]

	\begin{lemma}\label{lem:2025-09-17-1}
		Let \(\xi(\vartheta),\, a \leq \vartheta \le \vartheta^*\) be a solution of~\eqref{Xi} with \(\xi'(a)=1\), \(\xi(\vartheta^*)>0\) and \(\xi'(\vartheta) \ge 0\) for \(\vartheta\in [a,\vartheta^*]\) (\(\xi'(a)=-1\), \(\xi(\vartheta^*)<0\) and \(\xi'(\vartheta) \le 0\) for \(\vartheta\in [a,\vartheta^*]\)).
		If \(|\xi(\vartheta^*)|\) and \(a\) are sufficiently small, then \(\xi(a) \le 0\) ($\xi(a) \ge 0$).
	\end{lemma}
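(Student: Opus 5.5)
The plan is a contradiction argument via blow-up (normalization by $\xi(\vartheta^*)$), treating only the first version; the bracketed version follows by applying the first one to $-\xi$, since \eqref{Xi} is invariant under $\xi\mapsto-\xi$ (the symmetry $\hat\gamma$ of Proposition~\ref{SymmetryOfSystemOfCMC}). The point is that the normalization is by $\varepsilon:=\xi(\vartheta^*)$ alone, so no relation between $\varepsilon$ and $a$ is needed.

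\textbf{Setup.} Suppose the statement fails. Then there are sequences $a_k\to0$ and $\varepsilon_k:=\xi_k(\vartheta^*)\to0^+$ together with solutions $\xi_k$ on $[a_k,\vartheta^*]$. Each $\xi_k$ satisfies $\xi_k'(a_k)=1$, $\xi_k'\ge0$ and $\xi_k(a_k)>0$.

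\textbf{Step 1: a priori bounds.} Since $\xi_k$ is nondecreasing and $\xi_k(a_k)>0$, we get $0<\xi_k\le\varepsilon_k$ on $[a_k,\vartheta^*]$. On $(0,\vartheta^*]$ we have $m_1\cot\vartheta-m_2\tan\vartheta\ge0$. Together with $\tanh\frac{2}{g}\xi_k>0$ and $\xi_k'\ge0$, equation~\eqref{Xi} gives $\xi_k''\le0$, so $\xi_k$ is concave.

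Fix $\theta_0\in(0,\vartheta^*)$. For $k$ large we have $a_k<\theta_0/2$. Concavity says the slope at $\vartheta\ge\theta_0$ is at most the secant slope from $\theta_0/2$, hence
\[
0\le\xi_k'(\vartheta)\le\frac{\xi_k(\vartheta)-\xi_k(\theta_0/2)}{\vartheta-\theta_0/2}\le\frac{2\varepsilon_k}{\theta_0},\qquad \vartheta\in[\theta_0,\vartheta^*].
\]

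\textbf{Step 2: rescaling and passage to the limit.} Put $u_k:=\xi_k/\varepsilon_k$. Then $0<u_k\le1$, $u_k(\vartheta^*)=1$, $u_k'\ge0$, and $u_k'$ is bounded on $[\theta_0,\vartheta^*]$ uniformly in $k$. Dividing \eqref{Xi} by $\varepsilon_k$ gives
\[
u_k''=-\bigl(1+\varepsilon_k^2u_k'^2\bigr)\Bigl[\tfrac{m}{\varepsilon_k}\tanh\bigl(\tfrac{2}{g}\varepsilon_ku_k\bigr)+(m_1\cot\vartheta-m_2\tan\vartheta)u_k'\Bigr].
\]
So $u_k''$ is also uniformly bounded on $[\theta_0,\vartheta^*]$. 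By Arzel\`a--Ascoli and a diagonal argument over $\theta_0\downarrow0$, a subsequence converges in $C^1_{loc}((0,\vartheta^*])$, and then in $C^2_{loc}$ through the equation. The limit $u$ solves the linear equation
\[
u''+(m_1\cot\vartheta-m_2\tan\vartheta)u'+\tfrac{2m}{g}u=0,
\qquad\text{i.e.}\qquad
\bigl(\sin^{m_1}\!\vartheta\cos^{m_2}\!\vartheta\,u'\bigr)'=-\tfrac{2m}{g}\sin^{m_1}\!\vartheta\cos^{m_2}\!\vartheta\,u .
\]
It also satisfies $0\le u\le1$, $u'\ge0$ and $u(\vartheta^*)=1$.

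\textbf{Step 3: contradiction from the limit equation.} Set $S=\sin^{m_1}\vartheta\cos^{m_2}\vartheta$. From the divergence form, $(Su')'\le0$, so $Su'$ is nonincreasing on $(0,\vartheta^*]$ and has a limit $L\in[0,\infty]$ as $\vartheta\to0^+$.

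If $L>0$, then $u'\ge L'/\vartheta^{m_1}$ near $0$ for some $L'>0$. Since $m_1\ge1$, integrating this shows $u$ is unbounded near $0$, contradicting $0\le u\le1$. Hence $L=0$.

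With $L=0$, integrate the divergence form from $0$:
\[
S(\vartheta)u'(\vartheta)=-\tfrac{2m}{g}\int_0^\vartheta S\,u .
\]
The right-hand side is $\le0$, while the left-hand side is $\ge0$. Therefore $\int_0^{\vartheta}Su=0$ for every $\vartheta$, which forces $u\equiv0$ because $u\ge0$ is continuous. This contradicts $u(\vartheta^*)=1$.

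\textbf{Main obstacle.} The delicate step is Step 2. One must check that the nonlinear terms are harmless under the scaling: $\varepsilon_k^{-1}\tanh(\frac{2}{g}\varepsilon_ku_k)\to\frac{2}{g}u$ uniformly because $u_k\le1$, and $\varepsilon_k^2u_k'^2\to0$ on compacts by Step 1. One must also avoid any control near $\vartheta=a_k$, where $u_k'(a_k)=1/\varepsilon_k$ blows up. That is exactly why the argument works only on compacts of $(0,\vartheta^*]$ and recovers the behaviour at $0$ from the bound $0\le u\le1$ and the divergence structure, rather than from the initial data.
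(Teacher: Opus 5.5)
Your proof is correct, and it takes a genuinely different route from the paper's.

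\textbf{The paper's route.} The paper argues quantitatively on the nonlinear equation. Assuming $\xi(a)>0$, it integrates the logarithmic form $\xi''/(\xi'(1+\xi'^2))=-m\tanh(\frac{2}{g}\xi)/\xi'-G$ from the anchor $\xi'(a)=1$ to the midpoint, which yields $a^{m_1}\asymp\xi(\vartheta^*)$. It then introduces a point $b$ with $G(b)=1/\xi'(b)$ and proves that $q=b/a\to\infty$ and that $\xi'(\vartheta)\ge c\,(b/\vartheta)^{m_1}b$ on $[a,b]$. Integrating this forces $\xi(b)-\xi(a)>\xi(\vartheta^*)$, a contradiction.

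\textbf{Your route.} You normalize by $\xi(\vartheta^*)$ and use concavity only to get uniform $C^2$ bounds on compacta of $(0,\vartheta^*]$. Passing to the limit gives the linearized (Jacobi) equation $(Su')'=-\frac{2m}{g}Su$ with $S=\sin^{m_1}\vartheta\cos^{m_2}\vartheta$. There, monotonicity of $Su'$ together with the non-integrability of $\vartheta^{-m_1}$ at $0$ (that is, $m_1\ge 1$) rules out any bounded, nonnegative, nondecreasing solution with $u(\vartheta^*)=1$.

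\textbf{Comparison.} The underlying mechanism is the same in both: the $\vartheta^{-m_1}$ growth of the slope near $0$ produces more rise than $\xi(\vartheta^*)$ allows. Your version has several advantages.
\begin{enumerate}
\item It needs no bookkeeping of constants such as $c_1,\dots,c_4$.
\item It never has to control the solution near $\vartheta=a$.
\item It never uses $\xi'(a)=1$, so it actually proves a stronger statement.
\item It transfers verbatim to Lemma~\ref{lem:2025-10-02-3}, using $m_2\ge 1$ and the endpoint $\pi/2$.
\end{enumerate}
What the paper's approach buys in return is effectiveness: explicit smallness thresholds and the scaling relation $a^{m_1}\asymp\xi(\vartheta^*)$. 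However, the later applications of the lemma use only its qualitative form.
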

	\begin{proof}
		Suppose \(\xi'(a)=1\), \(\xi(\vartheta^*)>0\) and \(\xi'(\vartheta) \ge 0\) for \(\vartheta\in [a,\vartheta^*]\).
		We assume \(\xi(a)>0\) for contradiction.
		The first step is to show that for sufficiently small \(|\xi(\vartheta^*)|\) and \(a\), the uniform estimates
		\begin{equation}\label{eq:2025-09-17-4}
			c_1\, \xi(\vartheta^*) <a ^{m_1}<c_2\, \xi(\vartheta^*).
		\end{equation} holds for some constant \(c_1\), \(c_2\).
		Specifically, we may take \[c_1=\frac{m \vartheta^*}{8g}\mathrm{e}^{-17} \sin^{m_1}\frac{\vartheta^*}{2}\cos^{m_2}\vartheta^*\qquad \text{and}\qquad c_2=\frac{1}{\vartheta^*}2^{m_1+2}\mathrm{e}^{18}\sin^{m_1}\vartheta^*\cos^{m_2}\frac{\vartheta^*}{2}.\]
		Since \(\xi'(\vartheta)\ge 0\) for all \(\vartheta\in(a,\vartheta^*)\) we obtain that \(\xi(\vartheta)>0\) for such \(\vartheta\).
		Then from \eqref{Xi} one gets that \(\xi''(\vartheta)<0\) for all \(\vartheta\in(a,\vartheta^*)\) and then \( \xi'(\vartheta)\) is strictly decreasing in this interval.
		Monotonicity of \(\xi'(\vartheta)\) and mean value theorem then imply
		\begin{equation}\label{eq:2025-09-18-1}
			\xi' \left( \frac{a+\vartheta^*}{2} \right)<\xi(\vartheta^*)\frac{2}{\vartheta^*-a},\qquad \xi \left( \frac{a+\vartheta^*}{2}\right)>\frac{1}{2}\xi(\vartheta^*) .
		\end{equation}
		Combining the second inequality with the graph ODE \eqref{Xi} yields (recall that \(\xi(\vartheta^*)\) are sufficiently small) additionally the lower bound for \(\xi'\) at \((a+\vartheta^*)/2\). So there are uniform constant \(\tilde{c}_1,\,\tilde{c}_2>0\) so that for sufficiently small \(\xi(\vartheta^*)\) and \(a\):
		\begin{equation}\label{eq:2025-09-17-2}
			\tilde{c}_1\, \xi(\vartheta^*) < \xi'\left( \frac{a+\vartheta^*}{2} \right)< \tilde{c}_2\,\xi(\vartheta^*).
		\end{equation}
		One may take \(\tilde{c}_1=\frac{m\vartheta^*}{8g}\) and \(\tilde{c}_2=\frac{4}{\vartheta^*}\).
		Rewriting the graph ODE as
		\begin{equation}\label{eq:2025-09-17-3}
			\frac{\xi''}{\xi'(1+(\xi')^2)}=- \frac{m \tanh \frac{2}{g}\xi}{\xi'}-G(\vartheta),
		\end{equation}
		where \(G(\vartheta):= m_1 \cot \vartheta-m_2 \tan \vartheta\).
		One notes that the first term on the right-hand side is \(O(1)\) in the interval \((a,\frac{a+\vartheta^*}{2})\) by \eqref{eq:2025-09-17-2} and monotonicity of \(\xi'\), and further that the left-hand side has \(\ln(\frac{\xi'}{\sqrt{1+(\xi')^2}})\) as an anti-derivative.
		Integrating \eqref{eq:2025-09-17-3} over \((a,\frac{a+\vartheta^*}{2})\) then gives
		\[\ln \left( \xi' \left( \frac{a+\vartheta^*}{2} \right)\right)= m_1 \ln \sin a+O(1) .\]
		The approximation \(\sin x = x+ O(x^3)\) then gives the bounds \eqref{eq:2025-09-17-4} from \eqref{eq:2025-09-17-2}.

		For the next step, note that \(G(\vartheta)-\xi'(\vartheta)^{-1}\) becomes negative as \(\vartheta\to \vartheta^*\) and \(G(a)-1\) as \(\vartheta\to a\), which for \(a\) small enough will be positive.
		Hence for small enough \(a\), there is a \(b\in(a,\vartheta)\) so that \(G(b)=\xi'(b)^{-1}\).
		From \(\eqref{eq:2025-09-18-1}\) and monotonicity of \(\xi'\) one knows that \(\xi'(\vartheta^*)\) converges to zero as \(\xi(\vartheta^*)\) and \(a\) approach zero.
		Consequently, \(\xi(\vartheta)\) and its derivatives converge to zero uniformly on compacta, which then follows that \(b\) converges to zero.
		
		We let \[q=\frac{b}{a}\] and show next that there is a uniform constant \(c_3>0\) so that \(q^{m_1} \ge {c_3}/{b}\), and hence that \(q\) grows unboundedly as \(\xi(\vartheta^*)\) and \(a\) approach zero.
		One may take \(c_3=m_1\mathrm{e}^{-18}2^{-(m_1+1)}\).

		To show this we again integrate \eqref{eq:2025-09-17-3}, this time from \(\vartheta \ge a\) to \(b\).
		The result is:
		\[\ln \xi'(b)-\ln \xi'(\vartheta) = m_1( \ln \sin \vartheta - \ln \sin b)+O(1).\]
		Noting that \(\xi'(b)=G(b)^{-1}= \frac{1}{m_1} b+O(b^3)\). Using the expansion of sine close to \(0\) implies the existence of \(c_3,\,c_4>0\) so that for small enough \(\xi(\vartheta^*)\) and \(a\):
		\begin{equation}\label{eq:2025-09-18-2}
			c_4\,\xi'(\vartheta) \ge \left( \frac{b}{\vartheta} \right)^{m_1} b \ge c_3\, \xi'(\vartheta).
		\end{equation}
		One may take \(c_4=m_1\mathrm{e}^{18}2^{m_1+1}\).
		Taking \(\vartheta=a\) in~\eqref{eq:2025-09-18-2} shows \(q^{m_1} \ge c_3/b\).
		On the other hand, if one integrate \(\xi'(\vartheta)\) over \((a,b)\) one finds by \eqref{eq:2025-09-18-2} that
		\[c_4(\xi(b)-\xi(a)) \ge \begin{cases}
			b^2 \ln q & m_1=1\\
			\frac{1}{m_1-1}b^2(q^{m_1-1}-1)& m_2>1.
		\end{cases}\]
		Noting that \(b^2=\frac{a^{m_1}}{b^{m_1}}q^{m_1}b^2>c_1c_3b^{1-m_1}\xi(\vartheta^*)\) and using unboundedness of \(q\) one sees that \(\xi(b)-\xi(a)\) will be larger than \(\xi(\vartheta^*)\) for sufficiently small \(\xi(\vartheta^*)\) and \(a\), contradicting our assumption that \(\xi(a) \ge 0\).
		The proof of the other case is similar.

	\end{proof}
	The following lemma can be proved similarly.
	\begin{lemma}\label{lem:2025-10-02-3}
		Let \(\xi(\vartheta),\, \vartheta^* \le \vartheta \le a \) be a sequence of solution of \eqref{Xi} with \(\xi'(a)=-1\), \(\xi(\vartheta^*)>0\) and \(\xi'(\vartheta) \le 0\) for \(\vartheta\in [\vartheta^*,a]\) (\(\xi'(a)=1\), \(\xi(\vartheta^*)<0\) and \(\xi'(\vartheta) \ge 0\) for \(\vartheta\in [\vartheta^*,a]\)).
		If \(|\xi(\vartheta)|\) and \(\frac{\pi}{2}-a\) are sufficiently small, then \(\xi(a) \le 0\) ($\xi(a) \ge 0$).
	\end{lemma}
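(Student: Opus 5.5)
The natural plan is to reduce Lemma~\ref{lem:2025-10-02-3} to Lemma~\ref{lem:2025-09-17-1} through the reflection $\vartheta\mapsto\frac{\pi}{2}-\vartheta$. Set $\tilde\vartheta=\frac{\pi}{2}-\vartheta$ and $\tilde\xi(\tilde\vartheta)=\xi(\frac{\pi}{2}-\tilde\vartheta)$. Then $\dd\tilde\xi/\dd\tilde\vartheta=-\dd\xi/\dd\vartheta$, while the second derivative is unchanged. Also $m_1\cot\vartheta-m_2\tan\vartheta=m_1\tan\tilde\vartheta-m_2\cot\tilde\vartheta$. Hence $\tilde\xi$ satisfies
\[
\frac{\dd^2 \tilde\xi}{\dd \tilde\vartheta^2} = -\left[ 1+ \left( \frac{\dd \tilde\xi}{\dd \tilde\vartheta} \right)^2 \right] \left[ m \tanh \frac{2}{g}\tilde\xi +\left( m_2 \cot \tilde\vartheta - m_1 \tan \tilde\vartheta \right) \frac{\dd \tilde\xi}{\dd \tilde\vartheta} \right],
\]
which is \eqref{Xi} with $m_1$ and $m_2$ interchanged. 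The equilibrium angle becomes $\arctan\sqrt{m_2/m_1}=\frac{\pi}{2}-\vartheta^*$, which is exactly the image of $\vartheta^*$.

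Under this change the hypotheses transform as follows. The interval $[\vartheta^*,a]$ becomes $[\tilde a,\tilde\vartheta^*]$ with $\tilde a=\frac{\pi}{2}-a$ small. The condition $\xi'(a)=-1$ becomes $\tilde\xi'(\tilde a)=1$, and $\xi'\le0$ becomes $\tilde\xi'\ge0$. The condition $\xi(\vartheta^*)>0$ becomes $\tilde\xi(\tilde\vartheta^*)>0$. These are precisely the hypotheses of the first case of Lemma~\ref{lem:2025-09-17-1}, and the second case corresponds in the same way.

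It remains to check that the proof of Lemma~\ref{lem:2025-09-17-1} uses nothing specific about which multiplicity is $m_1$. I will go through its steps with $(m_1,m_2)$ replaced by $(m_2,m_1)$ and $\vartheta^*$ replaced by $\frac{\pi}{2}-\vartheta^*$:
\begin{enumerate}[(1)]
\item concavity of $\tilde\xi$ from positivity;
\item the mean-value bounds \eqref{eq:2025-09-18-1} and \eqref{eq:2025-09-17-2};
\item integration of \eqref{eq:2025-09-17-3}, which now gives $\ln\tilde\xi'=m_2\ln\sin\tilde a+O(1)$, i.e.\ $\tilde a^{m_2}\asymp\tilde\xi(\tilde\vartheta^*)$;
\item existence of $b$ with $\tilde G(b)=\tilde\xi'(b)^{-1}$, where $\tilde G=m_2\cot-m_1\tan$ is positive near $0$ and negative near the equilibrium;
\item the estimate $q^{m_2}\ge c_3/b$ with $q=b/\tilde a$, and the final integration showing $\tilde\xi(b)-\tilde\xi(\tilde a)>\tilde\xi(\tilde\vartheta^*)$, a contradiction.
\end{enumerate}
The constants change only by replacing $m_1,m_2,\vartheta^*$ with $m_2,m_1,\frac{\pi}{2}-\vartheta^*$. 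Undoing the substitution gives $\xi(a)=\tilde\xi(\tilde a)\le0$, and in the other case $\ge0$.

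The main obstacle is the final step of that verification. The growth estimate $b^2\gtrsim b^{1-m_2}\tilde\xi(\tilde\vartheta^*)$ together with the unboundedness of $q$ must dominate in both regimes, $m_2=1$ (logarithmic term) and $m_2>1$ (power term). Since that argument used only $m_1\ge1$, it transfers to $m_2\ge1$ without change. One also has to read the hypothesis ``$|\xi(\vartheta)|$ sufficiently small'' as smallness of $|\xi(\vartheta^*)|$.
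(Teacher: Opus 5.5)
Your proposal is correct and follows the paper's approach. The paper only says this lemma ``can be proved similarly'' to Lemma~\ref{lem:2025-09-17-1}, and your reflection $\vartheta\mapsto\frac{\pi}{2}-\vartheta$ is what makes that precise: it carries \eqref{Xi} to itself with $m_1$ and $m_2$ interchanged ($m$ being symmetric in them), sends $\vartheta^*$ to the new equilibrium $\frac{\pi}{2}-\vartheta^*$, and maps the hypotheses and conclusion onto those of Lemma~\ref{lem:2025-09-17-1}. Your step-by-step re-check of that lemma is harmless but not needed, since its argument uses only that the multiplicities are positive, and the swapped pair $(m_2,m_1)$ is in any case realized by replacing $N$ with $-N$.
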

	Next we recall the system \eqref{SystemOfCMC} for \(H=0\):
	\begin{equation}\label{eq:2025-10-02-1}
		\left\{ \begin{aligned}
			\xi' &= \sin 2 \vartheta \cos \alpha,\\
			\vartheta' &= \sin 2 \vartheta \sin \alpha,\\
			\alpha' &= m \sin 2 \vartheta \tanh \frac{2}{g}\xi \sin \alpha + 2(m_1 \cos ^2 \vartheta - m_2 \sin^2 \vartheta)\cos \alpha,
		\end{aligned}\right.
	\end{equation}
	Note that \(\xi'(t),\,\vartheta'(t),\,\alpha'(t)\) is bounded, hence all higher derivatives of \(\xi,\,\vartheta,\,\alpha\) are also bounded.
	\begin{lemma}\label{lem:2025-10-02-1}
		Let \(H=0\) and \(t_0\in \mathbb{R}\), then $\vartheta(t_0)<\vartheta^*$ ($\vartheta(t_0)>\vartheta^*$) and \(\vartheta'(t_0)>0\)(\(\vartheta'(t_0)<0\)) imply the existence of a time $T\in(0,\infty)$ such that $\vartheta(t_0+T)=\vartheta^*$.
	\end{lemma}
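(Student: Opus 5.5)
We argue by contradiction. Treat the case $\vartheta(t_0)<\vartheta^*$, $\vartheta'(t_0)>0$; the other case is identical after reversing the inequalities (the symmetry $\tilde\gamma$ of Proposition~\ref{SymmetryOfSystemOfCMC} is only available when $m_1=m_2$, so we simply redo the argument). Suppose $\vartheta(t)<\vartheta^*$ for all $t>t_0$.

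\emph{Step 1: monotonicity and limits.} First we show $\vartheta'(t)>0$ for all $t\ge t_0$. If not, let $t_1>t_0$ be the first zero of $\vartheta'$. Since $\vartheta'>0$ on $[t_0,t_1)$, we must have $\vartheta''(t_1)\le 0$. But $\vartheta(t_1)<\vartheta^*$, so Lemma~\ref{lem:2025-10-03-1}(ii) gives $\vartheta''(t_1)>0$, a contradiction.

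Hence $\vartheta$ increases to a limit $\vartheta_\infty\in(\vartheta(t_0),\vartheta^*]$. In particular $\sin2\vartheta$ stays bounded below by a positive constant. Since $\vartheta'=\sin 2\vartheta\sin\alpha>0$, we get $\sin\alpha>0$ for all $t\ge t_0$. By Proposition~\ref{SymmetryOfSystemOfCMC} (the shift $\check\gamma$) we may assume $\alpha(t)\in(0,\pi)$ for $t\ge t_0$. Moreover
\[
\int_{t_0}^\infty \sin\alpha\,\dd t\le C\int_{t_0}^\infty\vartheta'\,\dd t<\infty .
\]
Because $\alpha'$ is bounded, $\sin\alpha$ is uniformly continuous, so $\sin\alpha(t)\to 0$ (Barbalat's lemma). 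Since $\alpha$ is continuous with values in $(0,\pi)$, it follows that $\alpha\to 0$ or $\alpha\to\pi$. Thus $\cos\alpha$ tends to $\epsilon\in\{\pm1\}$.

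\emph{Step 2: behaviour of $\xi$.} From $\xi'=\sin2\vartheta\cos\alpha$ we get $\xi'\to \epsilon\sin 2\vartheta_\infty\neq0$. Hence $\xi\to\epsilon\infty$, and therefore $\tanh\frac{2}{g}\xi\,\cos\alpha\to 1$.

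\emph{Step 3: the key computation.} Put $u=\sin\alpha>0$. Differentiating and using \eqref{eq:2025-10-02-1},
\[
u'=\cos\alpha\,\alpha' = m\sin2\vartheta\,\bigl(\tanh\tfrac{2}{g}\xi\cos\alpha\bigr)\,u+2\bigl(m_1\cos^2\vartheta-m_2\sin^2\vartheta\bigr)\cos^2\alpha .
\]
For large $t$ the first term is positive, because $u>0$, $\sin2\vartheta$ is bounded below, and $\tanh\frac{2}{g}\xi\cos\alpha\to1$. The second term is nonnegative, because $\vartheta<\vartheta^*$ gives $m_1\cos^2\vartheta-m_2\sin^2\vartheta>0$. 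So $u'>0$ for all large $t$. Then $u$ is eventually increasing and positive, which is incompatible with $u\to0$. This contradiction shows that $\vartheta$ reaches $\vartheta^*$ at some finite time $t_0+T$.

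\emph{Main obstacle.} The delicate case is $\vartheta_\infty=\vartheta^*$. There the restoring term $2(m_1\cos^2\vartheta-m_2\sin^2\vartheta)\cos\alpha$ degenerates, and a naive linearisation around the constant solution $\vartheta\equiv\vartheta^*$ looks like it might admit a stable direction. The point to check is that Step 3 does not need any lower bound on that term, only its sign. The required positivity comes from the sign of $\tanh\frac{2}{g}\xi\,\cos\alpha$, forced by Step 2, and this is where I would spend care, verifying that the sign bookkeeping is correct in both sub-cases $\alpha\to0$ and $\alpha\to\pi$. For the case $\vartheta>\vartheta^*$, $\vartheta'<0$ one runs the same argument with $u=-\sin\alpha>0$.
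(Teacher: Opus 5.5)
Your proof is correct and follows essentially the same route as the paper. You get monotonicity of $\vartheta$ from Lemma~\ref{lem:2025-10-03-1}, then decay of $\vartheta'$ (equivalently $\sin\alpha\to 0$), and finally a sign argument in the $\alpha$-equation once $\xi\to\pm\infty$ that pushes $\alpha$ away from $\{0,\pi\}$. The differences are only organizational: your observation that $\sin\alpha\to0$ forces $\cos\alpha\to\pm1$ and hence $\xi\to\pm\infty$ absorbs the paper's separate cases (infinitely many zeros of $\xi'$, or $\xi$ converging), and working with $u=\sin\alpha$ treats $\alpha\to0$ and $\alpha\to\pi$ together instead of via the paper's ``without loss of generality''.
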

	\begin{proof}
		Suppose \(\vartheta(t_0)<\vartheta^*\), \(\vartheta'(t_0)>0\) and \(\vartheta(t)<\vartheta^*\) for all \(t>t_0\).
		We shall derive a contradiction.
		Let us first note that $\vartheta'(t) > 0$ for all $t > t_0$ by lemma~\ref{lem:2025-10-03-1}.
		If there is a sequence \(\left\{ t_i\right\}_{i=1}^\infty\) increasing to \(\infty\) such that \(\xi'(t_i)=0\), then from the increasing monotonicity of \(\vartheta(t)\) one sees that  \(\vartheta'(t_i)= \sin 2 \vartheta(t_i)>0\) is bounded away from zero for all \(i\).
		Therefore the boundedness of \(\vartheta''(t)\) leads directly to a contradiction.
		If \(\xi'(t)\ne 0\) for large \(t\), we then assume \(\xi'(t)>0\) and \(\alpha(t)\in(0,\frac{\pi}{2})\) for large \(t\) without loss of generality. It follows that \(\lim_{t\to \infty}\xi(t)=\infty\) or \(\lim_{t\to \infty}\xi(t)\) is finite.
		The latter yields that \(\xi'\) and \(\vartheta'\) converge to zero and then \(\vartheta(t)\) converges to \(\frac{\pi}{2}\), which is a contradiction.
		For the former, one sees that \(\alpha'(t)>0\) for large \(t\), which follows that \(\alpha(t)\) is bounded away from \(0\) for large \(t\).
		Hence \(\vartheta'(t)\) is also bounded away from \(0\) for large \(t\),  which again leads to a contradiction.
		The proof for the other case proceeds analogously.
	\end{proof}

	\begin{lemma}\label{lem:2025-10-02-2}
		Let \(H=0\) and \(t_0\in \mathbb{R}\), then $\xi(t_0)<0$ ($\xi(t_0)>0$) and \(\xi'(t_0)>0\)(\(\xi'(t_0)<0\)) imply the existence of a time $T\in(0,\infty)$ so that $\xi(t_0+T)=0$.
	\end{lemma}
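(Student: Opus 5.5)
The argument will mirror the proof of lemma~\ref{lem:2025-10-02-1}, with the roles of \(\xi\) and \(\vartheta\) exchanged. I treat the case \(\xi(t_0)<0\), \(\xi'(t_0)>0\); the other case then follows from the symmetry \(\hat\gamma\) of proposition~\ref{SymmetryOfSystemOfCMC}, which replaces \(\xi\) by \(-\xi(-t)\), or by repeating the argument. Suppose, for contradiction, that \(\xi(t)<0\) for all \(t>t_0\).

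\textbf{Step 1: \(\xi\) increases monotonically.} If \(\xi'\) vanished at some first time \(t_1>t_0\), then \(\xi(t_1)<0\). By lemma~\ref{lem:2025-10-03-1}(i) this gives \(\xi''(t_1)>0\), which is impossible at a first zero of \(\xi'\) reached from positive values. Hence \(\xi'(t)>0\), so \(\cos\alpha(t)>0\), for all \(t>t_0\). Consequently \(\xi\) increases to a finite limit \(\xi_\infty\le 0\), and \(\int_{t_0}^\infty \xi'\,\dd t<\infty\).

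\textbf{Step 2: rule out stalling of \(\vartheta\).} Since \(\xi''\) is bounded, \(\xi'\) is uniformly continuous, and integrability gives \(\xi'(t)=\sin 2\vartheta\cos\alpha\to 0\). Two cases remain.
\begin{itemize}
\item Suppose \(\sin 2\vartheta(t_i)\) stays bounded below along a sequence \(t_i\to\infty\). Then \(\cos\alpha(t_i)\to 0\), so \(|\vartheta'(t_i)|\) is close to \(\sin 2\vartheta(t_i)\), which is bounded below. This forces \(\vartheta\) to move by a definite amount near each \(t_i\).
\item Alternatively, \(\vartheta\) approaches the boundary \(\{0,\frac{\pi}{2}\}\).
\end{itemize}

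I will use lemma~\ref{lem:2025-10-03-1}(ii) to control the oscillation of \(\vartheta\). Between consecutive critical points, \(\vartheta\) is monotone. Minima occur below \(\vartheta^*\) and maxima above \(\vartheta^*\), so each monotone arc crosses \(\vartheta^*\).

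Along an arc crossing \(\vartheta^*\) where \(\sin2\vartheta\) is bounded below, \(\cos\alpha\) must become small; otherwise \(\xi\) gains a definite increment there. Near \(\vartheta^*\), with \(\cos\alpha\) small, the \(\alpha'\) equation reduces to
\[
\alpha' \approx m\sin 2\vartheta\tanh\tfrac{2}{g}\xi\,\sin\alpha + 2(m_1\cos^2\vartheta-m_2\sin^2\vartheta)\cos\alpha,
\]
whose first term has a definite sign when \(\xi_\infty<0\). That sign makes \(\alpha\) move away from \(\pm\frac{\pi}{2}\) at a definite rate. In that case \(\cos\alpha\) becomes bounded below on time intervals of definite length, contradicting \(\int\xi'<\infty\).

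If \(\xi_\infty=0\), I instead exploit the fact that \(\xi\) stays strictly negative while approaching zero. I would compare with the constant solution \(\xi\equiv0\) via continuous dependence (proposition~\ref{ElemataryPropertyOfSystem}(ii)), applied to limits of time-translates.

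\textbf{Main obstacle: the \(\omega\)-limit analysis.} I expect this to be the hardest step. Take time-translates \(\gamma(t+t_i)\) converging, by proposition~\ref{ElemataryPropertyOfSystem}(ii) and the uniform derivative bounds, to a limit solution in the closure of \(D\). This limit satisfies \(\xi\equiv\xi_\infty\) constant, so \(\xi'\equiv 0\).

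\begin{itemize}
\item If the limit lies inside \(D\), then \(\cos\alpha\equiv0\), so \(\alpha'\equiv0\). The \(\alpha'\) equation then yields \(m\sin2\vartheta\tanh\frac{2}{g}\xi_\infty=0\), so \(\xi_\infty=0\). In that subcase I must show that the orbit cannot approach the invariant set \(\{\xi=0\}\) from the negative side while \(\xi'>0\). I would attempt this by linearizing in \(\xi\): \(\eta=\xi\) then satisfies a linear equation whose growth prevents approach. This is the delicate part.
\item If the limit lies on the boundary \(\vartheta\in\{0,\frac{\pi}{2}\}\), there \(\alpha'=\pm2m_{1,2}\cos\alpha\), which drives \(\alpha\) toward \(\pm\frac{\pi}{2}\). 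I would rule this case out as in lemma~\ref{lem:2025-10-02-1}, using the fact that \(\vartheta'\) has a sign pushing \(\vartheta\) back toward \(\vartheta^*\).
\end{itemize}
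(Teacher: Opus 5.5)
Your Step~1 and your treatment of $\xi_\infty<0$ agree with the paper. There, at crossings of $\vartheta^*$, the term $m\sin2\vartheta\tanh\frac{2\xi}{g}\sin\alpha$ pushes $\alpha$ off $\pm\frac{\pi}{2}$ and contradicts $\xi'\to0$; the paper uses exactly this to show $\xi(\tilde t_i)\to0$. But the case $\xi_\infty=0$ is the real content of the lemma, and there you only say you \emph{would attempt} a linearization. This is a genuine gap, and a qualitative $\omega$-limit argument cannot close it. The limit you obtain, $\xi\equiv0$, $\alpha\equiv\frac{\pi}{2}$, $\vartheta'=\sin2\vartheta$, is an honest solution of \eqref{eq:2025-10-02-1}, so no contradiction can come from the limit itself. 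Moreover, in the oscillating case $\xi'=\sin2\vartheta$ at critical points of $\vartheta$, so the maxima of $\vartheta$ tend to $\frac{\pi}{2}$. Boundary limits therefore coexist with the interior one and cannot be ``ruled out as in Lemma~\ref{lem:2025-10-02-1}''. What is missing is quantitative control where the curve turns around near $\vartheta=\frac{\pi}{2}$. There $\dd\xi/\dd\vartheta$ blows up and $m_1\cot\vartheta-m_2\tan\vartheta\to-\infty$, so linearizing in $\xi$ is not uniform. The paper supplies this control with Lemma~\ref{lem:2025-10-02-3}. It writes the arcs after $\tilde t_i$ as graphs $\xi_i(\vartheta)$ on $[\vartheta^*,b_i)$ with $\xi_i(\vartheta^*)\uparrow0$, $\xi_i'(\vartheta^*)\to0$ and $\xi_i'\to\infty$ at $b_i$. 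Continuous dependence at the constant solution gives $a_i\to\frac{\pi}{2}$ with $\xi_i'(a_i)=1$, and that lemma then forces $\xi_i(a_i)\ge0$, a contradiction.

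There is a much shorter way to close the gap, and in fact to prove the whole lemma: use the exact nonlinear version of the growth you hoped to get by linearizing. Let $\Phi=\operatorname{sech}^n\frac{2\xi}{g}\,\sin^{m_1}\vartheta\cos^{m_2}\vartheta$, which is the conformal factor of the metric in the remark of Section~\ref{prel}, written in $(\xi,\vartheta)$. Using $2(m_1\cos^2\vartheta-m_2\sin^2\vartheta)=\sin2\vartheta\,(m_1\cot\vartheta-m_2\tan\vartheta)$, the system \eqref{eq:2025-10-02-1} gives
\[\frac{\dd}{\dd t}\bigl(\Phi\cos\alpha\bigr)=-m\,\Phi\,\sin2\vartheta\,\tanh\frac{2\xi}{g},\]
which is positive while $\xi<0$. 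Hence $\Phi\cos\alpha\ge q:=(\Phi\cos\alpha)(t_0)>0$ for as long as $\xi<0$. Since $\Phi\le\sin^{m_1}\vartheta\cos^{m_2}\vartheta\le1$, this gives $\cos\alpha\ge q$ and keeps $\vartheta$ in a compact subinterval of $(0,\frac{\pi}{2})$. So $\xi'\ge c>0$ there, and $\xi$ reaches $0$. A minor point: $\hat\gamma$ reverses time, so it does not transfer your forward-time statement to the other case. Use instead $(\xi,\vartheta,\alpha)\mapsto(-\xi,\vartheta,\pi-\alpha)$, which preserves \eqref{eq:2025-10-02-1} without reversing $t$.
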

	\begin{proof}
		Suppose \(\xi(t_0)<0\),\,\(\xi'(t_0)>0\) and \(\xi(t)<0\) for all \(t >t_0\).
		We shall derive a contradiction.
		Let us first note that $\xi'(t) > 0$ for all $t > t_0$  by lemma~\ref{lem:2025-10-03-1}.
		The boundedness of \(\alpha'(t)\) implies that \(\xi''(t)\) is bounded, and hence \(\lim_{t\to\infty}\xi'(t)=0.\)

		If there is a sequence \(\left\{ t_i\right\}_{i=1}^\infty\) increasing to \(\infty\) such that \(\vartheta'(t_i)=0\), then from \(\lim_{t\to\infty}\xi'(t)=0\) one sees that \[\lim_{i\to\infty}\sin 2 \vartheta(t_i)=0\] necessarily.
		From lemma~\ref{lem:2025-10-03-1} and lemma~\ref{lem:2025-10-02-1}, the existence of such sequence \(\left\{ t_i\right\}_{i=1}^\infty\) implies that existence of the sequence \(\left\{ \tilde t_i\right\}_{i=1}^\infty\) increasing to \(\infty\) such that \(\vartheta(\tilde{t}_i)=\vartheta^*\).
		Note that \(\lim_{i\to\infty}\sin \alpha(\tilde{t}_i)=1\) up to a subsequence, which follows from \(\lim_{t\to}\xi'(t)=0\).
		We shall show that \[\lim_{i\to\infty}\xi(\tilde{t}_i)=0.\]
		Assume \(\lim_{i\to\infty}\xi(\tilde{t}_i)<0\),
		then \(\lim_{i\to\infty}\alpha'(\tilde{t}_i)<2b<0\) for some \(b\).
		The boundedness of \(\alpha'(t)\) implies that \(\alpha''(t)\) is bounded, which gives the existence of \(\varepsilon>0\) such that \(\alpha'(\tilde{t}_i+t)<b<0\) for large \(i\) and \(t\in[0,\varepsilon]\).
		Without loss of generality, we may assume \(\lim_{i\to\infty}\alpha(\tilde{t}_i)=\frac{\pi}{2}\).
		Then \[\alpha(\tilde{t}_i+\varepsilon)<\frac{\pi}{2}+\frac{b \varepsilon}{2}\] for large \(i\).
		By possibly making \(\varepsilon\) smaller, one can ensure that \(\vartheta(\tilde{t}_i+t)\) is bounded away from \(\left\{ 0,\frac{\pi}{2}\right\}\) for all \(i\) and \(t\in[0,\varepsilon]\).
		Combining with \(\lim_{i\to\infty}\sin 2 \vartheta(t_i)=0\) one sees that \(\vartheta'(\tilde{t}_i+t)>0\) for all \(t\in[0,\varepsilon]\) and large \(i\).
		Hence for large \(i\)
		\[0<\alpha(\tilde{t}_i+\varepsilon)<\frac{\pi+b \varepsilon}{2},\]
		which contradicts \(\lim_{t\to\infty}\xi'(t)=0\).

		Now we have proved that \(\lim_{i\to\infty}\xi(\tilde{t}_i)=0\).
		So, without loss of generality, one can find a sequence \(\left\{ \xi_i,\, \vartheta^* \le \vartheta < b_i\right\}\) of solutions of \eqref{Xi} with \(\lim_{\vartheta\to b_i}\xi'_i(\vartheta)=\infty\), \(\lim_{i\to\infty}\xi_i'(\vartheta^*)=0\), \(\xi_i(\vartheta^*)\) increasing to zero and \(\xi_i'(\vartheta) \ge 0\) for \(\vartheta\in[\vartheta^*,b_i)\).
		Note that the solution of \eqref{Xi} with initial condition \(\xi(\vartheta^*)=\xi'(\vartheta^*)=0\) is a constant solution.
		Hence, by the continuous dependence of the solutions to \eqref{eq:2025-10-02-1} on the initial conditions, there exists for large \(i\) a sequence \(a_i\in(\vartheta^*,b_i)\) such that \(\xi_i'(a_i)=1\) and \(\lim_{i\to\infty}a_i=\frac{\pi}{2}\).
		Therefore we arrive at a contradiction by applying lemma~\ref{lem:2025-10-02-3}.

		If \(\vartheta'(t)\ne 0\) for large \(t\), we then assume \(\vartheta'(t)>0\) and \(\alpha(t)\in(0,\frac{\pi}{2})\) for large \(t\) without loss of generality.
		It follows that both \(\xi(t)\) and \(\vartheta(t)\) converge, from which one gets that \(\xi'(t)\) and \(\vartheta'(t)\) converges to zero and then \(\vartheta(t)\) converges to \(\frac{\pi}{2}\).
		Hence from \eqref{eq:2025-10-02-1} one gets \(\alpha'(t)<0\) and then \(\alpha(t)\) is bounded away from \(\frac{\pi}{2}\) for large \(t\).
		Again from \eqref{eq:2025-10-02-1} one obtains that \(\alpha'(t)\) is also bounded away from zero, which is impossible.
		The proof for the other case proceeds analogously.
	\end{proof}
	
	\section{Proof of first item of proposition \ref{prop:2025-10-03-1}}\label{sec5}
	Throughout this section $\xi,\,\vartheta$ and $\alpha$ will denote the components of the solution of \eqref{SystemOfCMC} with initial condition $(\xi,\vartheta,\alpha)(0)=(0,\delta,0)$.
	\begin{lemma}\label{lem:2025-10-01-2}
		Let \(H=0\) and \(i\) be any positive integer.  Then there is a \(\delta_i\in(0,\frac{\pi}{2})\) such that for all \(0<\delta< \delta_i\), one has \(\xi(s)<\frac{2 \vartheta^*}{i}\) on any open interval \(I\) starting at \(0\), as long as \(\vartheta'(s)>0\), \( \vartheta(s) <\vartheta^*\) and \(\alpha'(s)>0\) for \(s\in I\).
	\end{lemma}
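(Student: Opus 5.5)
The idea is to split the interval $I$ at a fixed time $T$, chosen depending only on $i$. Before $T$ we use continuous dependence on the initial data (proposition~\ref{ElemataryPropertyOfSystem}(ii)), comparing with the degenerate limit $\delta=0$. After $T$ we use a direct integral estimate of $\xi$ in terms of $\vartheta$, exploiting the hypotheses $\vartheta'>0$ and $\alpha'>0$ on $I$. Each piece will contribute at most $\vartheta^*/i$.

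\emph{The limit problem.} For the initial datum $(0,0,0)$, the system \eqref{eq:2025-10-02-1} keeps $\xi\equiv 0$ and $\vartheta\equiv 0$, because $\sin 2\vartheta=0$. The remaining equation is $\alpha'=2m_1\cos\alpha$, whose solution $\alpha_0$ with $\alpha_0(0)=0$ increases strictly to $\pi/2$ as $t\to\infty$. Fix $\eta>0$ so small that $\vartheta^*\tan(2\eta)<\vartheta^*/i$. Then choose $T>0$ with $\alpha_0(T)>\pi/2-\eta$. By proposition~\ref{ElemataryPropertyOfSystem}(ii), applied on $[0,T]$, there is $\delta_i>0$ such that for all $0<\delta<\delta_i$ we have
\[
|\xi(t)|<\vartheta^*/i \ \text{ on } [0,T], \qquad \alpha(T)>\pi/2-2\eta .
\]
This yields the bound $\xi<\vartheta^*/i$ on $I\cap[0,T]$, which settles the case $I\subset[0,T]$.

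\emph{Beyond time $T$.} On $I$ we have $\vartheta'=\sin 2\vartheta\sin\alpha>0$, so $\sin\alpha>0$. Since $\alpha$ is continuous and $\alpha(0)=0$, this forces $\alpha\in(0,\pi)$ on $I$. Moreover $\alpha'>0$ on $I$, so for $t\in I$ with $t\ge T$ we get $\alpha(t)\ge\alpha(T)>\pi/2-2\eta$. Because $\vartheta$ is strictly increasing on $I$, it can serve as a parameter, and $\frac{d\xi}{d\vartheta}=\cot\alpha\le\tan(2\eta)$. The bound is trivial where $\alpha\ge\pi/2$, since there $\cot\alpha\le 0$. Integrating from $\vartheta(T)$ to $\vartheta(t)<\vartheta^*$ gives
\[
\xi(t)\le \xi(T)+\vartheta^*\tan(2\eta)<\frac{\vartheta^*}{i}+\frac{\vartheta^*}{i}=\frac{2\vartheta^*}{i}.
\]

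\emph{Expected obstacle.} The delicate point is that continuous dependence controls the solution only on the compact interval $[0,T]$, while $I$ may be long. The resolution is that $T$ must be chosen from the limit solution before $\delta_i$, so that $T$ does not depend on $\delta$. The tail estimate then needs only the monotonicity of $\alpha$ and of $\vartheta$, both of which come directly from the hypotheses on $I$. The one subtlety to handle carefully is excluding the possibility that $\alpha$ leaves $(0,\pi)$; this is handled by the sign of $\sin\alpha$, as above.
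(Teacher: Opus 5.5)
Your proof is correct. It follows the paper for the tail estimate but obtains the short-time behaviour by a different and more direct route.

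\textbf{Short-time part.} The paper works in the arc-length system \eqref{eq:2025-10-01-2}, where $\vartheta=0$ is singular. It therefore rescales by $\delta$ to get \eqref{eq:2025-10-01-3}, solves the $\delta=0$ limit explicitly through the integral formula for $\Theta$, and uses parameter dependence to find a time $T_i\delta$ at which $\alpha>\arctan i$ and $\xi=O(\delta)$. You instead stay in \eqref{eq:2025-10-02-1}, where the factor $\sin 2\vartheta$ has already desingularized the boundary. There $\{\vartheta=0\}$ is an invariant set of a smooth, globally Lipschitz system, and the limit solution is simply $(0,0,\alpha_0)$ with $\alpha_0'=2m_1\cos\alpha_0$. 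Proposition~\ref{ElemataryPropertyOfSystem}(ii) is stated for all initial data in $\mathbb{R}^3$, so it applies directly with no rescaling.

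The two limit problems are in fact the same. Near $\vartheta=\delta(1+\Theta)$ one has $\sin2\vartheta\approx 2\delta(1+\Theta)$, and this time change turns $\Psi'=\frac{m_1}{1+\Theta}\cos\Psi$ into $\alpha'=2m_1\cos\alpha$.

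\textbf{Tail part.} Your bound $\frac{d\xi}{d\vartheta}=\cot\alpha\le\tan 2\eta$ is the paper's tangent-line (concavity of $\xi$ in $\vartheta$) argument made explicit. You also state the point that $\sin\alpha>0$ together with $\alpha(0)=0$ confines $\alpha$ to $(0,\pi)$; the paper leaves this implicit.

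\textbf{Comparison.} Your route is shorter and avoids the $O(\delta)$ bookkeeping. The paper's blow-up gives quantitative information at scale $\delta$, such as $\vartheta(T_i\delta)=\delta+O(\delta)$, which is reused through the Claim in Lemma~\ref{lem:2025-10-02-4}. Your setup also supplies what that later lemma actually needs. Since $\vartheta\to0$ and $\alpha'\to 2m_1\cos\alpha_0\ge 2m_1\cos\alpha_0(T)>0$ uniformly on $[0,T]$, one gets $\alpha'>0$ there and a time with $\alpha=\pi/4$ and $\vartheta$ small.
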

	\begin{proof}
		For the proof of the lemma, it is more convenient to work with the following system of ODE:
		\begin{equation}\label{eq:2025-10-01-2}
		\left\{ \begin{aligned}
			\xi' &=  \cos \alpha,\\
			\vartheta' &=  \sin \alpha,\\
			\alpha' &= m  \tanh \frac{2}{g}\xi \sin \alpha + (m_1 \cot \vartheta - m_2 \tan \vartheta)\cos \alpha,
		\end{aligned}\right.
	\end{equation}
	We use a blow-up analysis, so we introduce the following variables:
	\begin{equation}\nonumber
			\Xi(t)=\frac{1}{\delta} \xi(\delta t),\qquad
			\Theta(t)= \frac{1}{\delta}(\vartheta(\delta t)-\delta),\qquad
			\Psi(t) = \alpha(\delta t).
	\end{equation}
	where \(\delta\in(0,\frac{\pi}{2})\).
	From \eqref{eq:2025-10-01-2}, they satisfies:
	\begin{equation}\label{eq:2025-10-01-3}
		\left\{\begin{aligned}
			\Xi'&= \cos \Psi,\\
			\Theta'&= \sin \Psi,\\
			\Psi'&=  \frac{m_1}{\Theta+1}f(\delta(\Theta+1))\cos \Psi+\delta \left[ m \tanh \left( \frac{2 \delta}{g} \Xi \right)\sin \Psi- m_2 \tan \left( \delta(\Theta+1) \right) \cos \Psi \right],
		\end{aligned}\right.
	\end{equation}
	where \(f(x)=x \cot x\).
	Consider this ODE system with initial conditions:
	\[\Xi(0)=0,\quad \Theta(0)= 0,\quad \Psi(0)=0.\]
	For \(\delta=0\), this initial value problem can be solved explicitly, and one gets that \(\Psi(t)= \arcsin \Theta'(t)\), where \(\Theta(t)\) is the inverse function of
	\[t=\int_{0}^{\Theta}\frac{(1+x)^{m_1}}{\sqrt{(1+x)^{2m_1}-1}}\dd x.\]
	An elementary analysis the explicit solution \(\Psi(t)\) shows that \[\Psi'(t)>0 \text{ for } t\in[0,\infty),\qquad \lim_{t\to \infty} \Psi(t)=\frac{\pi}{2}.\]
	Hence one has the following claim from the parameter dependence of \eqref{eq:2025-10-01-3}:
	
	\vskip 2mm
	\noindent
	\textbf{Claim.}\ {\it For any $i \in \mathbb{N}$, there is a \ $T_i > 0$ and a \ $\delta_i >0$ such that for all $0< \delta < \delta_i$,
	one has $\alpha'(s)>0$ for \(s\in [0, T_i \delta]\), and at $s = T_i\,\delta$,
	$\arctan i < \alpha < \pi/2 $,  $\xi = O(\delta)$ and $\vartheta = \delta + O(\delta)$.}
	\vskip2mm

	By possibly making $\delta_i$ smaller, we can ensure that $$\xi(T_i\,\delta) = O(\delta) <\frac{\vartheta^*}{i} \text{ and } \vartheta(T_i\,\delta) = \delta + O(\delta) <1$$ for $0<\delta < \delta_i$.
	Consider the tangent line $l_{i,\delta} : Y - \vartheta(T_i\,\delta)
	= \tan \theta_\delta(T_i\,\delta)(X - \xi(T_i\,\delta))$ of the curve $(\xi(s),\vartheta(s))$ at $s = T_i\,\delta$.
	Let \(Y= \vartheta^*\) in the equation of $l_{i,\delta}$,
	we get	
	\[X= \frac{1}{\tan \alpha(T_i \delta)}(\vartheta^*-O(\delta)-\delta)+O(\delta)<\frac{2 \vartheta^*}{i},\]
	which completes the proof.
	\end{proof}
	Before proceeding, let us note that on an open interval \(I\) starting at \(0\) where \(\vartheta'(t)>0\), \(\vartheta(t)<\vartheta^*\) and \(\xi'(t)>0\), one has \(\alpha'(t)>0\) for \(t\in I\) from \eqref{eq:2025-10-02-1}.

	\begin{lemma}\label{lem:2025-10-02-4}
		Let \(H=0\) and \(\delta>0\) be small enough, there is time \(T_1>0\) so that \(\xi'(T_1)=0\), \(\vartheta(t)<\vartheta^*\) and \(\vartheta'(t)>0\) for \(t\in(0,T_1]\).
	\end{lemma}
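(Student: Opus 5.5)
We prove the statement by following the trajectory forward from \(t=0\), using the blow-up claim from the proof of lemma~\ref{lem:2025-10-01-2} to get started and lemma~\ref{lem:2025-10-02-1} to force \(\xi'\) to vanish before \(\vartheta\) reaches \(\vartheta^*\). Since the systems \eqref{eq:2025-10-02-1} and \eqref{eq:2025-10-01-2} differ only by the positive factor \(\sin 2\vartheta\) (a time reparametrization), we may work in either and transfer the conclusion.

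\textbf{Step 1: initial behaviour.} At \(t=0\) we have \(\xi=0\), \(\vartheta=\delta<\vartheta^*\) and \(\alpha=0\). Hence \(\vartheta'(0)=0\), while \(\alpha'(0)=2(m_1\cos^2\delta-m_2\sin^2\delta)>0\). So for small \(t>0\) we get \(\alpha\in(0,\pi/2)\), which gives \(\xi'>0\) and \(\vartheta'>0\).

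Let \(T_1\) be the first time at which \(\alpha\) reaches \(\pi/2\), equivalently \(\xi'(T_1)=0\), and let \(\tau\) be the first time at which \(\vartheta\) reaches \(\vartheta^*\). On \((0,\min(T_1,\tau))\) we have \(\alpha\in(0,\pi/2)\), so \(\vartheta'>0\) and \(\xi'>0\). By the remark preceding the lemma, this also gives \(\alpha'>0\) there. It remains to prove that \(T_1\) exists and \(T_1<\tau\). Once that is shown, \(\vartheta<\vartheta^*\) and \(\vartheta'=\sin2\vartheta\sin\alpha>0\) hold on \((0,T_1]\), since \(\sin\alpha(T_1)=1\).

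\textbf{Step 2: \(\tau\) is finite.} Because \(\vartheta'>0\) and \(\vartheta<\vartheta^*\) at some small time, lemma~\ref{lem:2025-10-02-1} shows that \(\vartheta\) eventually reaches \(\vartheta^*\). It therefore suffices to rule out \(\tau\le T_1\), that is, to rule out \(\alpha<\pi/2\) on all of \((0,\tau]\).

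\textbf{Step 3: \(\alpha\) reaches \(\pi/2\) before \(\vartheta\) reaches \(\vartheta^*\).} This is the main step, and the tool is the blow-up claim in the proof of lemma~\ref{lem:2025-10-01-2}. Fix \(i\) with \(\arctan i\) close to \(\pi/2\). For \(\delta<\delta_i\), at \(s=T_i\delta\) (in the parametrization of \eqref{eq:2025-10-01-2}) we have \(\alpha>\arctan i\), \(\xi=O(\delta)\) and \(\vartheta=\delta+O(\delta)\), which is far below \(\vartheta^*\).

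After this time, as long as \(\alpha<\pi/2\) and \(\vartheta<\vartheta^*\), two facts hold:
\begin{itemize}
\item \(\alpha'\ge m\tanh(\tfrac{2}{g}\xi)\sin\alpha>0\), because \(\xi>0\) and \(m_1\cot\vartheta-m_2\tan\vartheta>0\);
\item the stronger bound \(\alpha'\ge (m_1\cot\vartheta-m_2\tan\vartheta)\cos\alpha\) also holds.
\end{itemize}
Thus \(\alpha\) keeps increasing. We argue by contradiction: suppose \(\alpha<\pi/2\) persists until \(\vartheta=\vartheta^*\). Write \(\beta=\pi/2-\alpha\), so that \(\beta'\le -m\tanh(\tfrac{2}{g}\xi)\cos\beta\), using \(\sin\alpha=\cos\beta\). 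Since \(\xi\) stays positive and increasing, \(\tanh(\tfrac{2}{g}\xi)\) is bounded below by a positive constant \(c\) once \(\xi\) exceeds a fixed positive value. Meanwhile \(\vartheta\) must travel a definite distance, roughly \(\vartheta^*/2\), with \(\vartheta'=\cos\beta\le 1\), which takes parameter time at least of order \(\vartheta^*\).

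The intended conclusion is that \(\beta\) starts below \(\pi/2-\arctan i\) and decreases at a rate bounded below, so it hits \(0\) before \(\vartheta\) gains \(\vartheta^*/2\). The obstacle is that \(\xi\) may remain \(O(\delta)\), in which case \(\tanh\) is tiny. We handle this with the \(G\)-term instead. While \(\vartheta\le\vartheta^*/2\), the bound \(\beta'\le -G(\vartheta)\sin\beta\) holds with \(G\ge c'>0\). This gives exponential decay of \(\beta\), but not vanishing in finite time.

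To get finite-time vanishing we combine the two bounds. The \(G\)-term first makes \(\beta\) small. Once \(\xi\) has grown, which happens because \(\xi'=\sin\beta\) is not too small during the early phase, the \(\tanh\) term, whose effect is proportional to \(\cos\beta\approx1\), then drives \(\beta\) to \(0\) in finite time. Balancing these two effects quantitatively is the delicate point, and we would choose \(i\) large to make the constants work.

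An alternative route, which we would try if the estimate above fails, is to use the \(\delta=0\) limit profile directly. In it, \(\Psi\to\pi/2\) only asymptotically, but the \(\delta m\tanh\) correction to \(\Psi'\) is positive of order \(\delta^2\Xi\), with \(\Xi\) growing linearly at first. This gives \(\Psi=\pi/2\) at a rescaled time \(t=o(1/\delta)\), and hence at \(\vartheta\) close to \(\delta\) times something \(o(1/\delta)\), which is below \(\vartheta^*\).
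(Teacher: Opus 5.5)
Your Steps 1 and 2 are fine and match the opening of the paper's proof: \(\alpha'(0)>0\), then lemma~\ref{lem:2025-10-02-1} for the first time at which \(\vartheta=\vartheta^*\) (your \(\tau\), the paper's \(T_2\)). The gap is Step 3. It carries the whole content of the lemma, and you leave it at ``balancing these two effects quantitatively is the delicate point''.

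Your first route cannot close as set up. In the parametrization \eqref{eq:2025-10-01-2}, with \(\beta=\frac{\pi}{2}-\alpha\), one has \(\frac{\dd\beta}{\dd\vartheta}=-m\tanh\frac{2}{g}\xi-G(\vartheta)\cot\alpha\). Here \(\xi\) never becomes large: it tends to \(0\) with \(\delta\) by lemma~\ref{lem:2025-10-01-2}, and in the blow-up picture it is of order \(\delta\) (order \(\delta\ln\frac{1}{\delta}\) if \(m_1=1\)). So the \(\tanh\) term can lower \(\beta\) by only \(O(\sup\xi)\) over the entire range \(\vartheta\le\vartheta^*\).

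Against this, the only smallness you obtain for \(\beta\) is independent of \(\delta\). The blow-up claim gives roughly \(1/i\), but it is a compactness statement with \(\delta_i\) chosen after \(i\), so ``choose \(i\) large'' cannot be tuned to \(\delta\). The bound \(G\ge c'\) gives only a fixed factor such as \(e^{-c'\vartheta^*}\). What is needed is the singular part \(G\approx m_1/\vartheta\) for \(\vartheta\) near \(\delta\). It forces \(\cot\alpha=\frac{\dd\xi}{\dd\vartheta}\) to decay like \((\delta/\vartheta)^{m_1}\), so that it falls below the accumulated \(\tanh\) contribution (of size about \(\delta\vartheta\)) at some \(\vartheta=o(1)\).

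Your second route is the correct heuristic. Balancing \(\delta^2\Xi_\infty t\) against \(t^{-m_1}\) puts the turning point at \(t\approx\delta^{-2/(m_1+1)}\), i.e. \(\vartheta\approx\delta^{(m_1-1)/(m_1+1)}\), with only a logarithmic margin when \(m_1=1\). But it requires control of the rescaled system \eqref{eq:2025-10-01-3} on time intervals whose length blows up as \(\delta\to0\). Continuous dependence on compacta does not provide this, so as written the step is asserted rather than proved.

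The paper supplies exactly this missing estimate as lemma~\ref{lem:2025-09-17-1} and argues by contradiction. Suppose that along a sequence \(\tilde\delta_i\to0\) one had \(\xi_i'>0\) on \((0,T_2)\). Then:
\begin{itemize}
\item \(\alpha_i'>0\) there, so lemma~\ref{lem:2025-10-01-2} gives \(\xi_i(T_2)\to0\);
\item the blow-up claim gives \(T_0\) with \(\alpha_i(T_0)=\frac{\pi}{4}\) and \(a=\vartheta_i(T_0)\to0\);
\item on \([a,\vartheta^*]\) the trajectory is a graph \(\xi(\vartheta)\) solving \eqref{Xi} with \(\xi'(a)=1\), \(\xi'\ge0\), \(\xi(\vartheta^*)>0\) small and \(\xi(a)>0\).
\end{itemize}
Lemma~\ref{lem:2025-09-17-1} forbids the last configuration. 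Its proof is precisely the quantitative balance you were missing. Integrating \(\xi''/(\xi'(1+\xi'^2))\) gives \(\ln\xi'(\frac{a+\vartheta^*}{2})=m_1\ln\sin a+O(1)\). Concavity together with the graph ODE makes \(\xi'(\frac{a+\vartheta^*}{2})\) comparable to \(\xi(\vartheta^*)\), hence \(a^{m_1}\) is comparable to \(\xi(\vartheta^*)\). Then \(\int_a^b\xi'\), where \(G(b)=1/\xi'(b)\), exceeds \(\xi(\vartheta^*)\), which forces \(\xi(a)<0\).
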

	\begin{proof}
		For \(\delta<\vartheta^*\) one has \(\alpha'(0)>0\) and then \(\vartheta'(t)>0\) for small \(t\), which follows that there is a \(T_2>0\) such \(\vartheta(T_2)=\vartheta^*\) by lemma~\ref{lem:2025-10-02-1}.
		Let \(T_2\) be the first such time, then \(\vartheta'(t)>0\) for \(t\in(0,T_2)\) by lemma~\ref{lem:2025-10-03-1}.
		Suppose, for the sake of contradiction, that this lemma is not true.
		Then there is a sequence of initial conditions \(\left\{ \tilde{\delta}_i\right\}\) decreasing to zero such that for the corresponding solutions \((\xi_i, \vartheta_i, \alpha_i)\), we have \(\xi'_i(t)>0\) for \(t\in(0,T_2)\).
		It further implies that \(\alpha'_i(t)>0\) for \(t\in(0,T_2)\) from \eqref{eq:2025-10-02-1}.
		Up to a subsequence, we may assume \(\tilde{\delta}_i<\delta_i\), where \(\delta_i\) are as in lemma~\ref{lem:2025-10-01-2}.
		Then \(\lim_{i\to\infty}\xi_i(T_2)=0\) by lemma~\ref{lem:2025-10-01-2}.
		To proceed let us note that there is a \(T_0\in(0,T_2)\) such that \(\alpha_i(T_0)=\frac{\pi}{4}\) and \(\lim_{i\to\infty}\vartheta_i(T_0)=0\) by the claim in the proof of lemma~\ref{lem:2025-10-01-2}.
		An application of lemma~\ref{lem:2025-09-17-1} now leads to a contradiction, thus completing the proof.
	\end{proof}

	\begin{lemma}\label{lem:2025-10-02-5}
		Let \(H=0\) and \(\delta\) be small enough, then \(\delta\) is type \(1\).
	\end{lemma}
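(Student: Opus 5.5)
Starting from Lemma~\ref{lem:2025-10-02-4}, I follow the curve past the first vertical tangency and show that for small \(\delta\) it returns to \(\{\xi=0\}\) while \(\vartheta\) is still strictly increasing. That is exactly type 1.

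Let \(T_1\) be the first time with \(\xi'(T_1)=0\). There we have \(\vartheta(T_1)<\vartheta^*\), \(\vartheta'>0\) on \((0,T_1]\), and \(\alpha(T_1)=\pi/2\). Before \(T_1\) we had \(\xi'>0\) with \(\xi(0)=0\), so \(\xi(T_1)>0\). By Lemma~\ref{lem:2025-10-03-1}(i), \(\xi''(T_1)<0\), so \(\xi\) starts decreasing: \(\xi(t)>0\) and \(\xi'(t)<0\) just after \(T_1\). Lemma~\ref{lem:2025-10-02-2} then gives a first time \(T>T_1\) with \(\xi(T)=0\).

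The whole content is to show that \(\vartheta'(t)\ne 0\) on \((T_1,T)\). Suppose not, and let \(T_3\in(T_1,T)\) be the first zero of \(\vartheta'\). On \((T_1,T_3)\) we have \(\xi>0\), \(\vartheta'>0\) and \(\xi'\) of fixed sign. The sign of \(\xi'\) cannot change there: at a new zero of \(\xi'\) with \(\xi>0\), Lemma~\ref{lem:2025-10-03-1}(i) forces \(\xi''<0\), which is incompatible with \(\xi'\) passing from negative to zero. Hence on \([T_1,T_3]\) the curve is a graph \(\xi=\xi(\vartheta)\) with \(\xi'\le 0\), and \(\xi'(\vartheta)\to-\infty\) as \(\vartheta\to\vartheta(T_3)\). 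I then split into two cases.

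\emph{Case \(\vartheta(T_3)<\vartheta^*\).} Lemma~\ref{lem:2025-10-03-1}(ii) gives \(\vartheta''(T_3)>0\), which contradicts \(\vartheta'\) decreasing to zero from positive values. So this case is impossible.

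\emph{Case \(\vartheta(T_3)\ge\vartheta^*\).} The graph piece runs from \(\vartheta(T_1)\) up to \(\vartheta(T_3)\ge\vartheta^*\), with \(\xi'=0\) at the left end. I will use the smallness of \(\delta\), through the blow-up estimate of Lemma~\ref{lem:2025-10-01-2}, to show that \(\xi(T_1)\to0\) and \(\vartheta(T_1)\to0\) as \(\delta\to0\).
\begin{itemize}
\item If \(\xi(\vartheta^*)\le 0\) along this graph, the curve has already hit \(\xi=0\) with \(\vartheta'>0\), contradicting \(T_3<T\).
\item If \(\xi(\vartheta^*)>0\), I apply the mirror version of Lemma~\ref{lem:2025-10-02-3}, with \(\xi'\le0\), \(\xi(\vartheta^*)>0\) and \(\xi'(a)=-1\) at a point \(a\) close to \(\pi/2\). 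This yields \(\xi(a)\le0\), again a contradiction.
\end{itemize}
To find such an \(a\) I argue as in the proof of Lemma~\ref{lem:2025-10-02-2}. When \(\xi(\vartheta^*)\) and \(\xi'(\vartheta^*)\) are both small, continuous dependence on initial data, compared against the constant solution through \((\xi,\xi')=(0,0)\) at \(\vartheta^*\), forces the point where \(\xi'=-1\) to lie near \(\pi/2\).

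The main obstacle is making the smallness quantitative uniformly in \(\delta\): I need \(\xi(T_1)\) and \(|\xi|\), \(|\xi'|\) at \(\vartheta^*\) to tend to \(0\) with \(\delta\). For \(\xi(T_1)\) I will use Lemma~\ref{lem:2025-10-01-2}, which applies since \(\alpha'>0\) up to \(T_1\). For the values at \(\vartheta^*\) I will use the mean value argument from the proof of Lemma~\ref{lem:2025-09-17-1}: the graph is concave where \(\xi>0\), and its total drop is bounded by \(\xi(T_1)\).
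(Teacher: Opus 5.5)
Your architecture is the paper's: take the first zero \(T_3\) of \(\vartheta'\) after \(T_1\), rule out \(\vartheta(T_3)<\vartheta^*\) by Lemma~\ref{lem:2025-10-03-1}(ii), get \(\xi(T_1)\to0\) from Lemma~\ref{lem:2025-10-01-2}, and close with the continuous-dependence argument from Lemma~\ref{lem:2025-10-02-2}. That argument produces \(a\) near \(\pi/2\), and then Lemma~\ref{lem:2025-10-02-3} applies (its first case exactly, not a mirror version). The gap is at the step you flag as the main obstacle. That step is showing \(\xi'(\vartheta^*)\to0\), equivalently \(\alpha(T_2)\to\pi/2\) where \(\vartheta(T_2)=\vartheta^*\), and the justification you give does not work.

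From \(\xi''=-(1+\xi'^2)\bigl[m\tanh\frac{2}{g}\xi+G(\vartheta)\xi'\bigr]\) with \(G=m_1\cot\vartheta-m_2\tan\vartheta\), concavity is guaranteed only where \(G\xi'\ge0\), i.e. for \(\vartheta\ge\vartheta^*\). On \((\vartheta(T_1),\vartheta^*)\) you have \(G>0>\xi'\), so the sign of \(\xi''\) is undetermined, and ``concave where \(\xi>0\)'' is unjustified. Even granting concavity, to the left of \(\vartheta^*\) it makes \(|\xi'(\vartheta^*)|\) the largest value of \(|\xi'|\) there, which is the wrong direction for an upper bound. So the mean value argument can only use the right side, giving \(|\xi'(\vartheta^*)|\le\xi(T_1)/h\) for \(h\le\vartheta(T_3)-\vartheta^*\). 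You have no uniform lower bound on \(\vartheta(T_3)-\vartheta^*\). A curve reaching \(\vartheta^*\) almost horizontally would have \(\vartheta(T_3)\) barely above \(\vartheta^*\), and excluding that is essentially the claim you are trying to prove.

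The paper closes this step in the time parametrization, where \(\xi''\) is uniformly bounded. If \(\alpha_i(T_2)\) stayed away from \(\pi/2\), then \(\xi_i'(T_2)=\sin2\vartheta^*\cos\alpha_i(T_2)\) would stay below some \(-2b<0\). Hence \(\xi_i'<-b\) on a fixed-length interval \([T_2-\varepsilon,T_2]\), forcing \(\xi_i\) to drop by more than \(b\varepsilon\). That is impossible, since \(0<\xi_i\le\xi_i(T_1)\to0\) there.

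Alternatively, you can keep the graph picture but work to the left of \(\vartheta^*\) using signs instead of concavity. On \([\vartheta(T_1),\vartheta^*]\) you have \(G\xi'\le0\), so \((\arctan\xi')'\ge-m\tanh\frac{2}{g}\xi\ge-\frac{2m}{g}\xi(T_1)\). Integrating from \(\vartheta(T_1)\), where \(\xi'=0\), gives \(\arctan\xi'(\vartheta^*)\ge-\frac{2m}{g}\vartheta^*\xi(T_1)\to0\). With either fix the rest of your proof goes through.

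Two minor points. Lemma~\ref{lem:2025-10-01-2} controls only \(\xi\), not \(\vartheta(T_1)\), but you never use \(\vartheta(T_1)\to0\). Your sub-case \(\xi(\vartheta^*)\le0\) is vacuous, since \(\xi>0\) on \((T_1,T)\).
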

	\begin{proof}
		For small \(\delta>0\), let \(T_1>0\) be the first time defined in lemma~\ref{lem:2025-10-02-4}.
		Hence \(\xi'(t)>0\) for \(t\in(0,T_1)\) and then by lemma~\ref{lem:2025-10-03-1} \(\xi''(T_1)<0\), which follows that there is a \(T_4>T_1\) such that \(\xi(T_4)=0\) by lemma~\ref{lem:2025-10-02-2}.
		Let \(T_4\) be the first such time, then \(\xi'(t)<0\) for \(t\in(T_1,T_4)\) by lemma~\ref{lem:2025-10-03-1}.
		So it suffices to prove that \(\vartheta'(t)\ne 0\) for \(t\in(T_1,T_4)\) and small enough \(\delta>0\).
		Suppose, by contradiction, that this fails.
		Then there is a sequence of initial conditions \(\left\{ \tilde{\delta}_i\right\}\) decreasing to zero such that, for the corresponding solutions \((\xi_i, \vartheta_i, \alpha_i)\)  and some \(T_3\in(T_1,T_4)\) depending on \(i\), \(\vartheta'_i(T_3)=0\) and \(\vartheta'_i(t)>0\) for all \(t\in(T_1,T_3)\).
		Up to a subsequence,  we may assume \(\tilde{\delta}_i<\delta_i\), where \(\delta_i\) are as in lemma~\ref{lem:2025-10-01-2}.
		Then \(\lim_{i\to\infty}\xi_i(T_1)=0\) by the same lemma.
		To proceed let us note that \(\vartheta_i(T_3)>\vartheta^*\) by lemma~\ref{lem:2025-10-03-1}, and that there is a unique \(T_2\in(T_1,T_3)\) depending on \(i\) such that \(\vartheta_i(T_2)=\vartheta^*\) by lemma~\ref{lem:2025-10-02-4}.
		We shall prove that
		\[\lim_{i\to\infty}\xi_i(T_2)=0,\qquad \lim_{i\to\infty}\alpha_i(T_2)=\frac{\pi}{2}.\]
		For the former, it follows from \(\xi_i(t) \le\xi_i(T_1)\) for all \(t\in(0,T_4)\) and \(\lim_{i\to\infty}\xi_i(T_1)=0\).
		For the latter, we use a contradiction argument by assuming, after passing to a subsequence if necessary, that \(\alpha_i(T_2) \in (\frac{\pi}{2}, \pi)\) and is bounded away from \(\frac{\pi}{2}\) for all \(i\).
		Then one has from \eqref{eq:2025-10-02-1} that \(\xi_i'(T_2)<0\) is bounded away from zero.
		Note that \(\xi''(t)\) is bounded, which gives the existence of sufficiently small \(\varepsilon,b>0\) such that \(\xi_i'(t)<-b\) for all \(t\in [T_2-\varepsilon,T_2]\) and large \(i\).
		This, however, is impossible since \(\xi_i(t) \le \xi_i(T_1)\) for all \(t\in [0,T_2]\) and \(\lim_{i\to\infty}\xi_i(T_1)=0\), so $\lim_{i\to\infty}\alpha_i(T_2)=\frac{\pi}{2}$.
		We can now obtain a contradiction by applying lemma~\ref{lem:2025-10-02-3} using the same reasoning as at the end of the penultimate paragraph in the proof of lemma~\ref{lem:2025-10-02-2}.
	\end{proof}
	This lemma proves proposition \ref{prop:2025-10-03-1}(i).

	\section{Proof of second item of proposition \ref{prop:2025-10-03-1}}\label{sec6}
	In what follows $\xi,\,\vartheta$ and $\alpha$ will denote the components of the solution of \eqref{SystemOfCMC} with initial condition $(\xi,\vartheta,\alpha)(0)=(0,\vartheta^*-\varepsilon,0)$.
	Let us first consider the case of \(g=4\).
	We begin by recalling the system \eqref{Initial} for \(H=0\):
	\begin{equation}\label{eq:2025-10-04-1}
		\left\{ \begin{aligned}
			r'&= \cos \alpha,\\
			\theta'&= \csc r\sin \alpha,\\
			\alpha' &= - n \cot r \sin \alpha+ \frac{g }{2}\left( m_1 \cot \left( \frac{g }{2}\theta\right)-m_2 \tan \left( \frac{g }{2} \theta\right)  \right)\csc r \cos \alpha.
		\end{aligned}\right.
	\end{equation}

	If \((r(s),\theta(s))\) is of the form \((r,\theta(r))\) then from \eqref{eq:2025-10-04-1} one gets that \(\theta(r)\) satisfies:
	\begin{equation}\label{eq:2025-10-04-2}\nonumber
		\cos r \frac{\dd \theta}{\dd r}+ \sin r \frac{ \dd^2 \theta}{\dd r^2} = \left[ 1+ \sin^2 r \left( \frac{\dd \theta}{\dd r} \right)^2 \right]\left[ -n \frac{\dd \theta}{\dd r}\cos r + \frac{g }{2}\left( m_1 \cot \left( \frac{g }{2}\theta\right)-m_2 \tan \left( \frac{g }{2} \theta\right)  \right)\csc r \right].
	\end{equation}
	Linearizing this equation at the constant solution \(\theta=\frac{2 \vartheta^*}{g}\) yields the equation
	\begin{equation}\nonumber
		\sin^2 r w''+(n+1)\cos r \sin r w'+g(n-1)w=0,
	\end{equation}
	Here we have used the fact that \[m_1 \csc^2 \vartheta^*+m_2 \sec^2 \vartheta^*=2(m_1+m_2),\qquad n-1=\frac{g}{2}(m_1+m_2).\]
	Next we note that the above linearized equation for \(g=4\) is equivalent to
	\begin{equation}\label{eq:2025-09-15-3}
		\frac{\dd}{\dd r} \left( \sin^{n+1} r  \frac{\dd w}{\dd r} \right) + 4(n-1) \sin^{n-1} r\,  w = 0.
	\end{equation}
	Consider the following equation, distinct from \eqref{eq:2025-09-15-3}:
	\begin{equation}\label{eq:2025-09-15-2}
		\frac{\dd}{\dd r} \left( \sin^{n} r  \frac{\dd w}{\dd r} \right) + 4(n-1) \sin^{n} r\,  w = 0.
	\end{equation}
	One finds that it can be transformed via the substitution \(x = \cos r\)
	into the following Legendre type equation:
	\begin{equation}\label{eq:2025-09-14-1}
		(1-x^2)\frac{\dd^2w}{\dd x^2}-(n+1)x \frac{\dd w }{\dd x}+4(n-1)w=0.
	\end{equation}

	We need a theorem proved by Leighton \cite{Leighton}:
	\begin{theorem}[Leighton]\label{thm:2025-09-14-1}
		Let \(r_1(x),\,p_1(x),\,r(x)\) and \(p(x)\) be positive and continuous on the interval \([a,b]\), If the derivative \( z'(x) \) of a solution \( z(x) \) of the equation
		\[
		[r_1(x)  z']' + p_1(x)  z = 0
		\]
		has consecutive zeros at \( x = c_1 \) and \( x = c_2 \) (\( a \leq c_1 < c_2 \leq b \)), and if \(r_1\ge r\) and \(p_1\le p\) holds, the derivative \( y'(x) \) of a nonnull solution \( y(x) \) of the system
		\[
		\begin{aligned}
			[r(x)  y']' + p(x)  y &= 0\\
			y'(c_1) &= 0
		\end{aligned}
		\]
		will have a zero on the interval \( (c_1, c_2] \).
	\end{theorem}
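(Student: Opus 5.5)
This is Leighton's comparison theorem, quoted from the literature, but a self-contained proof goes through a Prüfer transformation adapted to zeros of derivatives. For a nonnull solution $y$ of $[r y']'+py=0$, set $u=r y'$. Then $(y,u)$ solves the first-order system
\[
y'=u/r,\qquad u'=-p\,y,
\]
and $(y,u)$ never vanishes simultaneously, since that would force $y\equiv0$. I would introduce the polar angle $\phi$ through $y=\rho\sin\phi$, $u=\rho\cos\phi$. Zeros of $y'$ (equivalently of $u$) then correspond to $\phi\equiv\pi/2 \pmod \pi$. A direct computation gives
\[
\phi'=\frac{\cos^2\phi}{r}+p\,\sin^2\phi ,
\]
and the analogous equation holds for $z$: with $v=r_1 z'$ and angle $\psi$,
\[
\psi'=\frac{\cos^2\psi}{r_1}+p_1\sin^2\psi .
\]
Both right-hand sides are strictly positive because the coefficients are positive. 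So the angles are strictly increasing, and consecutive zeros of $z'$ correspond exactly to $\psi$ increasing by $\pi$.

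Next, I would normalize. Since $z'(c_1)=0$, I may take $\psi(c_1)=\pi/2$. Since $z'(c_2)$ is the next zero, $\psi(c_2)=3\pi/2$. Likewise $y'(c_1)=0$ lets me take $\phi(c_1)=\pi/2$. Now define
\[
F(x,\omega)=\frac{\cos^2\omega}{r(x)}+p(x)\sin^2\omega ,\qquad F_1(x,\omega)=\frac{\cos^2\omega}{r_1(x)}+p_1(x)\sin^2\omega .
\]
The hypotheses $r\le r_1$ and $p\ge p_1$ give $F\ge F_1$ pointwise.

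The standard comparison lemma for scalar first-order ODEs then applies. Here $\phi'=F(x,\phi)$ and $\psi'=F_1(x,\psi)\le F(x,\psi)$, with equal initial values and $F$ Lipschitz in $\omega$ on $[a,b]$. This yields $\phi(x)\ge\psi(x)$ on $[c_1,b]$. The cleanest way to make this rigorous is to write
\[
(\phi-\psi)'=F(x,\phi)-F(x,\psi)+(F-F_1)(x,\psi)\ge L(x)(\phi-\psi),
\]
where $L$ is bounded, and then integrate with an integrating factor. In particular $\phi(c_2)\ge 3\pi/2$.

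Since $\phi$ is continuous and strictly increasing from $\pi/2$ at $c_1$, there is a unique $x\in(c_1,c_2]$ with $\phi(x)=3\pi/2$. At that point $u=\rho\cos\phi=0$, hence $y'(x)=0$. This is the required zero on $(c_1,c_2]$.

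The main technical point is the comparison step, which needs only routine care. The hypotheses give the non-strict inequality $F\ge F_1$, so one should not expect strict ordering; the integrating-factor argument above avoids this issue and yields exactly the closed endpoint $c_2$ in the conclusion. Positivity and continuity of the coefficients guarantee that $F$ is continuous, Lipschitz in $\omega$, and that the angles are monotone, so no further difficulty arises.
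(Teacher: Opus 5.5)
Your argument is correct and complete. The Pr\"ufer equation $\phi'=\cos^2\phi/r+p\sin^2\phi$ is right. Positivity of $r_1,p_1$ makes $\psi$ strictly increasing, so $\psi(c_1)=\pi/2$ together with consecutiveness forces $\psi(c_2)=3\pi/2$. The hypotheses give $F\ge F_1$. The integrating-factor argument for $w=\phi-\psi$ (with $w'\ge Lw$ and $w(c_1)=0$) gives $\phi(c_2)\ge 3\pi/2$, and the intermediate value theorem then produces a zero of $y'$ in $(c_1,c_2]$.

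Three small points are worth making explicit. First, take $L(x)=\int_0^1\partial_\omega F\bigl(x,\psi+t(\phi-\psi)\bigr)\,dt$; this is continuous, not merely bounded, so the integrating factor is legitimate. Second, $z$ is tacitly nonnull, since otherwise ``consecutive zeros'' of $z'$ is meaningless and $\psi$ is undefined. Third, with merely continuous $r$, a solution means $y\in C^1$ with $ry'\in C^1$. This is exactly what makes $(y,u)$ a $C^1$ solution of your first-order system and $\phi$ a $C^1$ angle.

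The paper itself gives no proof. The statement is quoted from Leighton and used as a black box, to compare the linearized equation for $g=4$ with the auxiliary Legendre-type equation.

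For comparison, the classical route substitutes $u=ry'$. This turns $[ry']'+py=0$ into the reciprocal equation $(u'/p)'+u/r=0$, which is where $p>0$ enters. It also turns zeros of $y'$ into zeros of $u$, and swaps the roles of the coefficients, since $1/p_1\ge 1/p$ and $1/r_1\le 1/r$. The claim then becomes the Sturm--Picone comparison theorem for zeros of solutions. Your angle $\phi$ is, up to a shift by $\pi/2$, precisely the ordinary Pr\"ufer angle of this reciprocal equation, so the two proofs coincide in substance.

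What your version buys is self-containedness. It proves the needed comparison directly rather than invoking Sturm--Picone. The integrating-factor step also handles the non-strict inequalities without a separate treatment of the equality case, which is why the closed endpoint $c_2$ comes out naturally.
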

	To apply this theorem, we need the following lemma:
	\begin{lemma}\label{lem:2025-09-14-2}
		Let \(n \ge 5\) and \(\varphi(x)\) be a solution of \eqref{eq:2025-09-14-1} satisfying \(\varphi(0)=1\) and \(\varphi'(0)=0\).
		Then \(\varphi'(x)\) has a zero in \((0,1)\).
	\end{lemma}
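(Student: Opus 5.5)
The plan is to pass to the derivative $v=\varphi'$, which satisfies a Gegenbauer-type equation of its own, and then use a Sturm comparison against the explicit solution $u(x)=x$ of a neighbouring equation. The hypothesis $n\ge 5$ should enter exactly as the strict inequality between the two eigenvalue constants.

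\emph{Step 1: structure of $\varphi$.} Equation \eqref{eq:2025-09-14-1} is invariant under $x\mapsto -x$. Hence $\varphi$, having $\varphi(0)=1$ and $\varphi'(0)=0$, is even, and $v=\varphi'$ is odd with $v(0)=0$. From the equation, $\varphi''(0)=-4(n-1)<0$, so $v<0$ on a right neighbourhood of $0$. Differentiating \eqref{eq:2025-09-14-1} once gives
\[
(1-x^2)v''-(n+3)x\,v'+(3n-5)\,v=0,
\]
which in self-adjoint form reads
\[
\bigl((1-x^2)^{\frac{n+3}{2}}v'\bigr)'+(3n-5)(1-x^2)^{\frac{n+1}{2}}v=0 .
\]
The same operator with constant $n+3$ in place of $3n-5$ has the solution $u(x)=x$. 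This is the degree-one Gegenbauer polynomial, since $1\cdot(1+n+2)=n+3$. We have $3n-5>n+3$ exactly when $n>4$. So for $n\ge5$ the equation for $v$ is strictly ``more oscillatory'' than the one solved by $u$. Note that for $n=4$ the solution $\varphi$ is the quadratic Gegenbauer polynomial, whose derivative vanishes only at $0$, so the hypothesis is sharp.

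\emph{Step 2: Picone/Wronskian argument.} Suppose, for contradiction, that $v<0$ on $(0,1)$. Set $p=(1-x^2)^{(n+3)/2}$ and $q=(1-x^2)^{(n+1)/2}$. Integrating the Wronskian identity from $0$ to $x<1$ gives
\[
p(x)\bigl(u'v-uv'\bigr)(x)=(3n-5-(n+3))\int_0^x q\,u\,v\,\dd t<0 ,
\]
since the boundary term at $0$ vanishes. Thus $v-xv'<0$, i.e.\ $(v/x)'>0$ on $(0,1)$. Moreover the quantity $-p\,(v-xv')$ is increasing and positive. In particular it is bounded below by a positive constant on $[1/2,1)$.

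\emph{Step 3: the endpoint $x=1$.} This is the main obstacle. The point $x=1$ is a regular singular point of the $v$-equation, with Frobenius exponents $0$ and $-(n+1)/2$. I would write $v=A v_{\mathrm{reg}}+B v_{\mathrm{sing}}$ near $1$ and treat the two cases separately.

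If $B=0$, then $v$ and $v'$ stay bounded, so $p(v-xv')\to0$ as $x\to1$. This contradicts the positive lower bound from Step 2, and so gives the desired sign change.

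If $B\ne0$, then $p\,v'\sim$ const$\cdot(1-x)^{(n+3)/2-(n+1)/2-1}$, so $p(v-xv')$ tends to a nonzero constant and the comparison alone does not close. I would try first to rule this case out, or to exploit it. One option is the explicit representation
\[
\varphi(x)={}_2F_1\!\left(-\tfrac{k}{2},\tfrac{k+n}{2};\tfrac12;x^2\right),\qquad k(k+n)=4(n-1),\quad k>2,
\]
together with Gauss's connection formula, to read off the sign of the singular coefficient of $\varphi$ at $x=1$. That sign is governed by $\Gamma(-k/2)$, where $-k/2\in(-\infty,-1)$. The aim is to show that this sign forces $\varphi'>0$ near $1$, since the singular part of $\varphi$ behaves like $(1-x)^{(1-n)/2}$ and its derivative then dominates. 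Combined with $\varphi'<0$ near $0$, this would produce the required zero of $\varphi'$ in $(0,1)$.

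Should the Gamma-sign bookkeeping fail, my fallback is to replace $u=x$ by a comparison function adapted to the singular behaviour at $x=1$. I would then apply Leighton's Theorem~\ref{thm:2025-09-14-1} on a compact subinterval $[0,1-\eta]$, choosing $\eta$ so that the comparison solution's derivative already vanishes there.
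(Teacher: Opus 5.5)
\paragraph{Comparison with the paper.} Your route differs from the paper's. The paper works entirely at $x=0$. It differentiates \eqref{eq:2025-09-14-1} repeatedly and computes the even Taylor coefficients of $\varphi$ up to order $10$. From the equation satisfied by $\varphi^{(8)}$ it shows $\varphi^{(9)}>0$ on $(0,1)$. It then deduces the explicit bound
\[
\varphi'(x)>4(n-1)x\bigl[-1+\frac{1}{3}(n-4)x^2+\frac{1}{3}(n-4)x^4+\frac{1}{63}(n+20)(n-4)x^6\bigr],
\]
whose bracket is positive near $x=1$ once $n\ge5$ (the margin is only $4/63$ at $n=5$). Your Steps 1 and 2 are correct. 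They explain the threshold conceptually, via the constant $3n-5$ against the degree-one Gegenbauer value $n+3$, and they show sharpness at $n=4$. The price is that you must control the singular endpoint $x=1$, and that is where your proposal is unfinished. For $B\neq0$ you assert that the comparison alone does not close. You then offer the connection formula with the sign of $\Gamma(-k/2)$, or a Leighton fallback, but carry out neither. As written, the proof is therefore incomplete.

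\paragraph{How to close the gap.} That assertion is wrong, and the missing step follows in one line from Step 2. Under the hypothesis $v<0$ on $(0,1)$, Step 2 gives $(v/x)'>0$, hence $2v(1/2)\le v(x)<0$ for $x\in[1/2,1)$. So $v$ is bounded near $1$, which excludes $B\neq0$, because $v_{\mathrm{sing}}$ blows up like $(1-x)^{-(n+1)/2}$. Equivalently, $v<0$ forces $B<0$. Then $pv'\to 2^{(n+3)/2}\frac{n+1}{2}B<0$, so $p(v-xv')$ tends to a positive constant, contradicting $p(v-xv')<0$. You computed the size of this limit but not its sign.

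You can even bypass Frobenius. Boundedness of $v$ gives $pv\to0$. Combined with $xpv'-pv\ge c>0$ on $[1/2,1)$, this yields $v'\ge\frac{c}{2}(1-x^2)^{-(n+3)/2}$ near $1$. That bound is not integrable at $1$, so $v\to+\infty$, a contradiction.

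If you did pursue the Gamma route, note that $-k/2\in(-\infty,-1)$ does not determine the sign of $\Gamma(-k/2)$, since $\Gamma$ alternates sign there. You need $2<k<4$, which holds for $n>4$ because $k^2+nk-4(n-1)$ is negative at $2$ and positive at $4$. Then $-k/2\in(-2,-1)$, where $\Gamma>0$.
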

	\begin{proof}
		Taking derivatives of \eqref{eq:2025-09-14-1} we have the following second order differential equations:
		\begin{align}
			(1-x^2)w'''-(n+3)xw''+(3n-5)w'&=0,\nonumber\\
			(1 - x^2)w^{(4)} - (n + 5)xw''' + (2n - 8)w'' &= 0,\nonumber\\
			(1 - x^2)w^{(5)} - (n + 7)xw^{(4)} + (n - 13)w''' &= 0,\nonumber\\
			(1 - x^2)w^{(6)} - (n + 9)xw^{(5)} - 20w^{(4)} &= 0,\nonumber\\
			(1 - x^2)w^{(7)} - (n + 11)x w^{(6)} - (n + 29)w^{(5)} &= 0,\nonumber \\
			(1 - x^2)w^{(8)} - (n + 13)x w^{(7)} - 2(n + 20)w^{(6)} &= 0,\nonumber \\
			(1 - x^2)w^{(9)} - (n + 15)x w^{(8)} - (3n + 53)w^{(7)} &= 0,\nonumber\\			
			(1 - x^2)w^{(10)} - (n + 17)x w^{(9)} - 4(n + 17)w^{(8)} &= 0.\label{eq:2025-09-15-1}
		\end{align}
		Combining with \(\varphi(0)=1\) and \(\varphi'(0)=0\) one gets that
		\[\varphi'''(0)=\varphi^{(5)}=\varphi^{(7)}=\varphi^{(9)}=0,\quad  \varphi''(0)=-4(n-1),\quad \varphi^{(4)}(0)=8(n-4)(n-1),\]
		\[\varphi^{(6)}(0)=160(n-4)(n-1),\quad \varphi^{(8)}(0) = 320(n + 20)(n - 4)(n - 1),\]
		\[\varphi^{(10)}(0) = 1280 (n + 17)(n + 20)(n - 4)(n - 1).\]
		Then an elementary analysis on \eqref{eq:2025-09-15-1} gives that
		\[\varphi^{(9)}(x)>0\] for \(x\in (0,1)\), from which one gets that
		\[\varphi'(x)> 4(n-1)x \left[ -1 + \frac{1}{3}(n-4)x^2 + \frac{1}{3}(n-4)x^4 + \frac{1}{63}(n+20)(n-4)x^6 \right]\]
		for \(x\in (0,1)\).
		This proves the lemma for \(n \ge 5\).
	\end{proof}
	Since the solution of \eqref{eq:2025-09-14-1} satisfying \(\frac{\dd w}{\dd x}(0)=0\) is even, the lemma above shows that the derivative of the solution to \eqref{eq:2025-09-15-2} satisfying \(w(\frac{\pi}{2})=1,\,\frac{\dd w}{\dd r}(\frac{\pi}{2})=0\) has a zero on the interval \((\frac{\pi}{2},\pi)\).
	Now we apply theorem \ref{thm:2025-09-14-1} to \eqref{eq:2025-09-15-3} and \eqref{eq:2025-09-15-2} to obtain:
	\begin{lemma}\label{thm:2025-09-15-1}
		The derivative \(\frac{\dd w}{\dd r}\) of the solution to \eqref{eq:2025-09-15-3} satisfying \(w(\frac{\pi}{2})=1,\,\frac{\dd w}{\dd r}(\frac{\pi}{2})=0\) has a consecutive zero \(c\) on the interval \((\frac{\pi}{2},\pi)\).
		Moreover, \(\frac{\dd^2 w }{\dd r^2}(c)>0\).
	\end{lemma}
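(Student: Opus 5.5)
We apply Leighton's theorem (Theorem~\ref{thm:2025-09-14-1}) on $[a,b]=[\frac{\pi}{2},b]$ with $b<\pi$ chosen slightly beyond the zero produced by Lemma~\ref{lem:2025-09-14-2}. The comparison equation is \eqref{eq:2025-09-15-2}, with $r_1=\sin^n r$ and $p_1=4(n-1)\sin^n r$. The target equation is \eqref{eq:2025-09-15-3}, with $r=\sin^{n+1} r$ and $p=4(n-1)\sin^{n-1} r$. On $(0,\pi)$ we have $0<\sin r\le 1$, hence
\[
\sin^{n} r \ge \sin^{n+1} r \quad\text{and}\quad 4(n-1)\sin^{n} r\le 4(n-1)\sin^{n-1} r .
\]
These are exactly the hypotheses $r_1\ge r$ and $p_1\le p$, and all four coefficients are positive and continuous on any compact subinterval of $(0,\pi)$.

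Next we produce the consecutive zeros $c_1<c_2$ of $z'$ for the comparison problem. Let $z$ be the solution of \eqref{eq:2025-09-15-2} with $z(\frac{\pi}{2})=1$ and $z'(\frac{\pi}{2})=0$. Under $x=\cos r$ this corresponds to the solution $\varphi$ of \eqref{eq:2025-09-14-1} with $\varphi(0)=1$ and $\varphi'(0)=0$, since $\frac{dz}{dr}=-\sin r\,\varphi'(\cos r)$ and $\sin r\neq0$. By Lemma~\ref{lem:2025-09-14-2}, together with the evenness of $\varphi$, $\varphi'$ vanishes at some $x_0\in(-1,0)$. Take $x_0$ to be the zero closest to $0$, so $c_2=\arccos x_0\in(\frac{\pi}{2},\pi)$ is the next zero of $z'$ after $c_1=\frac{\pi}{2}$. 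Zeros of $\varphi'$ cannot accumulate at $0$, because $\varphi''(0)=-4(n-1)\neq 0$, so this closest zero exists. Setting $b=c_2$, Leighton's theorem, applied to the solution $w$ of \eqref{eq:2025-09-15-3} with $w'(\frac{\pi}{2})=0$ and $w\not\equiv0$, gives a zero of $w'$ in $(\frac{\pi}{2},c_2]$. Let $c$ be the first zero of $w'$ after $\frac{\pi}{2}$; this is the consecutive zero claimed.

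For the sign of $w''(c)$ we use \eqref{eq:2025-09-15-3} itself. At $\frac{\pi}{2}$ it gives $w''(\frac{\pi}{2})=-4(n-1)<0$, so $w'<0$ on $(\frac{\pi}{2},c)$ and $w$ is strictly decreasing there. At $c$, expanding \eqref{eq:2025-09-15-3} gives
\[
\sin^{n+1}c\,w''(c)=-4(n-1)\sin^{n-1}c\,w(c).
\]
Thus $w''(c)>0$ if and only if $w(c)<0$. Suppose instead $w(c)\ge0$. Then $w>0$ on $[\frac{\pi}{2},c)$, because $w$ is strictly decreasing from $1$ down to $w(c)\ge 0$. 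Integrating $(\sin^{n+1}w')'=-4(n-1)\sin^{n-1}w$ from $\frac{\pi}{2}$ to $c$ gives
\[
0=\sin^{n+1}c\,w'(c)-w'(\tfrac{\pi}{2})=-4(n-1)\int_{\pi/2}^{c}\sin^{n-1}r\,w\,dr<0,
\]
which is a contradiction. Hence $w(c)<0$ and $w''(c)>0$. Note also that $w''(c)\neq0$, since otherwise $w(c)=w'(c)=0$ would force $w\equiv0$.

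The step that needs the most care is checking that Leighton's hypotheses are met on a compact interval strictly inside $(0,\pi)$ and that $c_1,c_2$ really are consecutive zeros. Both follow from $c_2<\pi$, which Lemma~\ref{lem:2025-09-14-2} provides. That lemma requires $n\ge5$, which we assume here as it does; smaller $n$ for $g=4$ would have to be handled separately.
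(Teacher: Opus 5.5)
Your proof is correct and follows the paper's route: Leighton's theorem comparing \eqref{eq:2025-09-15-3} with \eqref{eq:2025-09-15-2}, with the consecutive zeros of $z'$ supplied by Lemma~\ref{lem:2025-09-14-2} and the evenness of $\varphi$, while your integration argument for $\frac{\dd^2 w}{\dd r^2}(c)>0$ fills in a step the paper leaves implicit. No separate treatment of small $n$ is needed, because for $g=4$ one has $n-1=2(m_1+m_2)\ge 4$, so $n\ge 5$ always holds.
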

	This immediately gives:
	\begin{lemma}\label{lem:2025-10-04-1}
		Let \(H=0\), \(g=4\) and \(\varepsilon>0\) be small enough, then \(\vartheta^*-\varepsilon\) is not type \(1\).
	\end{lemma}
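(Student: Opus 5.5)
We argue by linearization at the constant solution \(\theta\equiv \frac{2\vartheta^*}{g}\), i.e.\ \(\vartheta\equiv\vartheta^*\). Starting from \((\xi,\vartheta,\alpha)(0)=(0,\vartheta^*-\varepsilon,0)\) means, in the variables of \eqref{eq:2025-10-04-1}, that the profile curve starts at \(r=\frac{\pi}{2}\) (since \(\xi=0\)) with \(\theta(\frac{\pi}{2})=\frac{2}{g}(\vartheta^*-\varepsilon)\) and \(\frac{\dd\theta}{\dd r}(\frac{\pi}{2})=0\). As long as the curve is a graph \(\theta(r)\), we write
\[
\theta(r)=\frac{2\vartheta^*}{g}-\frac{2\varepsilon}{g}\,w_\varepsilon(r),\qquad w_\varepsilon(\tfrac{\pi}{2})=1,\quad w_\varepsilon'(\tfrac{\pi}{2})=0 .
\]
By smooth dependence on parameters of the graph equation for \(\theta(r)\), the functions \(w_\varepsilon\) converge in \(C^2\) on compact subsets of \((0,\pi)\) to the solution \(w\) of the linearized equation \eqref{eq:2025-09-15-3} with the same initial data. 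The point is that the perturbation is \(O(\varepsilon^2)\) after dividing by \(\varepsilon\).

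By Lemma~\ref{thm:2025-09-15-1}, \(w'\) has a first zero \(c\in(\frac{\pi}{2},\pi)\) with \(w''(c)>0\). Since this zero is simple and \(c\) lies inside \((0,\pi)\), for \(\varepsilon\) small the graph exists on \([\frac{\pi}{2},c+\eta]\), and \(w_\varepsilon'\) has a zero \(c_\varepsilon\to c\). Moreover, \(w_\varepsilon'\neq 0\) on \((\frac{\pi}{2},c_\varepsilon)\). Near \(\frac{\pi}{2}\) this needs care: since \(w''(\frac{\pi}{2})=-4(n-1)<0\), we get \(w_\varepsilon'<0\) on \((\frac{\pi}{2},\frac{\pi}{2}+\eta]\) for all small \(\varepsilon\), and uniform convergence handles the rest. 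Hence \(\frac{\dd\theta}{\dd r}\neq0\) on \((\frac{\pi}{2},c_\varepsilon)\) and vanishes at \(c_\varepsilon\), with \(\theta\) close to \(\frac{2\vartheta^*}{g}\), so the curve stays away from the singular boundaries. Also \(r\in(\frac{\pi}{2},c_\varepsilon]\), so \(r\neq\frac{\pi}{2}\), i.e.\ \(\xi\neq0\).

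Next we translate back to the \(t\)-parametrization of \eqref{SystemOfCMC}, which differs only by a monotone reparametrization. First we check the direction of travel: \(\alpha'(0)=2(m_1\cos^2\vartheta-m_2\sin^2\vartheta)>0\) since \(\vartheta(0)<\vartheta^*\), so \(\vartheta\) increases initially. Along the graph, \(r'=\cos\alpha\neq0\) on this arc because it is a graph, and \(r\) increases from \(\frac{\pi}{2}\); indeed \(r'(0)=\cos 0>0\). Thus there is \(T>0\) with \(\vartheta'(T)=0\) and \(\xi(t)\neq0\) on \((0,T]\), the latter because \(r>\frac{\pi}{2}\) there. This is precisely type 2 behaviour. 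To conclude that \(\vartheta^*-\varepsilon\) is not type~1, we also need that \(\xi\) has no zero before \(\vartheta'\) vanishes, which the argument above provides. Since the three types are mutually exclusive in their first events, \(\vartheta^*-\varepsilon\) is type 2, and in particular not type 1.

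The main obstacle is making the linearization limit rigorous uniformly up to \(c+\eta\) and near the initial point where \(w_\varepsilon'\) starts at zero. This is handled by the \(C^2\) convergence together with the strict signs \(w''(\frac{\pi}{2})<0\) and \(w''(c)>0\). One further subtlety is matching the sign conventions between \(\xi=\frac{g}{2}\ln\tan\frac r2\) and \(r\): \(\xi\) increasing corresponds to \(r\) increasing. The analysis is symmetric under \(\hat\gamma\) in any case.
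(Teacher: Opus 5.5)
Your proposal is correct and is essentially the paper's argument. You pass to the $(r,\theta)$ system \eqref{eq:2025-10-04-1}, linearize at $\theta\equiv 2\vartheta^*/g$, and use Lemma~\ref{thm:2025-09-15-1} (the first zero $c$ of $w'$, with $w''(c)>0$) to get $\vartheta'=0$ at some $r=c_\varepsilon>\frac{\pi}{2}$ while $\xi>0$; your write-up supplies the convergence and sign details that the paper leaves implicit.

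One small correction: types 1 and 2 are not mutually exclusive in general, since Proposition~\ref{prop:2025-10-03-2} says $\delta^*$ is both. What rules out type 1 here is your observation that $\xi\neq0$ on all of $(0,T]$, including $t=T$.
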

	\begin{proof}
		If we can prove that for \(\varepsilon>0\) small enough there is a \(T(\varepsilon)\) such that \(\vartheta_\varepsilon'(T)<0\) and \(\xi_\varepsilon'(t)>0\) on \([0,T]\), then the lemma follows.
		To this end, we turn to system \eqref{eq:2025-10-04-1}, where the corresponding conclusion follows from lemma~\ref{thm:2025-09-15-1}.
		This completes the proof.	
	\end{proof}
	For \(g=1,2,3,6\), we employ a proof by contradiction.
	So we assume that the initial height \(\vartheta^*-\varepsilon\) is of type 1 for the remainder of this section.
	Note that the following proof also applies to the cases \(g=4\).
	\begin{lemma}\label{lem:2025-12-04-1}
		Let \(H=0\). If the initial height \(\vartheta^*-\varepsilon\) is type 1, then there is a unique \(T(\varepsilon)>0\) and  a unique \(T_2(\varepsilon)<T\) such that \(\xi(T)=0\) and \(\xi'(T_2)=0\), while \(\xi'(t)>0\), \(\vartheta'(t)>0\) for all \(t\in(0,T_2)\) and \(\xi'(t)<0\), \(\vartheta'(t)>0\) for all \(t\in(T_2,T)\).
	\end{lemma}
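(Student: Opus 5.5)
The plan is to take $T$ to be the \emph{first} positive zero of $\xi$, and then read off the sign pattern of $\vartheta'$ and $\xi'$ from the initial data and Lemma~\ref{lem:2025-10-03-1}(i).

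\textbf{Choice of $T$.} By Definition~\ref{def:10-25-4}(i), type 1 gives some $\tilde T>0$ with $\xi(\tilde T)=0$ and $\vartheta'\neq 0$ on $(0,\tilde T)$. At $t=0$ we have $(\xi,\vartheta,\alpha)=(0,\vartheta^*-\varepsilon,0)$, so $\xi'(0)=\sin 2\vartheta(0)>0$. Hence $\xi>0$ on some interval $(0,\eta)$, and the set of positive zeros of $\xi$ has a positive infimum. Set
\[
T:=\min\{t>0 : \xi(t)=0\}\le \tilde T .
\]
This is the $T$ of the statement, and it is unique as the first return of $\xi$ to zero. By construction $\xi>0$ on $(0,T)$.

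\textbf{Sign of $\vartheta'$.} Since $T\le\tilde T$, we have $\vartheta'\neq0$ on $(0,T)$. From \eqref{eq:2025-10-02-1},
\[
\alpha'(0)=2\bigl(m_1\cos^2\vartheta(0)-m_2\sin^2\vartheta(0)\bigr)>0,
\]
because $\vartheta(0)<\vartheta^*$ and $m_1\cos^2\vartheta-m_2\sin^2\vartheta$ vanishes only at $\vartheta^*$ and is positive below it. So $\alpha>0$ and $\vartheta'=\sin2\vartheta\sin\alpha>0$ for small $t>0$. By continuity and nonvanishing, $\vartheta'>0$ on all of $(0,T)$.

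\textbf{Zeros of $\xi'$.} Since $\xi(0)=\xi(T)=0$, Rolle's theorem gives at least one zero of $\xi'$ in $(0,T)$. At any such zero $\xi>0$, so Lemma~\ref{lem:2025-10-03-1}(i) gives $\xi''<0$. Note that $\xi\not\equiv0$, because $\xi'(0)>0$. Hence every critical point of $\xi$ in $(0,T)$ is a nondegenerate strict local maximum.

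Suppose there were two consecutive such critical points $t_1<t_2$. Just after $t_1$ we have $\xi'<0$, and just before $t_2$ we have $\xi'>0$. So $\xi'$ must vanish somewhere in $(t_1,t_2)$, contradicting consecutiveness. (Equivalently, $\xi$ restricted to $[t_1,t_2]$ attains its minimum at an interior critical point, which is impossible since every critical point is a strict local maximum.) Thus $\xi'$ has exactly one zero $T_2\in(0,T)$.

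Since $\xi'(0)>0$ and $\xi'$ changes sign at $T_2$ (as $\xi''(T_2)<0$), we get $\xi'>0$ on $(0,T_2)$ and $\xi'<0$ on $(T_2,T)$. Together with $\vartheta'>0$ on $(0,T)$, this is exactly the claimed picture.

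\textbf{Main obstacle.} The only delicate point is the bookkeeping around the choice of $T$. The type-1 time $\tilde T$ need not be the first zero of $\xi$. Replacing it by the first zero $T$ keeps the condition $\vartheta'\neq0$ on $(0,T)$, and it also gives $\xi>0$ on $(0,T)$, which is what makes Lemma~\ref{lem:2025-10-03-1}(i) applicable at every critical point. Everything after that is elementary.
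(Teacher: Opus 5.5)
Your proof is correct and follows essentially the same route as the paper. You take $T$ to be the first positive zero of $\xi$ and get $\vartheta'>0$ on $(0,T)$ from $\alpha'(0)>0$ together with nonvanishing; Rolle gives $T_2$, and Lemma~\ref{lem:2025-10-03-1}(i), which makes every critical point of $\xi$ in $(0,T)$ a strict local maximum, gives its uniqueness and the sign pattern of $\xi'$. You just spell out details the paper leaves implicit, namely why $T$ can be chosen as the first zero and the minimum-on-$[t_1,t_2]$ argument.
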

	\begin{proof}
		Assuming that the initial height \(\vartheta^*-\varepsilon\) is of type 1. Then by definition one sees that there is a unique \(T>0\) for which \(\xi(T)=0\) and \(\xi(t)\ne0\), \(\vartheta'(t)\ne 0\) for all \(t\in(0,T)\).
		Since \(\vartheta^*-\varepsilon<\vartheta^*\), one finds that \(\xi'(t)>0\) and \(\vartheta'(t)>0\) for small \(t>0\).
		This gives that \(\vartheta'(t)>0\) for all \(t\in(0,T)\).
		Moreover, \(\xi(t)\) must go through an extremum before it can go back to \(0\) and there is a \(T_2<T\) so that \(\xi'(T_2)=0\).
		The uniqueness of \(T_2\), which gives that \(\xi'(t)>0\) for all \(t\in(0,T_2)\) and that \(\xi'(t)<0\) for all \(t\in(T_2,T)\), follows by lemma~\ref{lem:2025-10-03-1}.
	\end{proof}

	\begin{lemma}\label{lem:2025-12-04-2}
		Let \(H=0\). Given \(c>0\) be small enough and \(\vartheta(T_2)<\vartheta^*+c\), where \(T_2\) is as lemma~\ref{lem:2025-12-04-1}.
		Then for \(\varepsilon\) small enough, there is a unique \(T_1<T_2\) such that \(\vartheta'(T_1)/\xi'(T_1)=1\) and \(\xi(T_1)\to\infty\) as \(\varepsilon\to0\).
	\end{lemma}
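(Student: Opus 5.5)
The plan is to compare the orbit with the constant solution \(\vartheta\equiv\vartheta^*\) of \eqref{eq:2025-10-02-1}. That solution passes through \((0,\vartheta^*,0)\) and is given by \(\xi(t)=t\sin 2\vartheta^*\), \(\vartheta\equiv\vartheta^*\), \(\alpha\equiv 0\). By lemma~\ref{lem:2025-12-04-1}, on \((0,T_2)\) we have \(\xi'>0\) and \(\vartheta'>0\), so \(\alpha(t)\in(0,\frac{\pi}{2})\) there. Also \(\alpha(0)=0\) and \(\alpha(T_2)=\frac{\pi}{2}\), since \(\xi'(T_2)=0\) and \(\vartheta'(T_2)\ge 0\), up to the irrelevant \(2k\pi\) ambiguity of proposition~\ref{SymmetryOfSystemOfCMC}. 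The condition \(\vartheta'(T_1)/\xi'(T_1)=1\) is therefore exactly \(\alpha(T_1)=\frac{\pi}{4}\), and existence of such \(T_1\) follows from the intermediate value theorem. The real content is uniqueness together with \(\xi(T_1)\to\infty\).

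\textbf{Step 1: the orbit stays near the constant solution for a long time.} Fix \(L>0\) large. By proposition~\ref{ElemataryPropertyOfSystem}(ii), as \(\varepsilon\to 0\) the solution with data \((0,\vartheta^*-\varepsilon,0)\) converges uniformly on \([0,L]\) to the constant solution. Hence, for \(\varepsilon\) small, on \([0,L]\) we have \(|\alpha|<\frac{\pi}{8}\), \(|\vartheta-\vartheta^*|<c\), and \(\xi'\ge\frac12\sin 2\vartheta^*>0\). It follows that \(T_2>L\), that no \(\alpha=\frac{\pi}{4}\) crossing occurs in \([0,L]\), and that \(\xi(L)\ge \frac{L}{2}\sin 2\vartheta^*\).

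\textbf{Step 2: bounds on \([L,T_2]\).} Since \(\xi\) is increasing on \((0,T_2)\), we get \(\xi(t)\ge\xi(L)\) there, so \(\tanh\frac{2}{g}\xi\ge \frac12\) once \(L\) is large. Since \(\vartheta\) is increasing on \((0,T_2)\) and \(\vartheta(T_2)<\vartheta^*+c\), we have \(\vartheta(t)\in[\vartheta^*-\varepsilon,\vartheta^*+c)\) throughout. Thus \(\sin 2\vartheta\ge\frac12\sin 2\vartheta^*\), and \(|m_1\cos^2\vartheta-m_2\sin^2\vartheta|\le Cc\) because this expression vanishes at \(\vartheta^*\). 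The third equation of \eqref{eq:2025-10-02-1} then gives, on \([L,T_2]\),
\[
\alpha'\ \ge\ \frac{m}{4}\sin 2\vartheta^*\,\sin\alpha\;-\;2Cc\cos\alpha .
\]
At any time with \(\alpha=\frac{\pi}{4}\), the right-hand side equals \(\frac{\sqrt2}{2}\bigl(\frac{m}{4}\sin 2\vartheta^*-2Cc\bigr)\), which is positive once \(c\) is small. This is the place where the smallness of \(c\) in the hypothesis is needed.

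\textbf{Step 3: conclusion.} By Step 2, every crossing of the level \(\frac{\pi}{4}\) by \(\alpha\) on \((L,T_2)\) is transversal and upward. Since \(\alpha<\frac{\pi}{4}\) at \(t=L\) and \(\alpha=\frac{\pi}{2}\) at \(T_2\), there is exactly one crossing, and by Step 1 there is none on \([0,L]\). This gives the unique \(T_1\in(L,T_2)\), and \(\xi(T_1)\ge\xi(L)\ge\frac{L}{2}\sin 2\vartheta^*\). Since \(L\) was arbitrary, with \(\varepsilon\) chosen depending on \(L\), we conclude \(\xi(T_1)\to\infty\) as \(\varepsilon\to0\).

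The main point to watch is Step 2: the positivity of \(\alpha'\) at \(\alpha=\frac{\pi}{4}\) needs \(\xi\) bounded below and \(\vartheta\) confined near \(\vartheta^*\) on all of \([L,T_2]\), not just on the compact interval where continuous dependence applies. Monotonicity of \(\xi\) and \(\vartheta\) from lemma~\ref{lem:2025-12-04-1}, combined with the hypothesis \(\vartheta(T_2)<\vartheta^*+c\), is exactly what supplies this.
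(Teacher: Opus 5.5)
Your proof is correct and follows essentially the same route as the paper. In both, continuous dependence near the straight-line solution $(t\sin 2\vartheta^*,\vartheta^*,0)$ forces any slope-one time to have $\xi$ large, and the sign of the curvature term then gives uniqueness. The paper phrases that step as $\frac{\dd^2\vartheta}{\dd\xi^2}>0$ whenever $\frac{\dd\vartheta}{\dd\xi}\ge 1$, $\xi$ is large and $\vartheta<\vartheta^*+c$, which is exactly your $\alpha'>0$ at $\alpha=\frac{\pi}{4}$. Your version mainly makes the constants and the use of monotonicity of $\xi,\vartheta$ on $(0,T_2)$ explicit.
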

	\begin{proof}
		Let us first note that \(\vartheta(t)<\vartheta(T_2)\) for \(t\in(0,T_2)\), and that
		\[
		m_1 \cot \vartheta(T_2)-m_1 \tan \vartheta(T_2)>-\frac{m}{2}
		\]
		for \(c>0\) small enough and \(\vartheta(T_2)<\vartheta^*+c\).
		The existence of \(T_1\) follows by differential mean value theorem.
		Since the initial condition \(\varepsilon=0\) has as solution the line \((t \sin 2 \vartheta^*,\vartheta^*,0)\),
		one finds that as \(\varepsilon\to0\) the solution and its derivatives converge uniformly on compacta to the above curve.
		In particular, for on any finite interval \([0,T]\), one can make \(\vartheta'(t)/\xi'(t)\) arbitrarily small for all \(t\in[0,T]\)(by taking \(\varepsilon\) small), while \(\xi(T)\) is arbitrarily large (by taking \(T\) large and then \(\varepsilon\) small).
		This means that as \(\varepsilon\to0\) one must have \(\xi(T_1)\to\infty\), where \(T_1\) is any of the times satisfying the conditions in the lemma.
		Looking, however, at the graph ODE
		\[
		\frac{\dd^2 \vartheta}{\dd \xi^2}=\left( 1+\left( \frac{\dd \vartheta}{\dd \xi} \right)^2 \right)\left( m \frac{\dd \vartheta}{\dd \xi}\tanh \frac{2}{g}\xi+m_1 \cot \vartheta- m_2 \tan \vartheta \right),
		\]
		it follows that if \(\frac{\dd \vartheta}{\dd \xi} \ge 1\), \(\xi\) is large enough and \(\vartheta< \vartheta^*+c\) then \(\frac{\dd^2 \vartheta}{\dd \xi^2}>0\), since \(\tanh \frac{2}{g}\xi\to1\) as \(\xi\to\infty\).
		This gives the uniqueness of \(T_1\).
	\end{proof}
	These two lemma allows us to introduce the following notation:
	\[\xi_1:=\xi(T_1)\,(\text{when \(T_1\) is unique})\qquad \xi_2:=\xi(T_2),\qquad \vartheta_2:=\vartheta(T_2).\]

	\begin{lemma}\label{lem:2025-12-04-3}
		Let \(H=0\), \(T_1<T_2\) be any of the times satisfying \(\vartheta'(T_1)/\xi'(T_1)=1\), then for \(\varepsilon\) small enough one has that \(\vartheta(T_1) \ge \vartheta^*\).
	\end{lemma}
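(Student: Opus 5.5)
Assume, for contradiction, that along a sequence $\varepsilon_i\to 0$ there are times $T_1=T_1(\varepsilon_i)<T_2$ with $\vartheta'(T_1)/\xi'(T_1)=1$ and $\vartheta(T_1)<\vartheta^*$. We work in the standing setting of Lemma~\ref{lem:2025-12-04-1}. On $(0,T_2)$ both $\xi'$ and $\vartheta'$ are positive, so the profile curve is a graph $\vartheta=\vartheta(\xi)$ over $[0,\xi_2]$ and satisfies the graph ODE \eqref{eq:2025-07-26-1}. On the interval where $\vartheta<\vartheta^*$ we have $m_1\cot\vartheta-m_2\tan\vartheta>0$; moreover $\xi>0$ and $\frac{\dd\vartheta}{\dd\xi}>0$ there. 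The right-hand side of \eqref{eq:2025-07-26-1} is therefore strictly positive, so $\frac{\dd\vartheta}{\dd\xi}$ is strictly increasing in $\xi$ as long as $\vartheta<\vartheta^*$.

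The first step is to locate where the slope reaches $1$. Let $\bar\xi$ be the (first) value of $\xi$ at which $\vartheta=\vartheta^*$, or $\bar\xi=\xi_2$ if $\vartheta$ stays below $\vartheta^*$. On $[0,\xi(T_1)]$ we have $\vartheta<\vartheta^*$, so the slope is increasing there and reaches $1$ at $\xi(T_1)$. Consequently the slope is at least $1$ on $[\xi(T_1),\bar\xi)$.

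The second step uses the result of Lemma~\ref{lem:2025-12-04-2}: $\xi(T_1)\to\infty$ as $\varepsilon\to 0$. Its proof only used that the slope is uniformly small on compacta, so it applies to any such $T_1$. Hence $\tanh\frac{2}{g}\xi\to 1$ on the relevant range, and $\xi_2\ge\xi(T_1)\to\infty$.

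The third step derives the contradiction, and there are two sub-cases.

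\emph{Sub-case $\bar\xi<\xi_2$.} The graph reaches $\vartheta=\vartheta^*$ with slope at least $1$. Beyond that point the term $m\,\vartheta_\xi\tanh\frac{2}{g}\xi$ dominates the mildly negative potential term. This holds provided $\vartheta$ stays below $\vartheta^*+c$, in the spirit of the estimate $m_1\cot-m_2\tan>-\frac m2$ from Lemma~\ref{lem:2025-12-04-2}. As long as that holds, $\vartheta_{\xi\xi}>0$, so the slope never returns to $0$ and in fact blows up (Riccati-type growth $\vartheta_{\xi\xi}\gtrsim m(1+\vartheta_\xi^2)\vartheta_\xi/2$). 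This means $\xi'$ vanishes while $\vartheta$ is still increasing. But $\vartheta$ is only allowed to rise by at most $\vartheta^*-\vartheta(T_1)$ plus a small margin. Comparing the rise of $\vartheta$ with the Riccati growth shows that the slope would have to stay of order one over a $\xi$-interval whose length is bounded below. This forces $\vartheta$ past $\vartheta^*+c$, or past $\pi/2$, before $\xi'$ can vanish, which contradicts the standing assumptions.

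\emph{Sub-case $\bar\xi=\xi_2$.} Then $\xi'(T_2)=0$ with $\vartheta(T_2)\le\vartheta^*$. This means the slope becomes infinite while $\vartheta<\vartheta^*$. We then switch to the inverse graph $\xi(\vartheta)$, which satisfies \eqref{Xi}. Near $T_2$ this is a solution with $\xi>0$ large, $\xi'\ge 0$, and $\xi''<0$. By Lemma~\ref{lem:2025-10-03-1}(i), at $T_2$ we need $\xi''(t)<0$, which is consistent, so there is no contradiction yet. For the contradiction I would instead use the turning structure: after $T_2$, $\xi$ must decrease back to $0$ with $\vartheta$ still increasing (type 1). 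I expect this sub-case to be the hardest, and I would try the following bound first. While $\xi$ is huge, \eqref{Xi} gives $\xi_{\vartheta\vartheta}\approx -(1+\xi_\vartheta^2)m$, so each unit of $\xi_\vartheta$ costs a definite amount of $\vartheta$. Going from slope $1$ to slope $\infty$ and back to slope $-\infty$ then takes $\vartheta$-length at least of order $\pi/(2m)$. Combined with $\vartheta(T_1)$ being close to $\vartheta^*-\varepsilon$ (the curve stays near height $\vartheta^*-\varepsilon$ over long $\xi$-ranges, with tiny slope), this shows $\vartheta(T_2)$ exceeds $\vartheta^*$ by at least a fixed amount. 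That contradicts both $\vartheta(T_2)\le\vartheta^*$ and the hypothesis $\vartheta(T_2)<\vartheta^*+c$ for small $c$.

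The main obstacle is making this last comparison quantitative: showing that the curve cannot pass from slope $1$ to vertical while staying below $\vartheta^*$. I would do this by integrating the inverse graph ODE near $\tanh\approx 1$ and comparing with the explicit solution of $\xi_{\vartheta\vartheta}=-m(1+\xi_\vartheta^2)$, namely $\xi_\vartheta=\tan(\phi_0-m\vartheta)$.
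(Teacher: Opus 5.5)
\textbf{The gap is in sub-case $\bar\xi<\xi_2$.} There your contradiction comes from the bound $\vartheta(T_2)<\vartheta^*+c$. That bound is a hypothesis of Lemma~\ref{lem:2025-12-04-2}, but not of this lemma: the statement concerns any $T_1<T_2$ with slope $1$, and the paper's proof never uses $c$.

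Your argument there also never uses the assumption you are refuting. It needs only $\vartheta(T_1)\ge\vartheta^*-\varepsilon$, which always holds. So what it really shows is that $\vartheta(T_2)<\vartheta^*+c$ is incompatible with small $c,\varepsilon$. That is the content of Lemma~\ref{lem:2025-12-04-4}, not of this lemma.

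Without $c$, nothing after $T_1$ is contradictory. A curve that crosses $\vartheta=\vartheta^*$ with slope $\ge 1$ and later turns vertical is exactly what the actual solutions do. Your claim that $\vartheta$ may rise by at most $\vartheta^*-\vartheta(T_1)$ plus a small margin has no basis in the hypotheses. Sub-case $\bar\xi=\xi_2$ is fine in substance. There only the passage from slope $1$ to vertical matters; the part after $T_2$ is irrelevant to $\vartheta(T_2)$.

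\textbf{The missing idea is that the contradiction lives on $[0,\xi_1]$, before $T_1$.} On that interval $\vartheta^*-\varepsilon\le\vartheta<\vartheta^*$ automatically, so the total rise is less than $\varepsilon$. The paper argues as follows.
\begin{enumerate}[(1)]
\item Convexity gives $\frac{\dd\vartheta}{\dd\xi}(\xi_1-1)<\varepsilon$.
\item With $G=m_1\cot\vartheta-m_2\tan\vartheta$, one has $G>\frac{m_1+m_2}{2}\varepsilon$ on $[0,\xi_1/2]$. Hence $\frac{\dd\vartheta}{\dd\xi}>\frac{m_1+m_2}{2}\varepsilon$ on $[1,\xi_1]$.
\item Therefore $G/\frac{\dd\vartheta}{\dd\xi}=O(1)$ in \eqref{eq:2025-12-04-1}. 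Integrating over $(\xi_1-1,\xi_1)$ gives $-\ln\frac{\dd\vartheta}{\dd\xi}(\xi_1-1)=O(1)$, which contradicts step (1).
\end{enumerate}

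Your own sub-case-2 estimate, run backward from $T_1$ instead of forward, would also work, with no case split. Consider the $\xi$-interval on which the slope climbs from $\frac12$ to $1$; it lies inside $(0,\xi_1)$ because the slope starts at $0$. On it, $0<G/\frac{\dd\vartheta}{\dd\xi}\le 2G(\vartheta^*-\varepsilon)$, so the right side of \eqref{eq:2025-12-04-1} is at most $m+o(1)$. Hence the interval has length at least $\frac{\ln(5/2)}{2(m+o(1))}$. Since the slope is at least $\frac12$ there, $\vartheta$ rises on it by at least half that length, which exceeds $\varepsilon$.
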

	\begin{proof}
		Let \(\xi_1=\xi(T_1)\) for simplicity, we note that \(\lim_{\varepsilon\to 0} \xi_1=\infty\) as in the proof of lemma~\ref{lem:2025-12-04-2}.
		We assume that \(\vartheta(T_1)< \vartheta^*\) for contradiction.
		Since \(\vartheta'(t) \ge 0\) for all \(t\in(0,T_2)\), we obtain that \(\vartheta(t)<\vartheta^*\) for all \(t\in[0,T_1]\).
		Then from the graph ODE
		\[
		\frac{\dd^2 \vartheta}{\dd \xi^2}=\left( 1+\left( \frac{\dd \vartheta}{\dd \xi} \right)^2 \right)\left( m \frac{\dd \vartheta}{\dd \xi}\tanh \frac{2}{g}\xi+m_1 \cot \vartheta- m_2 \tan \vartheta \right),
		\]
		one gets that \(\frac{\dd^2 \vartheta}{\dd \xi^2}>0\) for \(\xi\in[0,\xi_1]\).
		This gives that
		\[\vartheta(\frac{\xi_1}{2})<\vartheta^*-\frac{\varepsilon}{2},\qquad \frac{\dd \vartheta}{\dd \xi}(\xi_1-1)<\varepsilon\]
		and then for \(\varepsilon\) small enough:
		\[m_1 \cot \vartheta-m_2 \tan \vartheta>a_1 \varepsilon:= \frac{m_1+m_2}{2} \varepsilon\]
		provided \(\xi\in[0,\xi_1/2]\).
		Hence
		\[\frac{\dd^2 \vartheta}{\dd \xi^2}> a_1 \varepsilon\text{  for  }\xi\in[0,\xi_1/2],\]
		which implies
		\[\frac{\dd \vartheta}{\dd \xi}(1)>a_1 \varepsilon\qquad\Rightarrow\qquad \frac{\dd \vartheta}{\dd \xi} > a_1 \varepsilon\text{ for }\xi\in[1,\xi_1].\]
		Rewriting the graph ODE as
		\begin{equation}\label{eq:2025-12-04-1}
			\frac{\frac{\dd^2 \vartheta}{\dd \xi^2}}{\frac{\dd \vartheta}{\dd \xi}\left( 1+\left( \frac{\dd \vartheta}{\dd \xi} \right)^2 \right)}=m \tanh \frac{2}{g}\xi+ \frac{m_1 \cot \vartheta-m_2 \tan \vartheta}{\frac{\dd \vartheta}{\dd \xi}},
		\end{equation}
		one notes that the second term on the right-hand side is \(O(1)\) in the interval \((\xi_1-1,\xi_1)\) and that the left-hand side has
		\[
		\ln \left( \frac{\frac{\dd \vartheta}{\dd \xi}}{\sqrt{1+\left( \frac{\dd \vartheta}{\dd \xi} \right)^2}} \right)
		\]
		as anti-derivative.
		Integrating \eqref{eq:2025-12-04-1} over \((\xi_1-1,\xi_1)\) then gives
		\[- \ln \left( \frac{\dd \vartheta}{\dd \xi}(\xi_1-1) \right)=O(1)\]
		which contradicts \(\frac{\dd \vartheta}{\dd \xi}(\xi_1-1)< \varepsilon\).
	\end{proof}

	\begin{lemma}\label{lem:2025-12-04-4}
		Let \(H=0\) and \(c\) being as lemma~\ref{lem:2025-12-04-2}.
		Then for \(\varepsilon>0\) small enough one has that
		\[
		\vartheta_2 \ge \min \left\{ \vartheta^*+ c,\ \frac{\ln 2}{2(m+|G(\vartheta^*+ c)|)}+\vartheta^*\right\},
		\]
		where \(G(\vartheta) = m_1 \cot \vartheta-m_2 \tan \vartheta\).
	\end{lemma}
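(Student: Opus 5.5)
The plan is to argue by contradiction: assume $\vartheta_2<\vartheta^*+c$, and show that then $\vartheta_2-\vartheta^*$ is bounded below by an explicit constant exceeding $\frac{\ln 2}{2(m+|G(\vartheta^*+c)|)}$. We work under the standing hypothesis that $\vartheta^*-\varepsilon$ is type 1, with $\varepsilon$ small. Then Lemma~\ref{lem:2025-12-04-1} provides $T_2$, with $\xi'>0$ and $\vartheta'>0$ on $(0,T_2)$ and $\xi'(T_2)=0$. Lemma~\ref{lem:2025-12-04-2} (applicable precisely because $\vartheta_2<\vartheta^*+c$) provides the unique $T_1<T_2$ with $\vartheta'(T_1)/\xi'(T_1)=1$. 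Lemma~\ref{lem:2025-12-04-3} gives $\vartheta(T_1)\ge\vartheta^*$.

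Since $\vartheta'>0$ on $(0,T_2)$, the arc $t\in[T_1,T_2]$ can be written as a graph $\xi=\xi(\vartheta)$ over $\vartheta\in[\vartheta(T_1),\vartheta_2]$, and it satisfies \eqref{Xi}. Set $p=\frac{\dd\xi}{\dd\vartheta}$. Then $p=1$ at $\vartheta(T_1)$ and $p=0$ at $\vartheta_2$, since $\xi'(T_2)=0$ while $\vartheta'(T_2)\neq0$. Moreover $p>0$ on the interior because $\xi'>0$ there. The constraint $0\le p\le 1$ on the interval needs a short check. Uniqueness of $T_1$ gives $p\neq 1$ on the interior, and $p$ starts at $1$ and ends at $0$. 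So either $p<1$ throughout, or $p>1$ throughout, the latter forcing $p(\vartheta_2)\ge1$, which is a contradiction. Hence $0\le p\le1$.

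Next, rewrite \eqref{Xi} as
\[
\frac{\dd}{\dd\vartheta}\arctan p=\frac{p'}{1+p^2}=-\Bigl(m\tanh\tfrac{2}{g}\xi+G(\vartheta)\,p\Bigr).
\]
On $[\vartheta(T_1),\vartheta_2]\subset[\vartheta^*,\vartheta^*+c)$ we have the following bounds:
\begin{itemize}
\item $|\tanh|\le1$;
\item $0\le p\le1$;
\item $G$ is decreasing with $G(\vartheta^*)=0$, so $|G(\vartheta)|\le|G(\vartheta^*+c)|$.
\end{itemize}
Hence the right-hand side is bounded in absolute value by $m+|G(\vartheta^*+c)|$. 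Integrating from $\vartheta(T_1)$ to $\vartheta_2$ gives
\[
\frac{\pi}{4}=\arctan 1-\arctan 0\le \bigl(m+|G(\vartheta^*+c)|\bigr)\bigl(\vartheta_2-\vartheta(T_1)\bigr)\le \bigl(m+|G(\vartheta^*+c)|\bigr)(\vartheta_2-\vartheta^*).
\]
Since $\frac{\pi}{4}>\frac{\ln2}{2}$, this yields $\vartheta_2\ge\vartheta^*+\frac{\ln 2}{2(m+|G(\vartheta^*+c)|)}$ whenever $\vartheta_2<\vartheta^*+c$, which proves the stated minimum bound. The same bound could also be obtained by integrating the log-form antiderivative $\ln\bigl(p/\sqrt{1+p^2}\bigr)$ over the subinterval where $p$ drops from $1$ to $1/\sqrt{3}$; this presumably explains the constant $\ln 2/2$ in the statement.

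The main obstacle I expect is justifying the graph representation and the sign and size control of $p$ on the whole interval $[\vartheta(T_1),\vartheta_2]$. This rests on the monotonicity from Lemma~\ref{lem:2025-12-04-1} together with the uniqueness of $T_1$ from Lemma~\ref{lem:2025-12-04-2}. Once that is in place, the integral estimate is straightforward.
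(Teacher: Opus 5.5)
Your argument is correct and is essentially the paper's: under the standing type~1 assumption you suppose $\vartheta_2<\vartheta^*+c$, use uniqueness of $T_1$ (Lemma~\ref{lem:2025-12-04-2}) to control the slope of the arc between $T_1$ and $T_2$, use $\vartheta(T_1)\ge\vartheta^*$ (Lemma~\ref{lem:2025-12-04-3}), and integrate the graph ODE with right-hand side bounded by $m+|G(\vartheta^*+c)|$. The only difference is that the paper writes the arc as $\vartheta(\xi)$ with $\frac{\dd\vartheta}{\dd\xi}\ge 1$ and integrates the logarithmic primitive to get $\xi_2-\xi_1\ge\frac{\ln 2}{2(m+|G(\vartheta^*+c)|)}$, then uses $\vartheta_2-\vartheta(T_1)\ge\xi_2-\xi_1$; your inverse graph $\xi(\vartheta)$ with the $\arctan$ primitive bounds $\vartheta_2-\vartheta(T_1)$ directly, stays regular at $T_2$, and gives the sharper constant $\pi/4$.
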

	\begin{proof}
		We assume that \[
		\vartheta_2 < \min \left\{ \vartheta^*+ c,\ \frac{\ln 2}{2(m+|G(\vartheta^*+ c)|)}+\vartheta^*\right\}
		\]
		and get a contradiction.
		This assumption gives that for \(\varepsilon\) small enough:
		\[\frac{\dd \vartheta}{\dd \xi} \ge 1,\quad\forall \xi\in [\xi_1,\xi_2]\]
		by lemma~\ref{lem:2025-12-04-2}, which implies that
		\[
		\frac{\frac{\dd^2 \vartheta}{\dd \xi^2}}{\frac{\dd \vartheta}{\dd \xi}\left( 1+\left( \frac{\dd \vartheta}{\dd \xi} \right)^2 \right)} \le\left| m \tan \frac{2}{g}\xi+ \frac{m_1 \cot \vartheta-m_2 \tan \vartheta}{\frac{\dd \vartheta}{\dd \xi}} \right| \le m+|G(\vartheta^*+ c)|.
		\]
		Integrating from \(\xi_1\) to \(\xi_2\) gives that
		\[\frac{1}{2}\ln 2 \le (m+|G(\vartheta^*+ c)|)(\xi_2-\xi_1)\]
		and then from lemma~\ref{lem:2025-12-04-3}
		\[\vartheta(T_2)=\vartheta(T_1)+\int_{\xi_1}^{\xi_2}\frac{\dd \vartheta}{\dd \xi}\dd \xi \ge \vartheta^*+\xi_2-\xi_1 \ge \frac{\ln 2}{2(m+|G(\vartheta^*+ c)|)}+\vartheta^*.\]
		This contradiction completes the proof.
	\end{proof}
	\begin{lemma}\label{lem:2025-12-04-5}
		Let \(H=0\). If \(\varepsilon\) is small enough then \(\vartheta^*-\varepsilon\) is not type 1.
	\end{lemma}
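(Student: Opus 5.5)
We argue by contradiction: assume \(\vartheta^*-\varepsilon\) is type 1 for arbitrarily small \(\varepsilon\). Let \(T_2<T\) be the times from lemma~\ref{lem:2025-12-04-1}, so that \(\vartheta'>0\) on \((0,T)\), \(\xi'(T_2)=0\), \(\xi'<0\) on \((T_2,T)\) and \(\xi(T)=0\). The plan is to show that, after \(T_2\), the profile curve turns until \(\vartheta'=0\) within a bounded arclength, while \(\xi\) is still enormous. This produces a zero of \(\vartheta'\) in \((T_2,T)\), contradicting type 1.

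\textbf{Step 1: \(\xi_2\to\infty\).} On \((0,T_2)\) the slope \(\dd\vartheta/\dd\xi\) is \(0\) at \(t=0\) and tends to \(+\infty\) as \(t\to T_2\). By the intermediate value theorem there is some \(T_1<T_2\) with \(\vartheta'(T_1)/\xi'(T_1)=1\). The first half of the proof of lemma~\ref{lem:2025-12-04-2} gives \(\xi(T_1)\to\infty\) as \(\varepsilon\to0\). That part only uses convergence on compacta to the line \((t\sin2\vartheta^*,\vartheta^*,0)\), not uniqueness of \(T_1\) or the bound \(\vartheta_2<\vartheta^*+c\). Since \(\xi'>0\) on \((0,T_2)\), we get \(\xi_2\ge\xi(T_1)\to\infty\).

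\textbf{Step 2: \(\vartheta\) is uniformly above \(\vartheta^*\).} By lemma~\ref{lem:2025-12-04-4}, \(\vartheta_2\ge\vartheta^*+c'\), where \(c'>0\) is independent of \(\varepsilon\). Since \(\vartheta\) is increasing on \((T_2,T)\) and \(G\) is decreasing, we have
\[
G(\vartheta(t))\le G(\vartheta^*+c')=:-g_0<0 \quad\text{for } t\in[T_2,T).
\]

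\textbf{Step 3: bounded turning time.} Work with the arclength form \eqref{eq:2025-10-01-2}, which has the same trajectories, so the type conditions are unchanged. At \(T_2\) we have \(\alpha=\pi/2\), and for \(t>T_2\) we have \(\alpha\in(\pi/2,\pi)\) as long as \(\vartheta'>0\) and \(\xi'<0\). While \(\xi\ge1\), both terms of
\[
\alpha'=m\tanh\tfrac{2}{g}\xi\,\sin\alpha+G(\vartheta)\cos\alpha
\]
are nonnegative. Their sum is bounded below by
\[
\beta:=\min_{\alpha\in[\pi/2,\pi]}\Bigl(m\tanh\tfrac{2}{g}\,\sin\alpha+g_0|\cos\alpha|\Bigr)>0,
\]
a constant independent of \(\varepsilon\). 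Hence \(\alpha\) reaches \(\pi\) within arclength at most \(L:=\pi/(2\beta)\). During that arclength, \(\xi\) decreases by at most \(L\), because \(|\xi'|\le1\). Choose \(\varepsilon\) so small that \(\xi_2>L+1\). Then the hypothesis \(\xi\ge1\) holds throughout this phase, which justifies the lower bound on \(\alpha'\). So \(\alpha=\pi\) is reached at some time \(t^\*\in(T_2,T)\) with \(\xi(t^\*)>1>0\).

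At \(t^\*\) we have \(\vartheta'=\sin\alpha=0\). Also \(\vartheta\) stays in \((0,\pi/2)\) by proposition~\ref{ElemataryPropertyOfSystem}(iii). This contradicts \(\vartheta'\ne0\) on \((0,T)\), which proves the lemma.

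The main obstacle is Step 1. It requires checking carefully that the divergence \(\xi(T_1)\to\infty\) in lemma~\ref{lem:2025-12-04-2} does not secretly depend on the assumption \(\vartheta_2<\vartheta^*+c\). If it did, I would instead argue directly: the solution stays \(C^1\)-close to the horizontal line on \([0,R]\) for any fixed \(R\), so \(\xi\) exceeds \(R\) before the slope can become vertical. Steps 2–3 are uniform estimates that depend only on \(c'\), \(m\) and \(G\).
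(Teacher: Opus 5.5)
Your proof is correct and follows essentially the paper's argument. Lemma~\ref{lem:2025-12-04-4} gives a uniform bound $G(\vartheta)<-b$ on $(T_2,T)$, $\xi_2\to\infty$ by convergence to the horizontal line, and the profile curve is then forced to turn horizontal ($\vartheta'=0$) while $\xi$ is still positive. The only cosmetic difference is in the turning estimate: the paper integrates $\frac{\dd}{\dd\xi}\arctan\frac{\dd\vartheta}{\dd\xi}<-b$ in the graph variable $\xi$, dropping the $m\tanh$ term since it has a favorable sign, so no restriction $\xi\ge 1$ is needed; you integrate $\alpha'\ge\beta$ in arclength and use the $m\tanh$ term to control turning near $\alpha=\pi/2$.
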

	\begin{proof}
		By lemma~\ref{lem:2025-12-04-1}, the assumption that \(\vartheta^*-\varepsilon\) is type 1 leads to the trajectory \[\left\{ (\xi,\vartheta)(t)\mid t\in(T_2,T)\right\}\] being a graph of \(\vartheta\) over \(\xi\).
		This graph is determined by the graph ODE
		\[
		\frac{\dd^2 \vartheta}{\dd \xi^2}=\left( 1+\left( \frac{\dd \vartheta}{\dd \xi} \right)^2 \right)\left( m \frac{\dd \vartheta}{\dd \xi}\tanh \frac{2}{g}\xi+m_1 \cot \vartheta- m_2 \tan \vartheta \right)
		\] and the initial data \(\vartheta(\xi_2)=\vartheta_2,\,\lim_{\xi\to \xi_2^-}\frac{\dd \vartheta}{\dd \xi}=-\infty\).
		By lemma~\ref{lem:2025-12-04-4}, there is a \(b>0\) such that
		\[m_1 \cot \vartheta-m_2 \tan \vartheta <-b,\quad\forall \xi\in(0,\xi_2)\]
		for the graph considered, which gives that
		\[\frac{\frac{\dd^2 \vartheta}{\dd \xi^2}}{ 1+\left( \frac{\dd \vartheta}{\dd \xi} \right)^2 } <-b\] since \(\frac{\dd \vartheta}{\dd \xi}<0\).
		Integrating from \(\xi\) to \(\xi_2\) we get that \[\frac{\pi}{2}+\arctan \frac{\dd \vartheta}{\dd \xi}>b(\xi_2-\xi)\]
		which is impossible for \(\xi=\xi_2-\frac{\pi}{2b}\).
		we conclude the proof by noting that \(\lim_{\varepsilon\to 0}\xi_2=\infty\), similarly to the proof of lemma~\ref{lem:2025-12-04-2}.
	\end{proof}

	\section{Proof of third item of proposition \ref{prop:2025-10-03-1}}\label{sec7}
	\begin{lemma}\label{lem:2025-10-03-3}
		Let \(H=0\) and \(\delta^*\) be type \(3\), then:
		\begin{enumerate}[{\rm (i)}]
			\item \(\xi'(t)>0\) and \(\vartheta'(t)>0\) for all \(t>0\).
			\item \(\lim_{t\to\infty}\alpha(t)=\frac{\pi}{2}\) and \(\alpha'(t)>0\) for large \(t\).
			\item \(\lim_{t\to\infty}\vartheta(t)=\frac{\pi}{2}\) and \(\lim_{t\to\infty}\xi(t)\) is finite,
		\end{enumerate}
		where \(\xi(t),\vartheta(t),\alpha(t)\) the solution to \eqref{eq:2025-10-02-1} with initial conditions \((\xi,\vartheta,\alpha)(0)=(0,\delta^*,0)\).
	\end{lemma}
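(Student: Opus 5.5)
The plan is to first pin down the signs of $\vartheta'$ and $\xi'$, then obtain the limits in (iii), and only then treat the fine behaviour of $\alpha$ in (ii). Throughout, $\delta^*$ is assumed type 3, and I use $\delta^*\in(0,\vartheta^*)$, which is items (i) and (ii) of Proposition~\ref{prop:2025-10-03-1}.

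\emph{Item (i).} At $t=0$ we have $\alpha=0$ and $\alpha'(0)=2(m_1\cos^2\delta^*-m_2\sin^2\delta^*)>0$ because $\delta^*<\vartheta^*$. Hence $\vartheta'>0$ and $\xi'>0$ for small $t>0$.
\begin{itemize}
\item Since $\vartheta'$ never vanishes on $(0,\infty)$ (type 3), we get $\vartheta'>0$ for all $t>0$.
\item Since $\xi$ never vanishes on $(0,\infty)$, we get $\xi>0$ for all $t>0$.
\item Suppose $\xi'(t_0)=0$ for some first $t_0>0$. Then $\xi(t_0)>0$, so Lemma~\ref{lem:2025-10-03-1}(i) gives $\xi''(t_0)<0$. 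Thus $\xi>0$ and $\xi'<0$ at $t_0+\eta$ for small $\eta>0$.
\item Lemma~\ref{lem:2025-10-02-2} then forces $\xi$ to vanish later, contradicting type 3. So $\xi'>0$ on $(0,\infty)$.
\end{itemize}
Consequently $\cos\alpha>0$ and $\sin\alpha>0$ for all $t>0$. By continuity from $\alpha(0)=0$, this gives $\alpha(t)\in(0,\pi/2)$ for $t>0$.

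\emph{Item (iii).} Here $\vartheta$ is increasing and bounded, so it has a limit $\vartheta_\infty\le\pi/2$. By Lemma~\ref{lem:2025-10-02-1}, $\vartheta$ crosses $\vartheta^*$, so $\vartheta_\infty>\vartheta^*$. Suppose $\vartheta_\infty<\pi/2$.
\begin{itemize}
\item Then $\sin2\vartheta$ is bounded below for $t\ge1$.
\item Since $\int_1^\infty\vartheta'\,dt<\infty$ and $\alpha'$ is bounded, we get $\sin\alpha\to0$, hence $\alpha\to0$.
\item Then $\alpha'\to 2(m_1\cos^2\vartheta_\infty-m_2\sin^2\vartheta_\infty)<0$, so $\alpha$ eventually becomes negative. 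This is a contradiction, so $\vartheta\to\pi/2$.
\end{itemize}
For finiteness of $\lim\xi$, I pass to the arclength form \eqref{eq:2025-10-01-2}. Fix $\eta>0$ and a time after which $\vartheta>\vartheta^*+\eta$.
\begin{itemize}
\item If $\alpha$ became very small there, then $\alpha'\approx(m_1\cot\vartheta-m_2\tan\vartheta)\cos\alpha+O(\sin\alpha)<0$. The first term is negative and bounded away from zero, so $\alpha$ would be driven through $0$, which is impossible.
\item Hence $\alpha\ge\alpha_0>0$ there, and $d\vartheta/ds=\sin\alpha\ge\sin\alpha_0$.
\item So $\vartheta$ reaches $\pi/2$ within finite arclength $S$.
\item Since $|d\xi/ds|\le1$, we get $\xi\le\xi(s_0)+S<\infty$.
\end{itemize}

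\emph{Item (ii).} To show $\alpha\to\pi/2$, work again in arclength. The term $-m_2\tan\vartheta\cos\alpha$ in $d\alpha/ds$ tends to $-\infty$ unless $\cos\alpha\to0$, while the remaining terms stay bounded. If $\cos\alpha\ge c>0$ along a sequence, then $d\alpha/ds$ would be hugely negative on intervals of definite length. That would push $\alpha$ below $0$ before the finite arclength runs out, which is impossible; so $\alpha\to\pi/2$.

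The claim $\alpha'>0$ for large $t$ is the main obstacle. Here I would use the graph formulation \eqref{Xi} with $u=d\xi/d\vartheta=\cot\alpha\to0^+$. By \eqref{Xi},
\[
u'=-(1+u^2)\bigl[m\tanh\tfrac{2}{g}\xi-(m_2\tan\vartheta-m_1\cot\vartheta)\,u\bigr].
\]
Since $\alpha'>0$ is equivalent to $u'<0$, it suffices to show that $Q:=m\tanh\frac{2}{g}\xi-(m_2\tan\vartheta-m_1\cot\vartheta)u$ is eventually positive. Note that $\tanh\frac{2}{g}\xi$ is bounded below by a positive constant, since $\xi$ is increasing and positive. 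I would argue as follows.
\begin{itemize}
\item If $Q\le0$ on an interval reaching $\pi/2$, then $u'\ge0$ there, so $u$ stays bounded below by a positive constant. This contradicts $u\to0$.
\item If $Q$ changes sign infinitely often, examine $Q'$ at a zero of $Q$. There $u'=0$, so $Q'=m\frac{2}{g}\operatorname{sech}^2(\tfrac{2}{g}\xi)\,u-(m_2\sec^2\vartheta+m_1\csc^2\vartheta)u$. The second term dominates because $\sec^2\vartheta\to\infty$, so $Q'<0$ at every zero near $\pi/2$.
\item Negative slope at every zero means $Q$ can cross from positive to negative but never back. Hence $Q$ has at most one zero near $\pi/2$, and by the first bullet $Q>0$ eventually.
\end{itemize}
This gives $u'<0$, i.e.\ $\alpha'>0$, for large $t$.
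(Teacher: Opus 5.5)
Your proposal is correct. Item (i) is the paper's argument verbatim. The final claim $\alpha'>0$ is the paper's argument in different coordinates. In the arclength system \eqref{eq:2025-10-01-2} one has $\frac{d\alpha}{ds}=\sin\alpha\,Q$ with $u=\cot\alpha$. So your computation that $Q'<0$ at every zero of $Q$ is exactly the paper's observation that $\alpha''<0$ wherever $\alpha'=0$ near $\vartheta=\frac{\pi}{2}$. Your first bullet plays the role of the paper's remark that $\alpha'>0$ at arbitrarily late times because $\alpha$ increases to $\frac{\pi}{2}$.

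Where you genuinely differ is the order of (ii) and (iii).
\begin{itemize}
\item The paper first gets $\vartheta'\to 0$ from monotonicity and bounded $\vartheta''$. It then uses $m_1\cos^2\vartheta-m_2\sin^2\vartheta<-b_1$ in time $t$ to force $\cos\alpha\to 0$. That single fact gives $\xi'<\vartheta'$ eventually, so $\xi$ is bounded. Hence $\xi',\vartheta'\to 0$, and the system forces $\sin 2\vartheta\to 0$.
\item You prove $\vartheta\to\frac{\pi}{2}$ independently of $\alpha$: if $\vartheta_\infty<\frac{\pi}{2}$, integrability of $\vartheta'$ forces $\alpha\to 0$ and then $\alpha'<0$. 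You bound $\xi$ through a trapping bound $\alpha\ge\alpha_0$, which is equivalent to $\xi'\le\cot\alpha_0\,\vartheta'$. Only then do you get $\alpha\to\frac{\pi}{2}$.
\end{itemize}
The paper's ordering is more economical: one estimate drives everything, entirely in the globally defined time $t$. Yours needs two separate trapping arguments and a passage to arclength.

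That passage to arclength is where your write-up is loose. You have shown the total arclength $S$ is finite. So near $S$ there are no ``intervals of definite length'', and ``$d\alpha/ds$ is hugely negative, pushing $\alpha$ below $0$ before the arclength runs out'' needs a quantitative input. One option is to note $\frac{\pi}{2}-\vartheta(s)=\int_s^S\sin\alpha\le S-s$. Then $\tan\vartheta(s)\ge\cot(S-s)$, whose integral diverges at $S$.

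Simpler is to stay in the time $t$ of \eqref{eq:2025-10-02-1}, where existence is global.
\begin{itemize}
\item For $\alpha\to\frac{\pi}{2}$: once you know $\vartheta\to\frac{\pi}{2}$, you have $\alpha'=o(1)+(-2m_2+o(1))\cos\alpha$. So $\{\cos\alpha\ge c\}$ is eventually forward invariant with $\alpha'\le -m_2c$ there, contradicting $\alpha>0$.
\item For your step ``$\alpha$ would be driven through $0$'' in the bound on $\xi$: in time $t$ one has uniformly $\alpha'\le m\sin\alpha-b\cos\alpha$ with $b=-2\bigl(m_1\cos^2(\vartheta^*+\eta)-m_2\sin^2(\vartheta^*+\eta)\bigr)>0$. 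This avoids any worry about the parameter running out.
\end{itemize}
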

	\begin{proof}
		By definition~\ref{def:10-25-4}, we have \(\vartheta'(t)>0\) for all \(t>0\).
		Suppose, by contradiction, that for some \(t_0>0\) one has \(\xi'(t_0)\le 0\).
		Then there is a \(T>0\) such that \(\xi'(T)=0\) and \(\xi'(t)>0\) for \(t\in(0,T)\), which follows that \(\xi''(T)<0\) by lemma~\ref{lem:2025-10-03-1}.
		Applying lemma~\ref{lem:2025-10-02-2} we now gets a contradiction since \(\xi(t)>0\) for all \(t>0\) by definition~\ref{def:10-25-4}.
		This finishes the proof of the first item.

		From the boundedness of \(\vartheta''(t)\) and \(\vartheta(t)\), we obtain that \(\lim_{t\to\infty}\vartheta'(t)=0\) since \(\vartheta'(t)>0\) for all \(t>0\).
		Note that from lemma~\ref{lem:2025-10-02-1} there is a \(b_1>0\) such that
		\[m_1 \cos ^2 \vartheta(t) - m_2 \sin^2 \vartheta(t)<-b_1\]
		for large \(t\).
		Suppose, by contradiction, that the limit \(\lim_{t\to\infty}\alpha(t)\) does not exists or less that \(\frac{\pi}{2}\).
		Then there is a \(b_2>0\) and a  sequence \(\left\{ t_i\right\}\) increasing to infinity of time such that \(\cos \alpha(t_i)>b_2\) for all \(i\).
		From the limit of \(\vartheta'(t)\) one then gets that \(\alpha'(t_i)<-b_1b_2<0\) for large \(i\), which follows that \(\alpha'(t)<-b_1b_2\) for large \(t\) by an elementary analysis.
		This, however, is impossible since \(\alpha(t)>0\) for all \(t>0\).

		From \(\lim_{t\to\infty}\alpha(t)=\frac{\pi}{2}\) we obtain that \(\xi'(t)<\vartheta'(t)\) for large \(t\).
		Together with the boundedness of \(\vartheta(t)\) we then gets the boundedness of \(\xi(t)\), which follows that \(\lim_{t\to\infty}\xi(t)\) is finite.
		Note that \(\lim_{t\to\infty}\vartheta(t)\) is also finite.
		Therefore \(\lim_{t\to\infty}\xi'(t)=\lim_{t\to\infty}\vartheta'(t)=0\) by the boundedness of \(\xi''(t)\) and \(\vartheta''(t)\).
		Then from \eqref{eq:2025-10-02-1} one has \(\lim_{t\to\infty}\vartheta(t)=\frac{\pi}{2}\).
		This finishes the proof of the third item.

		To prove \(\alpha'(t)>0\) for large \(t\), it is more convenient to work with the system \eqref{eq:2025-10-01-2}.
		Let \(T_\infty\) be right endpoint of the maximal interval of existence for corresponding solution.
		Since \(\alpha(t)\) converges to \(\frac{\pi}{2}\) from below, we then gets a sequence \(\left\{ t_i\right\}\) increasing to \(T_\infty\) such that \(\alpha'(t_i)>0\) for all \(i\).
		Suppose, for contradiction, that \(\alpha'(t) > 0\) does not hold for all large \(t>0\). Then there exists sequence \(\{\tilde{t}_i\}\) increasing to \(T_\infty\) such that \(\alpha'(\tilde{t}_i) = 0\) for all \(i\).
		From \(\lim_{t\to T_\infty}\vartheta(t)=\frac{\pi}{2}\) and \eqref{eq:2025-10-01-2} one sees that, for \(t\) approaching \(T_\infty\), \(\alpha''(t)<0\) whenever \(\alpha'(t)=0\).
		Together with the existence of that two sequences, we gets a contradiction by a standard argument.
		This finishes the proof of the second item.
	\end{proof}

	\begin{lemma}\label{lem:2025-10-03-2}
		Let \(H=0\) then \(\delta^*\) is not type \(3\).
	\end{lemma}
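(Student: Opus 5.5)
The plan is to argue by contradiction: assume $\delta^*$ is type 3, and derive a contradiction with the fact that every $\delta<\delta^*$ is type 1. Lemma~\ref{lem:2025-10-03-3} describes the trajectory $(\xi,\vartheta,\alpha)$ issuing from $(0,\delta^*,0)$. We have $\xi'>0$ and $\vartheta'>0$ for $t>0$, $\alpha\to\pi/2$ from below, $\vartheta\to\pi/2$, and $\xi\to\xi_\infty$, where $\xi_\infty$ is finite and $\xi_\infty>0$. So the profile curve runs into the singular boundary $\{\vartheta=\pi/2\}$ at the interior height $\xi_\infty>0$, arriving vertically.

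\textbf{Step 1 (nearby solutions shadow the limit curve).} Take $\delta_k\uparrow\delta^*$; each $\delta_k$ is type 1. By Proposition~\ref{ElemataryPropertyOfSystem}(ii), for every fixed $T$ the solutions $\gamma_{\delta_k}$ converge to $\gamma_{\delta^*}$ uniformly on $[0,T]$. Hence, given $\eta>0$, for $k$ large there is a time $t_k$ with the following properties: $\xi_k(t_k)>\xi_\infty/2$, $\pi/2-\vartheta_k(t_k)<\eta$, $|\alpha_k(t_k)-\pi/2|<\eta$, and $\xi_k'>0$, $\vartheta_k'>0$ on $(0,t_k]$. Since $\delta_k$ is type 1, $\xi_k$ must later return to $0$ while $\vartheta_k'$ stays positive. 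By Lemma~\ref{lem:2025-10-03-1}(i), $\xi_k$ then has a single maximum and decreases afterwards. So after $t_k$ the trajectory is a graph $\xi=\xi_k(\vartheta)$ over an interval $[\vartheta_k(t_k),b_k)\subset(\vartheta^*,\pi/2)$. On this interval $\xi_k$ falls from at least $\xi_\infty/2$ to $0$, while $\vartheta$ varies by less than $\eta$.

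\textbf{Step 2 (the near-boundary graph cannot drop that fast).} This is the main obstacle. In the ODE \eqref{Xi} near $\vartheta=\pi/2$ the term $-m_2\tan\vartheta$ dominates, so the equation behaves like $\xi''\approx-(1+\xi'^2)\bigl(m\tanh\tfrac{2}{g}\xi-m_2\tan\vartheta\,\xi'\bigr)$. I would argue as in Lemma~\ref{lem:2025-09-17-1} and Lemma~\ref{lem:2025-10-02-3}, the mirror estimate at $\vartheta=\pi/2$. Rewrite the equation as $\frac{\xi''}{\xi'(1+\xi'^2)}=-\frac{m\tanh(2\xi/g)}{\xi'}-G(\vartheta)$ and integrate between the point where $|\xi'|=1$ and the point where $\xi'$ is small. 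This gives $|\xi'|\sim(\pi/2-\vartheta)^{m_2}$ up to bounded factors. The total drop of $\xi$ over a $\vartheta$-interval of length $\eta$ is then $O(\eta)$ plus the part where the graph is steep. A steep part (where $|\xi'|\ge1$) forces $\vartheta'$, and hence $\alpha$, near $\pi$. Using $\alpha'\approx 2(m_1\cos^2\vartheta-m_2\sin^2\vartheta)\cos\alpha$, this makes $\alpha$ increase past $\pi$. Then $\vartheta'=\sin2\vartheta\sin\alpha$ vanishes before $\xi_k$ reaches $0$. Concretely, I would show that for $\eta$ small any such trajectory reaches $\vartheta'=0$ while $\xi>\xi_\infty/4>0$. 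The key inputs for this are Lemma~\ref{lem:2025-10-03-1}(ii) and the sign $m\tanh\tfrac{2}{g}\xi>0$ at positive heights. The positive height pushes the curve to keep rising in $\xi$ rather than descend, which contradicts the graph falling by $\xi_\infty/2$.

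\textbf{Step 3 (conclusion).} Step 2 shows that for large $k$ the value $\delta_k$ is type 2, not type 1, since $\vartheta_k'$ vanishes before $\xi_k$ returns to $0$. This contradicts $\delta_k<\delta^*$ and the definition of $\delta^*$, so $\delta^*$ is not type 3. If Step 2 in the stated form proves too delicate, I would fall back on a weaker contradiction. Take the first zero of $\xi_k'$ after $t_k$ and apply Lemma~\ref{lem:2025-10-02-3} to the graphs with $a_k\to\pi/2$ and $\xi'(a_k)=-1$, exactly as in the penultimate paragraph of the proof of Lemma~\ref{lem:2025-10-02-2}. This requires only $\xi_k(\vartheta^*)$ bounded, and gives a sign contradiction with $\xi_k(a_k)$.
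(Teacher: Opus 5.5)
\textbf{There is a genuine gap in Step~2.} Your Steps~1 and~3 reproduce the paper's setup. Shadowing the type-3 solution puts the nearby type-1 solution at a height above $\xi_\infty/2$ inside the strip $\vartheta>\pi/2-\eta$, and from there it must come back down to $\xi=0$ with $\vartheta$ still increasing. But Step~2 is the whole content of the lemma, and you only sketch it; the sketch does not close.

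Integrating $\xi''/(\xi'(1+\xi'^2))$ in $\vartheta$, as in Lemma~\ref{lem:2025-09-17-1}, only controls $|\cos\alpha|=|\xi'|/\sqrt{1+\xi'^2}$, where $\xi'=\dd\xi/\dd\vartheta=\cot\alpha$. It gives no bound on $\int|\xi'|\,\dd\vartheta$ over the steep part of the graph, where $|\cos\alpha|$ is close to $1$ and $|\xi'|$ is unbounded. That steep part is exactly where the whole drop of $\xi$ can occur. You assert that $\alpha$ is pushed past $\pi$ there, but you never bound how far $\xi$ falls while this happens, and that bound is the point. Also, the $\tanh$ term does not make the curve ``keep rising in $\xi$''. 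On the descending arc, where $\alpha\in(\pi/2,\pi)$, it turns the curve further towards $\alpha=\pi$; its only role is that it has a favourable sign.

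\textbf{The missing estimate.} It is one line if you measure the turning of $\alpha$ against $\xi$ rather than $\vartheta$. From \eqref{eq:2025-10-02-1},
\[
\frac{\dd\alpha}{\dd\xi}=m\tanh\Bigl(\frac{2}{g}\xi\Bigr)\tan\alpha+G(\vartheta),\qquad G(\vartheta)=m_1\cot\vartheta-m_2\tan\vartheta .
\]
On the descending arc $\xi>0$, $\tan\alpha<0$ and $\vartheta>\pi/2-\eta$. Hence $\dd\alpha/\dd\xi<G(\pi/2-\eta)=:-M$, and $M$ can be made as large as you like by taking $\eta$ small. This is the paper's inequality $\vartheta_{\xi\xi}/(1+\vartheta_\xi^2)<-M$ from \eqref{eq:2025-07-26-1}. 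At the maximum of $\xi$ we have $\alpha=\pi/2$, and $\alpha\le\pi$ until $\xi$ returns to $0$. So the arc can descend by less than $\pi/(2M)$, which is below $\xi_\infty/2$ once $M>\pi/\xi_\infty$. This contradiction is precisely the paper's proof.

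\textbf{Your fallback does not work either.} Lemma~\ref{lem:2025-10-02-3} needs $|\xi(\vartheta^*)|$ small and $\xi'\le 0$ on all of $[\vartheta^*,a]$. Here $\xi_k(\vartheta^*)$ stays near a fixed positive value, and $\xi_k$ increases in $\vartheta$ up to its maximum.
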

	\begin{proof}

		For \(\varepsilon\in[0,\delta^*)\), we denote by \(\xi_\varepsilon(t),\vartheta_\varepsilon(t),\alpha_\varepsilon(t)\) the solution to \eqref{eq:2025-10-02-1} with initial conditions \((\xi,\vartheta,\alpha)(0)=(0,\delta^*-\varepsilon,0)\).
		Note that \(\delta^*-\varepsilon\) is type 1 by the definition of \(\delta^*\), which follows that there is a \(T_2(\varepsilon)>0\) such that \[\xi_\varepsilon(T_2)=0,\quad \xi_\varepsilon(t)>0\text{ and }\vartheta'(t)>0\text{ for }t\in(0,T_2).\]
		From Rolle's theorem and lemma~\ref{lem:2025-10-03-1}, it further implies that there is a unique \(T_1(\varepsilon)\in(0,T_2)\) such that \(\xi_\varepsilon'(T_1)=0\) and
		\[\xi_\varepsilon'(t)>0\text{ for }t\in(0,T_1),\quad\xi_\varepsilon'(t)<0\text{ for }t\in(T_1,T_2).\]
		Suppose, by contradiction, that \(\delta^*\) is type \(3\), which follows from lemma~\ref{lem:2025-10-03-3} that
		\(\lim_{t\to\infty}\vartheta_0(t)=\frac{\pi}{2}.\) and \(\xi_0'(t)>0\) for all \(t>0\).
		Let \(b\) be in the interval \((0,\lim_{t\to\infty}\xi_0(t))\) and \(M>\frac{\pi}{2b}\).
		Then there is a \(T>0\) such that \(\xi_0(T)>b\) and \(m_1 \cot \vartheta_0(T) - m_2 \tan \vartheta_0(T)<-M\).
		Using the continuous dependence of the solutions to \eqref{eq:2025-10-02-1} on the initial conditions, one has for \(\varepsilon>0\) small enough that
		\[\xi_\varepsilon(T)>b,\quad m_1 \cot \vartheta_\varepsilon(T) - m_2 \tan \vartheta_\varepsilon(T)<-M,\quad \xi'_\varepsilon(t)>0\text{ for }t\in[0,T].\]
		Hence we have for \(\varepsilon\) small enough \(T_1>T\), which follows that
		\begin{equation}\label{eq:2025-10-03-1}
			\xi_\varepsilon(T_1)>b,\quad m_1 \cot \vartheta_\varepsilon(t) - m_2 \tan \vartheta_\varepsilon(t)<-M\text{ for }t\in(T_1,T_2).
		\end{equation}
		Next we note that the trajectory \(\{(\xi_\varepsilon,\vartheta_\varepsilon)(t)\,|\,t\in(T_1,T_2)\}\) can be viewed as a graph of a strictly decreasing function \(\vartheta_\varepsilon(\xi)\) over \((0,\xi_\varepsilon(T_1))\), and the function \(\vartheta_\varepsilon(\xi)\) satisfies \eqref{eq:2025-07-26-1}.
		Together with \eqref{eq:2025-10-03-1} one sees that
		\[\frac{\frac{\dd^2 \vartheta_\varepsilon}{\dd \xi^2}}{1+\left( \frac{\dd \vartheta_\varepsilon}{\dd \xi} \right)^2} <-M.\]
		Integrating this from \(0\) to \(\xi_\varepsilon(T_1)\) gives
		\[\arctan\frac{\dd \vartheta_\varepsilon}{\dd \xi}(0)>M\, \xi_\varepsilon(T_1)-\frac{\pi}{2}>M\,b-\frac{\pi}{2}>0,\] which is impossible.
	\end{proof}

\end{document}